\newcommand{\nbb}{\mathbb{N}}
\newcommand{\rbb}{\mathbb{R}}
\newcommand{\W}{\mathcal{W}}
\renewcommand{\H}{\mathcal{H}}
\newcommand{\B}{\mathcal{B}}
\newcommand{\la}{\langle}
\newcommand{\ra}{\rangle}
\newcommand{\x}{\mathrm{x}}
\newcommand{\y}{\mathrm{y}}
\newcommand{\Xtil}{\widetilde{X}}
\newcommand{\uhat}{\widehat{u}}
\newcommand{\etahat}{\widehat{\eta}}
\newcommand{\ubar}{\overline{u}}
\newcommand{\etabar}{\overline{\eta}}
\newcommand{\utilde}{\widetilde{u}}
\newcommand{\etatilde}{\widetilde{\eta}}
\newcommand{\Phitilde}{\widetilde{\Phi}}
\newcommand{\wtilde}{\widetilde{w}}
\newcommand{\mi}{\wedge}
\newcommand{\Tr}{\text{Tr}}
\renewcommand{\d}{\text{d}}
\newcommand{\domain}{\mathcal{O}}
\newcommand{\f}{\varphi}
\newcommand{\grad}{\nabla}
\newcommand{\Dom}{\text{Dom}}
\newcommand{\Fcal}{\mathcal{F}}
\newcommand{\E}{\mathbb{E}}
\renewcommand{\P}{\mathbb{P}}
\newcommand{\Law}{\text{Law}}
\newcommand{\A}{\mathcal{A}}
\newcommand{\M}{\mathcal{M}}
\newcommand{\nbar}{\bar{n}}
\newcommand{\close}{\!\!\!}
\newcommand{\TV}{\text{TV}}
\theoremstyle{plain}
\newtheorem{theorem}{Theorem}[section]
\newtheorem{lemma}[theorem]{Lemma}
\newtheorem{assumption}[theorem]{Assumption}
\newtheorem{proposition}[theorem]{Proposition}
\newtheorem{definition}[theorem]{Definition}
\newtheorem{remark}[theorem]{Remark}
\numberwithin{equation}{section}
\title{Ergodicity of a nonlinear stochastic reaction-diffusion equation with memory}
\author{ Hung D.~Nguyen$^1$}
\address{$^1$ Department of Mathematics, University of California, Los Angeles, California, USA}
\begin{document}

\begin{abstract}
We consider a class of semi-linear differential Volterra equations with memory terms, polynomial nonlinearities and random perturbation. For a broad class of nonlinearities, we show that the system in concern admits a unique weak solution. Also, any statistically steady state must possess regularity compatible with that of the solution. Moreover, if sufficiently many directions are stochastically forced, we employ the \emph{generalized coupling} approach to prove that there exists a unique invariant probability measure and that the system is exponentially attractive. This extends ergodicity results previously established in [Bonaccorsi et al., SIAM J. Math. Anal., 44 (2012)]. 
\end{abstract}
\maketitle

\section{Introduction} \label{sec:intro}
\subsection{Overview} \label{sec:intro:overview}
Let $\domain\in \rbb^d$ be a bounded open domain with smooth boundary. We consider the following system in the unknown variable $u(t)=u(x,t):\domain\times\rbb\to\rbb$
\begin{equation} \label{eqn:react-diff:K:original}
\begin{aligned}
\d u(t)&=\triangle u(t)\d t -\int_{0}^{\infty}\close K(s)\triangle u(t-s)\d s\d t+\f(u(t))\d t+Q\d w(t),\\
u(t)\big|_{\partial\domain}&=0,\qquad u(s)=u_0(s),\,\,  s\le 0,
\end{aligned}
\end{equation}
modeling the temperature of a heat flow by conduction in viscoelastic materials \cite{ coleman1967equipresence,coleman1964material,gurtin1968general,nunziato1971heat}. In~\eqref{eqn:react-diff:K:original}, $K:[0,\infty)\to[0,\infty)$ is the heat flux memory kernel satisfying $\int_0^\infty K(s)\d s<1$, $\f$ is a nonlinear term with polynomial growth, $w(t)$ is a cylindrical Wiener process, and $Q$ is a linear bounded map on certain Hilbert spaces. For simplicity, we set all physical constants to 1.

In the absence of memory effects, that is when $K \equiv 0$, we note that~\eqref{eqn:react-diff:K:original} is reduced to the classical stochastic reaction-diffusion equation
\begin{equation} \label{eqn:react-diff}
\begin{aligned}
\d u(t)&=\triangle u(t)\d t +\f(u(t))\d t+Q\d w(t),\\
u(t)\big|_{\partial\domain}&=0,\qquad u(0)=u_0.
\end{aligned}
\end{equation}
Under different assumptions on $\f$ as well as the noise term, statistically steady states of~\eqref{eqn:react-diff} are well studied. Using a Krylov-Bogoliubov argument together with tightness,~\eqref{eqn:react-diff} always admits at least one invariant probability measure \cite{da2014stochastic}. For both Lipschitz and dissipative assumptions on $\f$, it is a classical result that such an invariant measure is unique for~\eqref{eqn:react-diff} \cite{cerrai2001second,da1996ergodicity,da2014stochastic}. Under more general conditions on $\f$, provided that noise is sufficiently forced in many directions of the phase space,~\eqref{eqn:react-diff} satisfies the so-called \emph{asymptotic strong Feller} property and hence exponentially attractive toward equilibrium \cite{hairer2011theory}. 

On the other hand, in the presence of memory kernels without random perturbation, i.e., $Q\equiv 0$, the well-posedness of the following equation
\begin{equation} \label{eqn:react-diff:K:deterministic}
\begin{aligned}
\d u(t)&=\triangle u(t)\d t -\int_{0}^{\infty}\close K(s)\triangle u(t-s)\d s\d t+\f(u(t))\d t,\\
u(t)\big|_{\partial\domain}&=0,\qquad u(s)=u_0(s),\,\,  s\le 0,
\end{aligned}
\end{equation}
was studied as early as in the work of \cite{
miller1974linear}. Moreover, positivity of the solutions for~\eqref{eqn:react-diff:K:deterministic} as well as large-time asymptotic results were established in \cite{clement1981asymptotic2,clement1981asymptotic}. The topic of global attractors was explored for a variety of deterministic systems related to~\eqref{eqn:react-diff:K:deterministic} \cite{chekroun2010asymptotics,chekroun2012invariant,
conti2005singular,conti2006singular,pata2001attractors}. Equation~\eqref{eqn:react-diff:K:deterministic} was also the motivation for many works of differential Volterra equations in the context of memory \cite{barbu1975nonlinear,barbu52nonlinear,barbu1979existence,barbu2010nonlinear}.

Although there is a rich literature on~\eqref{eqn:react-diff} and \eqref{eqn:react-diff:K:deterministic}, much less is known about asymptotic behaviors of the stochastic equation~\eqref{eqn:react-diff:K:original}. To the best of the author's knowledge, first result in this direction seems to be established in the work of~\cite{bonaccorsi2012asymptotic}. For a wide class of nonlinearities, mild solutions of~\eqref{eqn:react-diff:K:original} was constructed \cite[Section 4.2]{bonaccorsi2012asymptotic} via the classical Yosida approximation. Moreover, under the assumption of either $\f\equiv0$ or dissipative nonlinearity, that is $\f'\le 0$, it was shown that~\eqref{eqn:react-diff:K:original} admits a unique invariant probability measure and that the system is mixing \cite[Theorem 5.1]{bonaccorsi2012asymptotic}. Similarly method was also employed in \cite{caraballo2007existence,caraballo2008pullback} to prove the existence of random attractors. The goal of this note is to make further progress toward statistically steady states of~\eqref{eqn:react-diff:K:original} under more general conditions on $\f$ and the stochastic forcing.

In general, due to the memory effect, $u(t)$ as in~\eqref{eqn:react-diff:K:original} is not a Markov process. It is thus convenient to transform~\eqref{eqn:react-diff:K:original} to a Cauchy problem on suitable spaces accounting for the whole past information. To see this, following the framework in~\cite{bonaccorsi2012asymptotic,miller1974linear}, we introduce the auxiliary \emph{memory} variable $\eta$ given by
\begin{align*}
\eta(t;s)= u(t-s),
\end{align*}
and observe that formally $\eta$ satisfies the following transport equation
\begin{align} \label{eqn:eta:original}
\partial_t\eta=-\partial_s\eta,\quad \eta(t;0)=u(t).
\end{align}
So that the above equation together with~\eqref{eqn:react-diff:K:original} forms a Markovian dynamics in the pair $(u(t),\eta(t))$ (see~\eqref{eqn:react-diff:K} below). It is thus necessary to construct memory spaces in which the dynamic $\eta(t)$ evolves over time $t$ (see Section~\ref{sec:functional-setting} below). The method of augmenting~\eqref{eqn:react-diff:K:original} by memory variables and spaces was employed as early as in the work of~\cite{miller1974linear} and later popularized in~\cite{bonaccorsi2012asymptotic}. One of the main difficulties dealing with these memory spaces, however, is the lack of compact embeddings that are typically found in classical Sobolev spaces. Therefore, when it comes to studying large-time asymptotics, it is a challenging problem to prove the existence of invariant measures via the Krylov-Bogoliubov procedure. To circumvent the issue, in~\cite{bonaccorsi2012asymptotic}, based on the assumption that $\f'\le 0$, it was shown that the invariant measure uniquely exists. Because of such dissipative condition, however, the method employed therein does not cover more general potential instances, e.g., the Allen-Cahn function $\f(x)=x-x^3$. On the other hand, in the absence of memory, it is well-known that one can employ the so-called \emph{generalized coupling} argument~\cite{butkovsky2020generalized,hairer2011theory,
kulik2015generalized,kulik2017ergodic} to study invariant structures of~\eqref{eqn:react-diff} with the Allen-Cahn function. One of our goal here therefore is to generalize the ergodic results previously established in~\cite{bonaccorsi2012asymptotic} making use of the framework from~\cite{butkovsky2020generalized,hairer2011theory,
kulik2015generalized,kulik2017ergodic} applied to our settings in the context of memory. 

Unlike the mild solution approach of~\cite{bonaccorsi2012asymptotic,caraballo2007existence,
caraballo2008pullback}, we will study the well-posedness of~\eqref{eqn:react-diff:K:original} in a weak formulation. More specifically, under a general condition on the nonlinearities, we will construct the solution via a Galerkin approximation. As a byproduct of doing so, we are able to obtain useful moment bounds on the solutions. In turn, this will establish our first main result about the regularity of statistically steady states that is compatible to that of the weak solutions, cf. Theorem~\ref{thm:mu:moment-bound}. In the second main result concerning unique ergodicity, under a slightly stronger assumption on the random perturbation, namely, noise is sufficiently forced in many Fourier directions, we employ the approach of \emph{generalized coupling} developed in~\cite{butkovsky2020generalized,hairer2011theory} to prove the existence and uniqueness of the invariant probability measure $\mu$ to which the system \eqref{eqn:react-diff:K:original} is exponentially attracted in suitable Wasserstein distances, cf. Theorem~\ref{thm:ergodicity}.

\newcommand{\ebold}{\mathrm{e}}
\newcommand{\qbold}{\mathrm{q}}

\subsection{Physical motivation}\label{sec:intro:motivation}

Equation~\eqref{eqn:react-diff:K:original} is derived from the heat conduction in a viscoelastic material such as polymer \cite{coleman1964material,nunziato1971heat}. More specifically, by Fick's law of balance of heat, it holds that \cite{bonaccorsi2012asymptotic}
\begin{align*}
\partial_t\, \ebold +\text{div}\, \qbold =r,
\end{align*}
where $\ebold$ represents the internal energy, $r$ is an external heat supply and $\qbold$ is the heat flux of the form
\begin{align*}
\qbold(t,x) = -\int_0^\infty  \close \d a(s)\grad u(t-s,x).
\end{align*}
In Fourier conductors, $a\equiv k_0$, where $k_0$ is the instantaneous conductivity constant. So that, $\qbold$ is reduced to
\begin{align*}
\qbold(t,x) = -k_0\grad u (t,x),
\end{align*}
which yields the classical nonlinear heat equation~\eqref{eqn:react-diff} (by setting $k_0=1$). This amounts to the assumption that there is no time correlation between the heat transfer and the surrounding medium. However, as pointed out elsewhere~ \cite{coleman1964material,coleman1967equipresence,
gurtin1968general}, the above type of $\qbold$ does not account for memory effects in viscoelastic materials, particularly at low temperature \cite{nunziato1971heat}. It is thus more appropriate to consider $a(t)$ given by
\begin{align*}
a(t)=k_0\pm\int_0^t k(s)\d s,
\end{align*}
where $k\in[0,\infty)\to(0,\infty)$ is the memory kernel that is decreasing on $[0,\infty)$. With regard to the sign of the memory integral, there have been many works considering $a(t)=k_0+\int_0^t k(s)\d s$, as $\int_0^t k(s)\d s$ representing the thermal drag in addition to instantaneous conductivity. See \cite{caraballo2007existence,caraballo2008pullback,
coleman1964material,conti2005singular,coleman1967equipresence,
conti2006singular,gurtin1968general,nunziato1971heat} and the references therein. On the other hand,~\cite{bonaccorsi2012asymptotic,
clement1981asymptotic2,clement1981asymptotic} as well as this note study $a(t)=k_0-\int_0^t k(s)\d s$, which yields~\eqref{eqn:react-diff:K:original} by setting $k_0=1$. It is worth to mention that for physical reasons, one should require \cite{clement1981asymptotic2}
\begin{align*}
k_0=1>\int_0^\infty\close k(s)\d s.
\end{align*}
Later in Sections~\ref{sec:moment-bound:solution}-\ref{sec:ergodicity}, we will see that the above condition is actually critical in the analysis of~\eqref{eqn:react-diff:K:original}. 


Finally, we remark that in literature, stochastic equations with memory was studied as early as in the seminal work of~\cite{ito1964stationary} for finite-dimensional settings. The theory of stationary solutions was then developed further in \cite{bakhtin2005stationary} and was employed to study a variety of finite-dimensional and infinite-dimensional settings, e.g., Navier-Stokes equation \cite{weinan2001gibbsian}, Ginzburg-Landau equation \cite{weinan2002gibbsian} and Langevin equation \cite{herzog2021gibbsian}. In particular, heat equation with memory was studied in a series of work in \cite{barbu1975nonlinear,barbu52nonlinear,barbu1979existence,
barbu2010nonlinear}

The rest of the paper is organized as follows: in Section~\ref{sec:functional-setting}, we introduce all the functional settings needed for the analysis. In particular, we will see that~\eqref{eqn:react-diff:K:original} induces a Markovian dynamics as an abstract Cauchy equation evolving on an appropriate product space. In Section~\ref{sec:result}, we introduce the main assumptions on the non-linearities and noise structure. We also state our main results in this section, including Theorem~\ref{thm:mu:moment-bound} on the regularity of invariant measures and Theorem~\ref{thm:ergodicity} on geometric ergodicity. In Section~\ref{sec:moment-bound:solution}, we collect a priori moment bounds on the solutions that will be employed to prove the main results. In Section~\ref{sec:moment-bound:mu}, we establish regularity of an invariant measure. We then discuss the coupling approach and prove geometric ergodicity in Section~\ref{sec:ergodicity}. In Appendix~\ref{sec:well-posed}, we employ Galerkin approximation to construct the solutions of~\eqref{eqn:react-diff:K:original}.

\section{Functional setting} \label{sec:functional-setting}
Letting $\domain$ be a smooth bounded domain in $\rbb^d$, we denote by $H$ the Hilbert space $L^2(\domain)$ endowed with the inner product $\la\cdot,\cdot\ra_H$ and the induced norm $\|\cdot\|_H$.

 Let $A$ be the realization of $-\triangle$ in $H$ endowed with the Dirichlet boundary condition and the domain $\Dom(A)=H^1_0(\domain)\cap H^2(\domain)$. It is well-known that there exists an orthonormal basis $\{e_k\}_{k\ge 1}$ in $H$ that diagonalizes $A$, i.e.,  
\begin{equation}\label{eqn:Ae_k=alpha_k.e_k}
Ae_k=\alpha_k e_k,
\end{equation}
for a sequence of positive numbers $\alpha_1<\alpha_2<\dots$ diverging to infinity. 

For each $r\in\rbb$, we denote
\begin{equation}
H^r=\Dom(A^{r/2}),
\end{equation}
endowed with the inner product
\begin{align*}
\la u_1,u_2\ra_{H^r}=\la A^{r/2}u_1,A^{r/2}u_2\ra_H.
\end{align*}
In view of~\eqref{eqn:Ae_k=alpha_k.e_k}, the inner product in $H^r$ may be rewritten as \cite{cerrai2020convergence,conti2005singular,conti2006singular}
\begin{align*}
\la u_1,u_2\ra_{H^r}=\sum_{k\ge 1}\alpha_k^{r}\la u_1,e_k\ra_H\la u_2,e_k\ra_H.
\end{align*}
The induced norm in $H^r$ then is given by
\begin{align*}
\|u\|^2_{H^r}=\sum_{k\ge 1}\alpha_k^{r}|\la u_1,e_k\ra_H|^2.
\end{align*}
It is well-known that the embedding $H^{r_1}\subset H^{r_2}$ is compact for $r_1>r_2$. For $n\ge 1$, we denote by $P_n$ the projection onto the first $n$ wavenumbers $\{e_1,\dots,e_n\}$, i.e.,
\begin{equation} \label{form:P_n.u}
P_nu=\sum_{k=1}^n \la u,e_k\ra_He_k.
\end{equation}
The above projection will be useful in Sections~\ref{sec:moment-bound:mu} and \ref{sec:ergodicity} when we study the asymptotic behavior of~\eqref{eqn:react-diff:K:original}.

In order to treat the memory term of~\eqref{eqn:react-diff:K:original} on an extended phase space, following the framework in~\cite{bonaccorsi2012asymptotic}, we introduce the function $\rho(t)$ given by
\begin{equation} \label{form:rho}
\rho(t) =\int_t^\infty\close K(s)\d s,
\end{equation}
and the weighted space
\begin{equation}\label{form:M}
\M^r:=L^2_\rho(\rbb^+;H^{r+1}),\quad r\in\rbb,
\end{equation}
endowed with the inner product
\begin{equation} \label{form:M:norm}
\la\eta_1,\eta_2\ra_{\M^r}=\int_0^\infty\close \rho(s)\la \eta_1(s),\eta_2(s)\ra_{H^{r+1}}\d s.
\end{equation} 
As mentioned in Section~\ref{sec:intro:overview}, unlike the compact embedding $H^{r_1}\subset H^{r_2}$, $r_1>r_2$, the embedding $\M^{r_1}\subset \M^{r_2}$ is only continuous \cite{conti2005singular,conti2006singular}. 

Next, we define $\H^r$ to be the product space given by
\begin{equation} \label{form:HxM}
\H^r=H^r\times\M^r,\quad r\in\rbb,
\end{equation}
endowed with the norm
\begin{equation*}
\|\x\|^2_{\H^r} = \|u\|^2_{H^r}+\|\eta\|^2_{\M^r}.
\end{equation*}
To simplify notation, we shall use $\M$ and $\H$ instead of $\M^0$ and $\H^0=H\times\M^0$, respectively. For $\x=(u,\eta)\in\H^r$, the projections of $\x$ onto the marginal spaces are given by
\begin{equation}
\pi_1\x=u\in H^r\quad\text{and}\quad\pi_2\x=\eta\in \M^r.
\end{equation}
On the Hilbert space $\H$, we consider the operator $\A$ defined for $\x=(u,\eta)$
\begin{equation} \label{form:A}
\A\x=\begin{pmatrix}
-Au+\int_0^\infty K(s)A\eta(s)\d s\\
-\partial_s
\end{pmatrix},
\end{equation}
with the domain \cite[Section 2.3]{bonaccorsi2012asymptotic}
\begin{align*}
\Dom(\A) = \{\x=(u,\eta)\in\H| u\in H^2,\eta\in W^{1,2}_\rho(\rbb^+;H^2), \eta(0)=u  \}.
\end{align*}
It can be shown that $\A$ generates a strong continuous semigroup of contractions $S(t)$ in $\H$ \cite[Theorem 2.5]{bonaccorsi2012asymptotic}. Concerning the operator $\partial_s$,
by the choice of $\rho$ as in~\eqref{form:rho}, observe that
\begin{align}\label{eqn:<partial_s.eta,eta>}
\la -\partial_s\eta,\eta\ra_\M& =-\int_0^\infty\close \rho(s)\tfrac{1}{2}\partial_s\|\eta(s)\|^2_{H^1_0}\d s
=\tfrac{1}{2}\rho(0)\|\eta(0)\|^2_{H^1}+\tfrac{1}{2}\int_0^\infty\close\rho'(s)\|\eta(s)\|^2_{H^1}\d s . 
\end{align}
We will make use of the above identity later in the analysis of~\eqref{eqn:react-diff:K:original}. Furthermore, given $u\in L^2(0,T;H^1)$, the following transport equation
\begin{equation}\label{eqn:partial_s}
\tfrac{\d}{\d t}\eta(t;\cdot)=-\partial_s\eta(t;\cdot),\quad \eta(0;\cdot)=\eta_0\in \M,\quad \eta(t;0)=u(t),
\end{equation}
admits a unique solution given by (see \cite[Expression (2.17)]{bonaccorsi2012asymptotic} and \cite[Proposition 1.2]{pruss2013evolutionary})
\begin{equation} \label{form:eta}
\eta(t;s)=u(t-s)\boldsymbol{1}\{s\le t\}+\eta_0(s-t)\boldsymbol{1}\{s> t\}.
\end{equation}
Also, for $\etatilde\in \M$ such that $\partial\etatilde\in \M$, by integration by parts, we note that \cite{bonaccorsi2012asymptotic}
\begin{align}
&\tfrac{\d}{\d t}\la \eta(t),\etatilde\ra_\M =\la -\partial_s\eta(t),\etatilde\ra_\M\nonumber\\
&=-\rho(s)\la \eta(t;s),\etatilde(s)\ra_{H^1}\big|^\infty_0+\int_0^\infty\close \la \eta(t;s),\partial_s\big(\rho(s)\etatilde(s)\big)\ra_{H^1}\d s\nonumber\\
&=\rho(0)\la u(t),\etatilde(0)\ra_{H^1} +\int_0^\infty\close\rho'(s)\la \eta(t;s),\etatilde(s)\ra_{H^1}\d s+ \int_0^\infty\close\rho(s)\la \eta(t;s),\partial_s\etatilde(s)\ra_{H^1}\d s .
\label{eqn:eta:int-by-parts}
\end{align}

Having introduced suitable spaces, we may transform~\eqref{eqn:react-diff:K:original} to the following equation
\begin{equation} \label{eqn:react-diff:K}
\begin{aligned}
\d\begin{pmatrix}
u(t)\\\eta(t)
\end{pmatrix}=\A\begin{pmatrix}
u(t)\\\eta(t)
\end{pmatrix}\d t+\begin{pmatrix}
\f(u(t))\\0
\end{pmatrix}\d t+\begin{pmatrix}
Q \\ 0
\end{pmatrix}\d w(t),
\end{aligned}
\end{equation}
where $\A$ is as in~\eqref{form:A} together with the conditions
\begin{align*}
u(0)=u_0\in H,\eta(0)=\eta_0\in \M,\, \eta(t;0)=u(t),\, t>0.
\end{align*}

\section{Main results}  \label{sec:result}

\subsection{Well-posedness} \label{sec:result:subsection}
We begin this section by stating the following condition on the kernel $K$:
\begin{assumption} \label{cond:K} The kernel $K(t):[0,\infty)\to (0,\infty)$ satisfies
\begin{equation} \label{cond:K:1}
\int_0^\infty\close K(s)\emph{d} s < 1,
\end{equation}
and
\begin{align} \label{cond:K:2}
K'(s)+\delta K(s)\le 0, \quad s> 0,
\end{align}
for some constant $\delta>0$. Furthermore,
\begin{align} \label{cond:K:3}
\sup_{s\ge 0}\frac{|K'(s)|}{K(s)}<\infty.
\end{align}
\end{assumption}
\begin{remark}\label{rem:K}
We note that conditions~\eqref{cond:K:1}-\eqref{cond:K:2} are quite standard and can be found in literature \cite{bonaccorsi2012asymptotic,clement1997white,nunziato1971heat,
pruss2013evolutionary}. Condition~\eqref{cond:K:3} implies that 
\begin{align*}
-K'(t)\le c\,K(t),\quad
 t\ge 0,
\end{align*}
for some positive constant $c$. Taking integral on $[t,\infty)$ yields
\begin{align}\label{ineq:K/rho<c}
K(s)\le c\int_s^\infty\close K(r)\emph{d} r= c\rho(s).
\end{align}
As a consequence, $ \M^r\subseteq L^2_K(\rbb^+;H^{r+1})$, i.e., if $\eta\in\M^r$,~\eqref{ineq:K/rho<c} implies
\begin{align*}
\int_0^\infty \close K(s)\|\eta(s)\|^2_{H^{r+1}}\emph{d} s\le c\|\eta\|^2_{\M^r}.
\end{align*}
\end{remark}

Concerning the noise term, we assume that $w(t)$ is a cylindrical Wiener process of the form
\begin{align*}
w(t)=\sum_{k\ge 1} e_kB_k(t),
\end{align*}
where $\{e_k\}_{k\ge 1}$ are as in~\eqref{eqn:Ae_k=alpha_k.e_k} and $\{B_k(t)\}_{k\ge 1}$ is a sequence of mutually independent Brownian motions, all defined on the same stochastic basis $(\Omega,\Fcal,\{\Fcal_t\}_{t\ge 0},\P)$. With regard to the operator $Q$, we impose the following condition: \cite{cerrai2020convergence,glatt2017unique}
\begin{assumption}\label{cond:Q:well-posed}
The operator $Q:H\to H$ is diagonalized by $\{e_k\}_{k\ge 1}$, i.e., there exists a sequence $\{\lambda_k\}_{k\ge 1}$ such that
\begin{align} \label{cond:Q:1}
Qe_k=\lambda_k e_k.
\end{align}
Furthermore, 
\begin{equation} \label{cond:Q:2}
\emph{Tr}(AQQ^*)=\sum_{k\ge 1}\lambda_k^2\alpha_k<\infty.
\end{equation}
\end{assumption}

Concerning the nonlinearity, we will assume the following  standard conditions: \cite{caraballo2007existence,caraballo2008pullback,
cerrai2020convergence}

\begin{assumption} \label{cond:phi:well-possed}
The function $\f:\rbb\to\rbb$ is $C^1$ satisfying $\f(0)=0$. Moreover, the followings hold:

1. There exist $p>0,\, a_1>0$ such that
\begin{equation} \label{cond:phi:1}
 |\f(x)| \le a_1(1+|x|^p), \quad x\in\rbb.
\end{equation}

2. There exist $a_2,\,a_3>0$ such that
\begin{equation} \label{cond:phi:2}
x\f(x)\le -a_2|x|^{p+1}+a_3, \quad x\in\rbb,
\end{equation}
where $p$ is the same constant as in~\eqref{cond:phi:1}.

3. There exists $a_\f>0$ such that
\begin{equation}\label{cond:phi:3}
\sup_{x\in\rbb}\f'(x)=: a_\f<\infty.
\end{equation}
\end{assumption}

A concrete example of $\f$ is the class of odd--degree polynomials with negative leading coefficients, i.e., 
$$\f(x)=-c_{2n+1}x^{2n+1}+c_{2n}x^{2n}+\dots+c_0,$$
for some constants $n\ge 1$, $c_{2n+1}>0$ and $c_{2n},\dots,c_0\in\rbb$ are constants.

In light of relation~\eqref{eqn:eta:int-by-parts} together with Assumptions~\ref{cond:K}, \ref{cond:Q:well-posed} and~\ref{cond:phi:well-possed}, we are now in a position to define weak solutions for~\eqref{eqn:react-diff:K}.
\begin{definition}\label{def:well-posed}
For $\x\in\H$, a process $\Phi_\x(t)=(u_\x(t),\eta_\x(t))$ is called a weak solution for~\eqref{eqn:react-diff:K} if 
\begin{align*}
u_\x\in C(0,T;H)\cap L^2(0,T;H^1), \quad \f(u_\x)\in L^{q}(0,T;L^q),\quad \eta_\x\in C(0,T;\M),
\end{align*}
where $q$ is the H\"older conjugate of $p+1$ as in Assumption~\ref{cond:phi:well-possed}. Moreover, for all $t\in[0,T]$, $v\in H^1\cap L^{p+1}$ and $\etatilde\in \M$ such that $\partial_s\etatilde\in \M$
\begin{equation}\label{eqn:weak-solution:u}
\begin{aligned} 
\la u_\x(t),v\ra_{H}&=\la \pi_1\x,v\ra_H-\int_0^t\la A^{1/2}u_\x(r),v\ra_{H^1}\emph{d} r+\int_0^t \int_0^\infty\close K(s)\la A^{1/2} \eta_\x(r;s),v\ra_{H^1}\emph{d} s \emph{d} r\\
&\qquad\qquad+\int_0^t \la \f(u_\x(r)),v\ra_H\emph{d} r+\int_0^t \la v,Q\emph{d} w(r)\ra_H,
\end{aligned}
\end{equation}
\begin{equation}\label{eqn:weak-solution:eta:1}
\begin{aligned}
\la \eta_\x(t),\etatilde\ra_\M &=\la \pi_2\x,\etatilde\ra_\M+\int_0^t\rho(0)\la u_\x(r),\etatilde(0)\ra_{H^1}\emph{d} r + \int_0^t\la \eta_\x(r),\partial_s\etatilde\ra_{\M}\emph{d} r\\
&\qquad\qquad +\int_0^t\int_0^\infty\close\rho'(s)\la \eta_\x(r;s),\etatilde(s)\ra_{H^1}\emph{d} s\emph{d} r.
\end{aligned}
\end{equation}
\end{definition}

We now state the following proposition giving the existence of a solution for~\eqref{eqn:react-diff:K}.
\begin{proposition}\label{prop:well-posed}
Suppose that Assumptions~\ref{cond:K}, \ref{cond:Q:well-posed} and \ref{cond:phi:well-possed} hold. Then,~\eqref{eqn:react-diff:K} admits a unique solution in the sense of Definition~\ref{def:well-posed}. In particular,
\begin{align} \label{eqn:weak-solution:eta}
\eta(t;s)=u(t-s)\boldsymbol{1}\{0\le s\le t\}+\eta_0(s-t)\boldsymbol{1}\{s> t\}.
\end{align}
\end{proposition}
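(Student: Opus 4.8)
The plan is to construct the solution by a Galerkin scheme carried out only in the $u$-variable, using the explicit representation \eqref{form:eta} to slave the memory variable $\eta$ to $u$; this is precisely what lets us bypass the absence of compact embeddings in the scale $\M^r$. Substituting \eqref{form:eta} into \eqref{eqn:react-diff:K} rewrites the system as the single stochastic integro-differential equation $\d u(t)=-Au(t)\,\d t+\int_0^t K(s)Au(t-s)\,\d s\,\d t+f_0(t)\,\d t+\f(u(t))\,\d t+Q\,\d w(t)$, where $f_0(t)=\int_t^\infty K(s)A\eta_0(s-t)\,\d s$ is a fixed forcing; using that $A$ maps $H^1$ isometrically onto $H^{-1}$, the Cauchy--Schwarz inequality with weight $K$, and $\int_0^T K(\sigma+t)\,\d t\le\rho(\sigma)$, one checks $\|f_0\|_{L^2(0,T;H^{-1})}\le\sqrt{\rho(0)}\,\|\eta_0\|_\M<\infty$.

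For each $n\ge1$ we project this equation onto $P_n$, obtaining a finite-dimensional SDE for $u^n$ with a time-memory term, and set $\eta^n$ to be given by \eqref{form:eta} with $u,\eta_0$ replaced by $u^n,P_n\eta_0$; local existence of $u^n$ is standard since $\f\in C^1$ is locally Lipschitz. Applying It\^o's formula to $\|u^n(t)\|_H^2$, the leading terms are the dissipation $-2\|u^n\|_{H^1}^2$, the memory cross term $2\int_0^\infty K(s)\la u^n,\eta^n(s)\ra_{H^1}\,\d s$, the nonlinear term $2\la\f(u^n),u^n\ra_H$, and $\mathrm{Tr}(P_nQQ^*P_n)$. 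Bounding the cross term by Young's inequality as $\rho(0)\|u^n\|_{H^1}^2+\int_0^\infty K(s)\|\eta^n(s)\|_{H^1}^2\,\d s$ and invoking $\rho(0)=\int_0^\infty K(s)\,\d s<1$ (this is where condition \eqref{cond:K:1} enters decisively, turning $-2\|u^n\|_{H^1}^2$ plus the cross term into $-(2-\rho(0))\|u^n\|_{H^1}^2$), together with $\int_0^\infty K(s)\|\eta^n(s)\|_{H^1}^2\d s\le c\|\eta^n\|_\M^2$ from Remark~\ref{rem:K}, the dissipativity \eqref{cond:phi:2} for the nonlinearity, $\mathrm{Tr}(AQQ^*)<\infty$ for the trace term, and the elementary estimate $\|\eta^n(t)\|_\M^2\le\rho(0)\int_0^t\|u^n(r)\|_{H^1}^2\,\d r+\|\eta_0\|_\M^2$ read off from \eqref{form:eta}, a Gr\"onwall argument followed by Burkholder--Davis--Gundy yields bounds, uniform in $n$, for $u^n$ in $L^2\big(\Omega;L^\infty(0,T;H)\big)\cap L^2\big(\Omega;L^2(0,T;H^1)\big)\cap L^{p+1}\big(\Omega\times(0,T)\times\domain\big)$, for $\eta^n$ in $L^2\big(\Omega;L^\infty(0,T;\M)\big)$, and --- via \eqref{cond:phi:1} with $pq=p+1$ --- for $\f(u^n)$ in $L^q\big(\Omega\times(0,T)\times\domain\big)$; the equation then also bounds $\partial_t u^n$ in $L^2(0,T;H^{-1})+L^q(0,T;H^{-\beta})$ modulo the stochastic integral.

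To pass to the limit, extract subsequences converging weakly-$*$ in the spaces above. Since $\eta^n$ is an explicit bounded linear functional of $u^n$ through \eqref{form:eta}, it converges correspondingly, and its limit is the $\eta$ given by \eqref{eqn:weak-solution:eta}, which then satisfies \eqref{eqn:weak-solution:eta:1} by the computation \eqref{eqn:eta:int-by-parts}; in particular no compactness in $\M$ is needed. All the compactness is therefore required only in the classical scale $\{H^r\}$: using the compact embedding $H^1\subset H$, the $\partial_t$ bound, and the uniform time-regularity of the stochastic convolution (via stochastic compactness and the Skorokhod representation, or equivalently by subtracting the stochastic convolution and arguing pathwise with the Aubin--Lions--Simon lemma), one obtains strong convergence of $u^n$ to some $u$ in $L^2(0,T;H)$, hence a.e. on $(0,T)\times\domain$ along a further subsequence. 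This identifies the limit of every linear term in \eqref{eqn:weak-solution:u}, and for the nonlinear term the a.e. convergence of $u^n$ with continuity of $\f$ gives $\f(u^n)\to\f(u)$ a.e., which combined with the uniform $L^q$ bound and Vitali's theorem (equivalently, a Minty monotonicity argument using that $x\mapsto\f(x)-a_\f x$ is non-increasing by \eqref{cond:phi:3}) shows the weak limit of $\f(u^n)$ is $\f(u)$. Finally $u\in C(0,T;H)$ follows from a stochastic It\^o/Lions--Magenes argument and $\eta\in C(0,T;\M)$ from \eqref{form:eta} and the contraction semigroup $S(t)$ exactly as in \cite{bonaccorsi2012asymptotic}, so $\Phi_\x=(u,\eta)$ is a weak solution in the sense of Definition~\ref{def:well-posed}.

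For uniqueness, let $\Phi_\x^{(1)},\Phi_\x^{(2)}$ be two weak solutions with the same data and driving noise and set $(w,\zeta)=\Phi_\x^{(1)}-\Phi_\x^{(2)}$, which solves a random PDE with no It\^o term. Testing the $w$-equation against $w$ in $H$ and using the energy identity \eqref{eqn:<partial_s.eta,eta>} for $\tfrac{\d}{\d t}\|\zeta\|_\M^2$, the memory cross term combined with $\rho(0)<1$ again produces a non-positive linear contribution of order $-2(1-\rho(0))\|w\|_{H^1}^2$, while $\la\f(u^{(1)})-\f(u^{(2)}),w\ra_H\le a_\f\|w\|_H^2$ by \eqref{cond:phi:3}; Gr\"onwall forces $w\equiv0$ and $\zeta\equiv0$. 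The step I expect to be the main obstacle is the passage to the limit in the nonlinear term: one must be sure the memory forcing does not destroy the compactness needed to upgrade weak to a.e. convergence of $u^n$ --- this is exactly the point where the missing compact embedding of $\M$ would bite --- and the resolution is the slaving of $\eta^n$ to $u^n$ via \eqref{form:eta}, together with the usual care (subtracting the stochastic convolution, or Skorokhod) to keep the stochastic term from spoiling time-compactness.
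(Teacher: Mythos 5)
Your proposal is correct and follows the same broad strategy as the paper: Galerkin approximation in the Fourier modes, exploitation of the explicit transport representation $\eta^n(t;s)=u^n(t-s)\boldsymbol{1}\{s\le t\}+P_n\eta_0(s-t)\boldsymbol{1}\{s>t\}$ so that no compactness in $\M$ is ever needed, the same energy estimate producing the dissipative constant $1-\|K\|_{L^1(\rbb^+)}$, and the same weak-$*$ limits. The point of genuine divergence is in identifying the weak limit $\chi$ of $\f(u^n)$. You propose as the primary route to upgrade weak to strong convergence of $u^n$ via compactness --- the Aubin--Lions--Simon lemma applied pathwise after subtracting the stochastic convolution, or stochastic compactness together with the Skorokhod representation theorem --- and only mention the monotonicity ($\f-a_\f\cdot\mathrm{id}$ non-increasing) route as an aside. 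The paper does exactly the opposite: it never invokes Aubin--Lions or Skorokhod, but instead runs a Lions-type monotonicity argument, estimating $\E\|(P_n u-u^n,\,P_n\eta-\eta^n)\|^2_\H$ directly, using the one-sided Lipschitz bound $\la\f(u)-\f(u^n),u-u^n\ra_H\le a_\f\|u-u^n\|^2_H$ from \eqref{cond:phi:3}, and sending the cross terms to zero by the already-established weak convergences. This delivers $u^n\to u$ a.s.\ in $C(0,T;H)$ on the \emph{original} probability space, from which a.e.\ convergence and identification of $\chi=\f(u)$ follow by continuity of $\f$ together with the uniform $L^q$ bound (Robinson, Lemma~8.3), exactly as you describe in your last step. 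What the monotonicity route buys is avoiding the Skorokhod change of stochastic basis and the extra care needed to control the time-regularity of the $n$-dependent stochastic integral; what your compactness route buys is conceptual familiarity and independence from the sign structure of $\f'$. Both are sound here, but the paper's choice is the cleaner one given that the semi-monotonicity of $\f$ is already available from Assumption~\ref{cond:phi:well-possed}. Your uniqueness argument agrees with what the monotonicity estimate yields as a byproduct, and your preliminary bound $\|f_0\|_{L^2(0,T;H^{-1})}\le\sqrt{\rho(0)}\,\|\eta_0\|_\M$, though not written in the paper, is a correct and useful observation.
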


We remark that in~\cite{bonaccorsi2012asymptotic}, under a stronger assumption on the nonlinearities, the authors studied the notion of mild solutions for~\eqref{eqn:react-diff:K}. The existence of such solutions was established using a classical Yosida approximation for SPDEs that can be found in literature~\cite[Section 4.2]{bonaccorsi2012asymptotic} (see also \cite{cerrai2001second,da1996ergodicity,da2014stochastic}). Similar method was also employed in the work of~\cite{caraballo2007existence,caraballo2008pullback}. On the other hand, in this note, we will construct the weak solutions for~\eqref{eqn:react-diff:K} via a Galerkin approximation, following the framework in~\cite{glatt2008stochastic,robinson2001infinite}. The explicit construction will be presented later in Appendix~\ref{sec:well-posed}.

\subsection{Geometric ergodicity} We now turn to the main topic of the paper concerning statistically steady states of~\eqref{eqn:react-diff:K}. 

Given the well-posedness result in the previous subsection, we can thus introduce the Markov transition probabilities of the solution $\Phi_\x(t)$ by
\begin{align*}
P_t(\x,A):=\P(\Phi_\x(t)\in A),
\end{align*}
which are well-defined for $t\ge 0$, initial states $\x\in\H$ and Borel sets $A\subseteq \H$. Letting $\B_b(\H)$ denote the set of bounded Borel measurable functions $f:\H \rightarrow \rbb$, the associated Markov semigroup $P_t:\B_b(\H)\to\B_b(\H)$ is defined and denoted by
\begin{align}\label{form:P_t}
P_t f(\x)=\E[f(\Phi_\x(t))], \quad f\in \B_b(\H).
\end{align}

\begin{remark} \label{rem:Feller} We note that following the estimates in Appendix~\ref{sec:well-posed}, it can be shown that the solution $\Phi_\x(t)$ is continuous with respect to the initial condition $\x\in\H$, i.e.,
\begin{align*}
\Phi_{\x_n}(t)\to \Phi_\x(t) \text{  a.s. in }\H,  
\end{align*}
whenever $\x_n\to\x$ in $\H$. As a consequence, the Markov semigroup $P_t$ is Feller. That is $P_tf\in C_b(\H)$ for all $f\in C_b(\H)$ where $C_b(\H)$ represents the set of continuous bounded functions on $\H$ \cite{cerrai2001second,da1996ergodicity,da2014stochastic}.
\end{remark}

Recall that a probability measure $\mu\in Pr(\H)$ is said to be {\it\textbf{invariant}} for the semigroup $P_t$ if for every $f\in \B_b(\H)$
\begin{align*}
\int_{\H}P_t f(\x)\mu(\d\x)=\int_{\H} f(\x)\mu(\d\x).
\end{align*}
In literature, the existence of invariant probability measures is typically established via the Krylov-Bogoliubov argument combined with the tightness of a sequence of auxiliary probability measures \cite{bakhtin2005stationary,weinan2001gibbsian,hairer2011theory,
hairer2011asymptotic,
ito1964stationary}. However, as mentioned in Section~\ref{sec:functional-setting}, since the embedding of $\M^{r_1}\subset\M^{r_2}$, $r_1>r_2$, is only continuous \cite{conti2005singular,conti2006singular}, it is not clear whether under the same hypothesis of Proposition~\ref{prop:well-posed}, an invariant probability measure $\mu$ exists. Nevertheless, we are able to assert the following moment bounds of any such $\mu$.
\begin{theorem}\label{thm:mu:moment-bound}
Under the same hypothesis of Proposition~\ref{prop:well-posed}, any invariant probability measure $\mu$ for~\eqref{eqn:react-diff:K} must satisfy
\begin{equation} \label{ineq:mu:moment-bound}
\int_{\H}\exp\{\beta\|\x\|^2_{\H}\}+\|\pi_1\x\|^n_{H^1}\|\pi_1\x\|^2_{H^2}+\|\pi_2\x\|^n_{\M^1}\mu(\emph{d} \x)<\infty,
\end{equation}
for all $\beta>0$ sufficiently small and $n\ge 1$.
\end{theorem}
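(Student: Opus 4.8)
The plan is to derive the claimed integrability of $\mu$ from a priori moment estimates on the solution $\Phi_\x(t)$ together with the invariance identity $\int_\H P_t f \, \d\mu = \int_\H f\, \d\mu$. The strategy follows the standard template for establishing regularity of invariant measures: first one proves that suitable Lyapunov-type functionals satisfy, in expectation, a differential inequality of the form $\tfrac{\d}{\d t}\E V(\Phi_\x(t)) \le -cV(\Phi_\x(t)) + C$ (or an integrated/averaged version thereof when $V$ contains higher-order norms not controlled pointwise in time), and then one integrates against $\mu$ and uses stationarity to conclude $\int_\H V\,\d\mu<\infty$. Concretely, I would split \eqref{ineq:mu:moment-bound} into three pieces: the exponential moment $\exp\{\beta\|\x\|_\H^2\}$, the mixed term $\|\pi_1\x\|_{H^1}^n\|\pi_1\x\|_{H^2}^2$, and the memory term $\|\pi_2\x\|_{\M^1}^n$.

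First I would establish the exponential moment. Applying Itô's formula to $\|\Phi_\x(t)\|_\H^2 = \|u(t)\|_H^2 + \|\eta(t)\|_\M^2$, the key structural input is the dissipativity of $\A$: the terms $-\la Au,u\ra_H$ and $\int_0^\infty K(s)\la A\eta(s),u\ra_H\,\d s$ combined with the transport contribution $\la -\partial_s\eta,\eta\ra_\M$ — using identity \eqref{eqn:<partial_s.eta,eta>} and condition \eqref{cond:K:2} which gives $\rho'=-K\le -\delta^{-1}\cdot(\text{something})$, more precisely $K'+\delta K\le 0$ forces exponential-type decay of the memory — yield a negative-definite quadratic form controlling $\|u\|_{H^1}^2 + \|\eta\|_{\M^0}^2$ (here the condition $\int_0^\infty K<1$ is exactly what keeps this form coercive, as flagged in the introduction). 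The nonlinear term contributes $\la\f(u),u\ra_H\le -a_2\|u\|_{L^{p+1}}^{p+1}+a_3|\domain|$ by \eqref{cond:phi:2}, and the noise contributes $\Tr(QQ^*)<\infty$. This gives $\d\|\Phi_\x(t)\|_\H^2 \le (-c\|\Phi_\x(t)\|_\H^2 + C)\d t + \d(\text{martingale})$ after absorbing, and then the exponential supermartingale trick (applying Itô to $e^{\beta\|\Phi_\x(t)\|_\H^2}$ and choosing $\beta$ small enough that the quadratic-variation term $\tfrac12\beta^2\cdot 4\|u\|_H^2\Tr(QQ^*)$ is dominated by the dissipation) produces the bound $\E e^{\beta\|\Phi_\x(t)\|_\H^2} \le e^{\beta\|\x\|_\H^2}e^{-ct} + C$. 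Integrating against the invariant $\mu$ and using stationarity forces $\int_\H e^{\beta\|\x\|_\H^2}\,\d\mu<\infty$. I expect all of this to be available from the a priori bounds collected in Section~\ref{sec:moment-bound:solution}, so the real content is in invoking those estimates correctly.

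Next, for the higher-regularity terms $\|\pi_1\x\|_{H^1}^n\|\pi_1\x\|_{H^2}^2$ and $\|\pi_2\x\|_{\M^1}^n$, I would run an Itô/energy estimate at the $H^1$ level: apply Itô to $\|u(t)\|_{H^1}^2 + \|\eta(t)\|_{\M^1}^2$ (equivalently to $\|A^{1/2}\Phi_\x(t)\|$-type norms). The dissipative structure now produces $-\|u\|_{H^2}^2$ on the right, the memory transport again gives a good sign via \eqref{eqn:<partial_s.eta,eta>} applied at the $H^1$ level, and the nonlinear term is handled using \eqref{cond:phi:3}: $\la\f'(u)\grad u,\grad u\ra_H \le a_\f\|u\|_{H^1}^2$, so it is dominated by the dissipation up to a multiple of $\|u\|_{H^1}^2$ which is in turn controlled by the already-established exponential moment. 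This yields, in integrated form, $\E\big[\|u(T)\|_{H^1}^2+\|\eta(T)\|_{\M^1}^2\big] + \E\int_0^T\|u(t)\|_{H^2}^2\,\d t \le \|\x\|_{\H^1}^2 + C\E\int_0^T(1+\|u(t)\|_{H^1}^2)\,\d t$. To get the product $\|u\|_{H^1}^n\|u\|_{H^2}^2$ with the weight $\|u\|_{H^1}^n$, I would multiply the $H^1$-energy balance by $\|u(t)\|_{H^1}^{n-2}$ (or work directly with $\|u\|_{H^1}^n$ and apply Itô to that), generating the term $\|u\|_{H^1}^{n-2}\|u\|_{H^2}^2$; then one either iterates in $n$ using the lower-order moments as a base case, or uses the exponential moment to dominate all the lower-order polynomial remainders. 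After integrating the resulting differential inequality over a unit time interval and against $\mu$, stationarity converts the time-averaged quantity $\int_\H \|\pi_1\x\|_{H^1}^n\|\pi_1\x\|_{H^2}^2 + \|\pi_2\x\|_{\M^1}^n\,\d\mu$ into something bounded by the already-controlled moments.

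The main obstacle, and the point requiring the most care, is the memory variable $\eta$ and the lack of compactness/smoothing in the spaces $\M^r$. Unlike for $u$, the transport equation $\partial_t\eta=-\partial_s\eta$ has no regularizing effect in $s$, so one cannot expect to "gain" an $\M^1$ bound for free from an $\M^0$ bound; instead $\|\eta(t)\|_{\M^1}$ must be propagated from the initial data and from the boundary value $\eta(t;0)=u(t)$, which is why \eqref{cond:K:2} (exponential decay of the kernel, hence of $\rho$) is essential — it is what makes the term $\tfrac12\int_0^\infty\rho'(s)\|\eta(s)\|_{H^{1+1}}^2\,\d s$ a genuine dissipation of $\|\eta\|_{\M^1}^2$ rather than merely a bounded perturbation. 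For a stationary $\mu$, the contribution of the initial data washes out and the $\M^1$-moment of $\mu$ is controlled purely by the $H^2$-moment of the $u$-marginal through the boundary coupling; making this rigorous requires either the explicit representation \eqref{eqn:weak-solution:eta} for $\eta$ in terms of $u$, or a careful bookkeeping of the $\M^1$ energy balance showing that the bad boundary term $\rho(0)\|u(t)\|_{H^2}^2$ is absorbed by the time-integrated $H^2$-dissipation of $u$ established in the previous step. I also need to be slightly careful about whether these higher-order Itô formulas can be applied directly or only along the Galerkin approximations of Appendix~\ref{sec:well-posed} with a subsequent lower-semicontinuity/Fatou passage to the limit; I would phrase the estimates at the Galerkin level and pass to the limit, which is the cleanest route and is consistent with how Proposition~\ref{prop:well-posed} is proved.
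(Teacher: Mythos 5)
Your first half (the exponential moment) is essentially the paper's argument: the dissipation estimate of Lemma~\ref{lem:moment-bound:H}, the exponential bound~\eqref{ineq:exponential-bound:H} for small $\beta$, then invariance applied to the truncation $e^{\beta\|\x\|^2_\H}\wedge N$ and a monotone/dominated convergence passage. That part is fine. Likewise, your final step — once polynomial $\H^1$-moments of $\mu$ are available, getting the mixed term $\|\pi_1\x\|^{n}_{H^1}\|\pi_1\x\|^2_{H^2}$ from the time-averaged bound~\eqref{ineq:moment-bound:H1} and invariance — is exactly how the paper concludes.

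The gap is in the middle: you never actually prove that $\mu$ charges $\H^1$, and your proposed mechanisms do not break the circularity that this requires. All of your $\H^1$-level estimates (the It\^o/energy balance for $\|u\|^2_{H^1}+\|\eta\|^2_{\M^1}$, multiplied or not by $\|u\|^{n-2}_{H^1}$) can only be initialized at $\x\in\H^1$: the memory term $\int_0^\infty K(s)\la A\eta(t;s),Au(t)\ra_H\,\d s$ is controlled only through $\|\eta(t)\|_{\M^1}$ (via~\eqref{ineq:K/rho<c}), and — as you yourself note — the transport equation has no smoothing, so if $\eta_0\in\M\setminus\M^1$ the rough block $\eta_0(\cdot-t)\boldsymbol{1}\{s>t\}$ in the representation~\eqref{eqn:weak-solution:eta} has infinite $\M^1$-norm for every $t$; the initial data does \emph{not} wash out pathwise. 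Consequently the ``time-integrated $H^2$-dissipation of $u$ established in the previous step'' is not available when the initial condition is $\mu$-typical (only $\H$-regular), and your absorption argument needs the $\M^1$ bound on $\eta$ to get the $H^2$ bound on $u$ and vice versa. The paper breaks this loop with an extra idea you are missing: the auxiliary system~\eqref{eqn:react-diff:K:u.hat}, started from the origin $0\in\H^1$ and slaved to the true solution only through the low-mode feedback $-K_1\alpha_{n_*}P_{n_*}(\uhat-u)$ with $K_1\alpha_{n_*}>a_\f$. Lemma~\ref{lem:u.hat} gives the two decisive estimates: pathwise exponential contraction $\|(u-\uhat,\eta-\etahat)(t)\|^2_\H\le e^{-ct}\|\x\|^2_\H$, and $\E\|(\uhat,\etahat)(t)\|^2_{\H^1}\le C(1+\|\x\|^2_\H)$, the latter finite because the auxiliary memory is built entirely from its own $H^2$-integrable trajectory. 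Feeding these into the doubly truncated functional $\psi_{R,N}=R\wedge(\|P_Nu\|_{H^1}+\|P_N\eta\|_{\M^1})$ (using $\|P_N\cdot\|_{H^1}\le\alpha_N^{1/2}\|\cdot\|_{H}$ on the contraction term and choosing $t$ large depending on $N$) yields a bound uniform in $R,N$, hence $\mu(\H^1)=1$ and then the higher moments via~\eqref{ineq:expo-decay:H1:expo-decay}. Without this device (or a rigorously constructed two-sided stationary solution replacing it), your plan of ``integrating the $\H^1$ energy inequality against $\mu$ and using stationarity'' cannot even be started, since the quantities being averaged may be identically infinite on the support of $\mu$ as far as you know at that stage.
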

\noindent  The proof of Theorem~\ref{thm:mu:moment-bound} will be carried out in Section~\ref{sec:moment-bound:mu}. 

Following the framework of~\cite{butkovsky2020generalized,
cerrai2020convergence,hairer2006ergodicity,
hairer2008spectral,hairer2011asymptotic}, we recall that a function $d:\H\times\H\to[0,\infty)$ is called \emph{distance-like} if it is symmetric, lower semi-continuous, and $d(\x,\y)=0\Leftrightarrow \x=\y$ \cite[Definition 4.3]{hairer2011asymptotic}. Let $\W_d$ be the Wasserstein metric in $Pr(\H)$ associated with $d$, defined by
\begin{align} \label{form:W_d}
\W_d(\mu_1,\mu_2)=\sup_{[f]_{\text{Lip}}\leq1}\Big|\int_{\H}f(\x)\mu_1(\d \x)-\int_{\H}f(\x)\mu_2(\d\x)\Big|,
\end{align}
where
\begin{align*}
[f]_{\text{Lip}}=\sup_{\x\neq \y}\frac{|f(\x)-f(\y)|}{d(\x,\y)}.
\end{align*}
By the dual Kantorovich Theorem, it is well-known that
\begin{equation} \label{form:W_d:dual-Kantorovich}
\W_d(\mu_1,\mu_2) = \inf \E\, d(X,Y),
\end{equation}
where the infimum is taken over all pairs $(X,Y)$ such that $X\sim \mu_1$ and $Y\sim\mu_2$. In our settings, we will particularly pay attention to the following two distances in $\H$: the former is  the discrete metric, i.e., $d(\x,\y)=\mathbf{1}\{\x\neq \y\}$. The corresponding $\W_d$ is the usual total variation distance, denoted by $\W_{\TV}$. The latter is the distance $d_N$, $N>0$, given by \cite{butkovsky2020generalized,hairer2011asymptotic,kulik2017ergodic,kulik2015generalized}
\begin{equation} \label{form:d_N}
d_N(\x,\y):=N\|\x-\y\|_\H\mi 1,
\end{equation}
which we will employ to estimate the convergent rate of~\eqref{eqn:react-diff:K} toward equilibrium. The relation between $\W_{d_N}$ and $\W_{\TV}$ will be become clearer in Section~\ref{sec:ergodicity}, cf. Lemma~\ref{lem:W_(d_N)<W_TV}.

As mentioned in Section~\ref{sec:intro:overview}, given the generality of the potential $\f$, we will make use of the noise term that is sufficiently forced in many directions of the phase space. So that $\f$ can be dominated by the noise together with the Laplacian. More precisely, we make the following additional assumption on $Q$ and $\f$: \cite{cerrai2020convergence,glatt2017unique}
\begin{assumption}\label{cond:ergodicity}
Let $\f$ be as in Assumption~\ref{cond:phi:well-possed} and $\nbar\in\nbb$ be an index such that
\begin{equation} \label{cond:phi:ergodicity}
\sup_{x\in\rbb}\f'(x)=a_\f< \big[1-\|K\|_{L^1(\rbb^+)}\big]\alpha_{\nbar},
\end{equation}
where $\alpha_{\nbar}$ is the eigenvalue associated with $e_{\nbar}$ as in~\eqref{eqn:Ae_k=alpha_k.e_k}. There exists a positive constant  $a_Q$ such that
\begin{equation} \label{cond:Q:ergodicity}
\|Qu\|_H\ge a_Q\|P_{\nbar}u\|_H,\quad u\in H,
\end{equation}
where $Q$ is as in Assumption~\ref{cond:Q:well-posed}, and $P_{\nbar}$ is the projection onto $\{e_1,\dots,e_{\nbar}\}$ as in~\eqref{form:P_n.u}.
\end{assumption}

\begin{remark} \label{rem:ergodicity} We note that Assumption~\ref{cond:ergodicity} is actually a condition about the noise structure. Since $\sup_{x}\f'(x)=a_\f<\infty$, cf.~\eqref{cond:phi:3}, $\|K\|_{L^1(\rbb^+)}<1$, cf.~\eqref{cond:K:1}, and the sequence of eigenvalues $\{\alpha_n\}_{n\ge 1}$ as in~\eqref{form:A} is diverging to infinity, there always exists an index $\nbar$ such that condition~\eqref{cond:phi:ergodicity} holds. We then require that noise be forced in at least $e_k-$directions, $k=1,\dots,\nbar$, hence the condition~\eqref{cond:Q:ergodicity}.
\end{remark}

We are now in a position to state the main result of the paper ensuring geometric ergodicity of~\eqref{eqn:react-diff:K}.

\begin{theorem} \label{thm:ergodicity}
Under the same hypothesis of Proposition~\ref{prop:well-posed}, suppose Assumption~\ref{cond:ergodicity} holds. Then, \eqref{eqn:react-diff:K} admits a unique invariant measure $\mu$. Furthermore, there exists $N>0$ sufficiently large such that
\begin{equation} \label{ineq:ergodicity:1}
\W_{d_N}(P_t(\x,\cdot),\mu)\le C_1(1+\|\x\|^2_{\H})e^{-C_2t}, \quad t\ge 0,\, \x\in\H,
\end{equation}
where $d_N$ is as in~\eqref{form:d_N} and $C_1,C_2$ are positive constants independent of $\x$ and $t$.

\end{theorem}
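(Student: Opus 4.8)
The proof of Theorem~\ref{thm:ergodicity} follows the \emph{generalized coupling} strategy of~\cite{butkovsky2020generalized,hairer2011asymptotic,kulik2015generalized,kulik2017ergodic}, which requires two ingredients: a Lyapunov structure controlling the return to a bounded set, and a contraction/small-set type estimate on that bounded set obtained by steering one copy of the dynamics toward another using the noise forced on the low modes $e_1,\dots,e_{\nbar}$. First I would record the Lyapunov function. From the a priori moment bounds of Section~\ref{sec:moment-bound:solution} (which also underlie Theorem~\ref{thm:mu:moment-bound}), the map $V(\x)=\exp\{\beta\|\x\|^2_{\H}\}$ is a Lyapunov function for a small $\beta>0$: there are constants $\kappa,C>0$ with $P_tV(\x)\le e^{-\kappa t}V(\x)+C$. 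The key identity making this work in the memory setting is~\eqref{eqn:<partial_s.eta,eta>}, which produces the dissipative term $\tfrac12\int_0^\infty\rho'(s)\|\eta(s)\|^2_{H^1}\d s$; together with condition~\eqref{cond:K:2} (i.e.\ $\rho'\le -\delta\rho$ up to constants, via~\eqref{ineq:K/rho<c}) this yields genuine exponential decay of the $\M$-component, while the $H$-component is handled by the dissipativity~\eqref{cond:phi:2} of $\f$ and the trace condition~\eqref{cond:Q:2} on the noise.

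Next I would establish the generalized coupling on the sublevel sets of $V$. Fix $\x,\y\in\H$. Run the true process $\Phi_\x(t)=(u_\x,\eta_\x)$ driven by $w$, and an auxiliary "shifted" process $\Phi_\y^{\mathrm{ctrl}}(t)=(\widetilde u,\widetilde\eta)$ started at $\y$ which solves~\eqref{eqn:react-diff:K} with an added feedback control of the form $\lambda P_{\nbar}(u_\x-\widetilde u)\d t$ on the $u$-equation; by~\eqref{cond:Q:ergodicity} this control lies in the range of $Q$, so by Girsanov's theorem the law of $\Phi_\y^{\mathrm{ctrl}}$ on path space is equivalent to $P_{\cdot}(\y,\cdot)$, with a Radon--Nikodym density whose exponential moments are controlled by the Lyapunov bounds. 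The difference $\zeta=u_\x-\widetilde u$, $\theta=\eta_\x-\widetilde\eta$ satisfies a closed linear-type equation with no noise; testing with $\zeta$ and using~\eqref{eqn:<partial_s.eta,eta>} for the memory part, the spectral gap of $A$ on the high modes together with~\eqref{cond:phi:ergodicity} (which is exactly the statement $a_\f<[1-\|K\|_{L^1}]\alpha_{\nbar}$, ensuring the high-mode Laplacian beats $\f'$ even after accounting for the memory contribution via $\|K\|_{L^1}$) gives, for $\lambda$ large, an estimate $\tfrac{\d}{\d t}\big(\|\zeta\|_H^2+\|\theta\|_\M^2\big)\le -c\big(\|\zeta\|_H^2+\|\theta\|_\M^2\big)$, hence $\|\Phi_\x(t)-\Phi_\y^{\mathrm{ctrl}}(t)\|_\H\to 0$ exponentially, pathwise.

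Combining these, I would follow the now-standard argument (e.g.\ \cite[Theorem~2.4]{butkovsky2020generalized} or the scheme in~\cite{hairer2011asymptotic}): the exponential contraction of $\|\Phi_\x-\Phi_\y^{\mathrm{ctrl}}\|_\H$ plus the bound on the Girsanov density yields, for the distance $d_N$ of~\eqref{form:d_N} with $N$ large, a one-step contraction $\W_{d_N}(P_T(\x,\cdot),P_T(\y,\cdot))\le \alpha\, d_N(\x,\y)$ for $\x,\y$ in a sublevel set of $V$ and suitable fixed $T$, with $\alpha<1$; off the sublevel set the Lyapunov drift pulls the dynamics back. A weighted Harris-type theorem in the Wasserstein-with-Lyapunov framework (the contracting Markov kernel criterion of~\cite{hairer2011asymptotic}, adapted as in~\cite{butkovsky2020generalized,kulik2017ergodic}) then gives existence and uniqueness of $\mu$ and the exponential estimate~\eqref{ineq:ergodicity:1}, with the prefactor $1+\|\x\|^2_\H$ coming from $\log V$. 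Uniqueness in particular confirms, a posteriori, that the $\mu$ of Theorem~\ref{thm:mu:moment-bound} exists and is unique.

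The main obstacle is the lack of compactness of the embeddings $\M^{r_1}\subset\M^{r_2}$, which is precisely why a direct Krylov--Bogoliubov/Harris approach fails and why the coupling must be done in the \emph{full} space $\H$ rather than reducing to finitely many modes: the control acts only on $P_{\nbar}u$, yet we must contract the entire infinite-dimensional memory tail $\eta$ as well. The saving grace is the transport structure~\eqref{eqn:partial_s}--\eqref{form:eta}: since $\eta(t;s)=u(t-s)$ for $s\le t$, the memory component is slaved to the past of $u$, so controlling the difference in the $u$-component over a long enough window automatically controls $\theta$ in $\M$ — but making this quantitative, uniformly in the initial memory data and with the right interplay between the decay rate $\delta$ from~\eqref{cond:K:2}, the spectral gap, and the control strength $\lambda$, is the delicate point and will require care in the energy estimates using~\eqref{eqn:<partial_s.eta,eta>} and~\eqref{ineq:K/rho<c}.
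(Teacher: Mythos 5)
Your proposal follows essentially the same route as the paper: an auxiliary ``shifted'' copy of~\eqref{eqn:react-diff:K} with the low-mode feedback $K_2\alpha_{\nbar}P_{\nbar}(u_\x-\utilde)$ (the paper's~\eqref{eqn:react-diff:K:u.tilde}), a pathwise exponential contraction of the difference driven by condition~\eqref{cond:phi:ergodicity} together with the memory dissipation~\eqref{eqn:<partial_s.eta,eta>} (Lemma~\ref{lem:dissipative-bound}), a Girsanov/total-variation comparison of the shifted law with $P_t(\y,\cdot)$ made possible by~\eqref{cond:Q:ergodicity} (Lemma~\ref{lem:error-in-law}), and finally the weighted-Harris criterion of Butkovsky et al.\ with a Lyapunov function from Section~\ref{sec:moment-bound:solution} (Lemma~\ref{lem:contracting-d-small} plus the proof of Theorem~\ref{thm:ergodicity}). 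The only imprecision is cosmetic: to obtain the prefactor $1+\|\x\|^2_{\H}$ in~\eqref{ineq:ergodicity:1} one should take the quadratic Lyapunov function $g$ of~\eqref{form:g} (as the paper does), since choosing $V(\x)=e^{\beta\|\x\|^2_{\H}}$ would yield a prefactor of order $1+e^{\beta\|\x\|^2_{\H}}$ rather than ``$\log V$''.
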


\noindent The proof of Theorem~\ref{thm:ergodicity} will be presented in Section~\ref{sec:ergodicity}.

\section{A priori bounds of ~\eqref{eqn:react-diff:K}} \label{sec:moment-bound:solution}
Throughout the rest of the paper, $c$ and $C$ denote generic positive constants that may change from line to line. The main parameters that they depend on will appear between parenthesis, e.g., $c(T,q)$ is a function of $T$ and $q$.

In this section, we collect several useful a priori moment bounds on the solutions of~\eqref{eqn:react-diff:K}. These results will be employed to prove Theorems~\ref{thm:mu:moment-bound} and~\ref{thm:ergodicity} in later sections. 

We start off by setting 
\begin{equation}\label{form:eps_1.and.K_1}
\varepsilon_1:=\frac{1-\|K\|_{L^1(\rbb^+)}}{\|K\|_{L^1(\rbb^+)}},\quad \text{and}\quad K_1:=1-\big(1+\tfrac{1}{2}\varepsilon_1\big)\|K\|_{L^1(\rbb^+)}=\frac{1-\|K\|_{L^1(\rbb^+)} }{2}.
\end{equation}
Recalling condition~\eqref{cond:K:1}, we observe that $\varepsilon_1$ and $K_1$ are both positive. In Lemma~\ref{lem:moment-bound:H} below, we assert two moment bounds in $\H$.
\begin{lemma} \label{lem:moment-bound:H}
Under the same hypothesis as in Proposition~\ref{prop:well-posed}, the followings hold:

1. \begin{equation}\label{ineq:moment-bound:H}
\E\|(u_\x(t),\eta_\x(t))\|^2_\H \le e^{-c t}\|\x\|^2_\H+C,
\end{equation}
for some positive constants $c$ and $C$ independent of initial condition $\x$ and time $t$.

2. For all
\begin{equation} \label{cond:beta}
\beta\in\big(0,2K_1\alpha_1/\|Q\|^2_{L(H)}\big),
\end{equation}
there exist positive constants $c=c(\beta)$ and $C=C(\beta)$ independent of $\x$ and $t$ such that
\begin{equation} \label{ineq:exponential-bound:H}
\E\, e^{\beta\|(u_\x(t),\eta_\x(t))\|_\H^2} \le e^{-ct} e^{\beta \|\x\|^2_\H}+C.
\end{equation}
\end{lemma}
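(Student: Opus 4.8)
The plan is to work at the level of the Galerkin approximations constructed in Appendix~\ref{sec:well-posed}, derive the energy identities there, and pass to the limit; for the sake of exposition I will write the computations formally on the solution $\Phi_\x(t)=(u_\x(t),\eta_\x(t))$, with the understanding that each step is rigorous on the finite-dimensional approximations and survives the limit by lower semicontinuity and the a priori bounds. The starting point is the It\^o formula applied to $\tfrac12\|u_\x(t)\|_H^2 + \tfrac12\|\eta_\x(t)\|_\M^2$. For the $u$-component this produces the drift terms $-\|u_\x\|_{H^1}^2$, the memory cross term $\int_0^\infty K(s)\la A^{1/2}\eta_\x(s),A^{1/2}u_\x\ra_H\,\d s$, the nonlinear term $\la\f(u_\x),u_\x\ra_H$, and the It\^o correction $\tfrac12\Tr(QQ^*)$. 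For the $\eta$-component I invoke identity~\eqref{eqn:<partial_s.eta,eta>}, which gives $\la -\partial_s\eta,\eta\ra_\M = \tfrac12\rho(0)\|u_\x\|_{H^1}^2 + \tfrac12\int_0^\infty \rho'(s)\|\eta_\x(s)\|_{H^1}^2\,\d s$; note $\rho(0)=\|K\|_{L^1}$ and $\rho'(s)=-K(s)\le 0$.

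The key algebraic point is the dissipation balance. Summing the two identities, the term $+\tfrac12\|K\|_{L^1}\|u_\x\|_{H^1}^2$ from the $\eta$-equation partially cancels against $-\|u_\x\|_{H^1}^2$, and the memory cross term is controlled by Cauchy--Schwarz and Young's inequality using the weight $\varepsilon_1$: writing $\int_0^\infty K(s)\la A^{1/2}\eta_\x(s),A^{1/2}u_\x\ra_H\,\d s \le \tfrac{\varepsilon_1}{2}\|K\|_{L^1}\|u_\x\|_{H^1}^2 + \tfrac{1}{2\varepsilon_1}\int_0^\infty K(s)\|\eta_\x(s)\|_{H^1}^2\,\d s$. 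The choice of $\varepsilon_1$ and $K_1$ in~\eqref{form:eps_1.and.K_1} is engineered precisely so that the net coefficient of $\|u_\x\|_{H^1}^2$ is $-K_1<0$. Meanwhile the leftover negative term $-\tfrac12\int_0^\infty K(s)\|\eta_\x(s)\|_{H^1}^2\,\d s$ from $\rho'$ absorbs the residual from the cross term at the cost of shrinking its coefficient, and what remains of $-\tfrac12\int K(s)\|\eta_\x\|_{H^1}^2$ gives genuine dissipation in the $\M$-norm after using~\eqref{ineq:K/rho<c}, i.e. $K(s)\ge c^{-1}(-\rho'(s))$ is not quite what we need — rather, the dissipativity~\eqref{cond:K:2}, $K'+\delta K\le 0$, together with $\rho'=-K$, gives $\rho(s)\le \delta^{-1}K(s)$ up to the relevant manipulation, hence $\int_0^\infty K(s)\|\eta_\x(s)\|_{H^1}^2\,\d s \ge c\,\delta\,\|\eta_\x\|_\M^2$; this converts the $K$-weighted dissipation into $-c\|\eta_\x\|_\M^2$. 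For the nonlinear term, condition~\eqref{cond:phi:2} gives $\la\f(u_\x),u_\x\ra_H \le -a_2\|u_\x\|_{L^{p+1}}^{p+1}+a_3|\domain| \le a_3|\domain|$, so it only contributes a constant. Combining, I obtain a differential inequality of the form $\tfrac{\d}{\d t}\E\|\Phi_\x(t)\|_\H^2 \le -c\,\E\|\Phi_\x(t)\|_\H^2 + C$, using Poincar\'e $\|u\|_{H^1}^2\ge\alpha_1\|u\|_H^2$, and Gronwall yields~\eqref{ineq:moment-bound:H}.

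For part 2, the plan is to apply It\^o's formula to $e^{\beta\|\Phi_\x(t)\|_\H^2}$. This generates $\beta e^{\beta\|\Phi_\x\|_\H^2}$ times the drift from part 1, plus the extra It\^o term $\tfrac12\beta^2 e^{\beta\|\Phi_\x\|_\H^2}\cdot\|Q^* u_\x\|_H^2 \le \tfrac12\beta^2\|Q\|_{L(H)}^2\|u_\x\|_H^2\, e^{\beta\|\Phi_\x\|_\H^2}$. The dissipation $-\beta c\|\Phi_\x\|_\H^2 e^{\beta\|\Phi_\x\|_\H^2}$ — in fact the sharper $-\beta K_1 \cdot 2\alpha_1\|u_\x\|_H^2 - \beta c\|\eta_\x\|_\M^2$ coming from $-K_1\|u_\x\|_{H^1}^2\le -K_1\alpha_1\|u_\x\|_H^2$, keeping a factor $2$ from not fully using Poincar\'e — dominates the quadratic It\^o correction exactly when $\beta < 2K_1\alpha_1/\|Q\|_{L(H)}^2$, which is condition~\eqref{cond:beta}. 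One is left with $\tfrac{\d}{\d t}\E e^{\beta\|\Phi_\x(t)\|_\H^2} \le -c(\beta)\E\big[\|\Phi_\x(t)\|_\H^2 e^{\beta\|\Phi_\x(t)\|_\H^2}\big] + C(\beta)$, and since $x e^{\beta x}\ge \tfrac{1}{\beta}(e^{\beta x}-1)$ — or more simply $x e^{\beta x}\ge e^{\beta x}$ for $x\ge 1$ while on $\{x<1\}$ the exponential is bounded — this closes to $\tfrac{\d}{\d t}\E e^{\beta\|\Phi_\x(t)\|_\H^2}\le -c'\,\E e^{\beta\|\Phi_\x(t)\|_\H^2}+C'$, and Gronwall gives~\eqref{ineq:exponential-bound:H}.

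The main obstacle is the rigorous justification rather than the formal computation: the It\^o formula for $e^{\beta\|\cdot\|_\H^2}$ and even for $\|\cdot\|_\M^2$ is not directly available at the level of the weak solution because $\eta_\x$ lives in a space without compact embedding and $\partial_s\eta_\x$ need not be in $\M$. The way around this is to run the entire argument on the Galerkin system $\Phi_\x^{(m)}$, where everything is finite-dimensional and the It\^o formula is classical, obtain the bounds with constants uniform in $m$, and then pass to the limit using the convergence $\Phi_\x^{(m)}\to\Phi_\x$ established in Appendix~\ref{sec:well-posed} together with Fatou's lemma (the exponential and quadratic functionals being lower semicontinuous). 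A secondary technical point is verifying that identity~\eqref{eqn:<partial_s.eta,eta>} holds in the approximating scheme with the correct sign on $\rho'$, and that the boundary term $\rho(s)\|\eta(s)\|_{H^1}^2\big|^\infty_0$ vanishes at $s=\infty$ — this is where the finite-speed-of-propagation structure~\eqref{form:eta} of the transport equation, giving $\eta_\x(t;s)=\eta_0(s-t)$ for $s>t$ with $\eta_0\in\M$, makes the tail integrable.
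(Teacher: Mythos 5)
Your plan follows essentially the same route as the paper: It\^o's formula for $\tfrac12\|(u,\eta)\|^2_\H$ (made rigorous at the Galerkin level), the identity~\eqref{eqn:<partial_s.eta,eta>} with $\eta(t;0)=u(t)$ and $\rho(0)=\|K\|_{L^1(\rbb^+)}$, a weighted Young splitting of the memory cross term, conversion of the surviving $K$-weighted term into $-c\|\eta\|^2_\M$ via~\eqref{cond:K:2} (equivalently $\rho'\le-\delta\rho$), the sign condition~\eqref{cond:phi:2} for $\f$, and Gronwall; part 2 is the same exponential-moment computation in which the extra It\^o correction is absorbed by the $-K_1\alpha_1\|u\|^2_H$ dissipation. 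The Galerkin-plus-Fatou justification and the remark on the boundary term at $s=\infty$ are sound and consistent with the appendix.

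Two constant-level points, however, do not work as written. First, your Young split $\int_0^\infty K\la\eta,u\ra_{H^1}\d s\le\tfrac{\varepsilon_1}{2}\|K\|_{L^1}\|u\|^2_{H^1}+\tfrac{1}{2\varepsilon_1}\int_0^\infty K\|\eta\|^2_{H^1}\d s$ leaves, after adding the $-\tfrac12\int_0^\infty K\|\eta\|^2_{H^1}\d s$ coming from $\rho'$ in~\eqref{eqn:<partial_s.eta,eta>}, the net $\eta$-coefficient $\tfrac12\big(\tfrac1{\varepsilon_1}-1\big)$, which is nonnegative whenever $\|K\|_{L^1(\rbb^+)}\ge\tfrac12$ (since $\varepsilon_1=(1-\|K\|_{L^1})/\|K\|_{L^1}\le1$ there); in that regime no $\M$-dissipation survives and the differential inequality does not close. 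The paper's split uses the weight $1+\varepsilon_1$, i.e.\ $\tfrac{1+\varepsilon_1}{2}\|K\|_{L^1}\|u\|^2_{H^1}+\tfrac{1}{2(1+\varepsilon_1)}\int_0^\infty K\|\eta\|^2_{H^1}\d s$, which yields the net coefficient $-K_1$ for $\|u\|^2_{H^1}$ and the strictly negative leftover $\tfrac{\varepsilon_1}{2(1+\varepsilon_1)}\int_0^\infty\rho'\|\eta\|^2_{H^1}\d s\le-\tfrac{\varepsilon_1\delta}{2(1+\varepsilon_1)}\|\eta\|^2_\M$ for every admissible kernel; any weight strictly between $1$ and $1/\|K\|_{L^1}$ works, but yours does not in general. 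Second, in part 2 the factor-of-two bookkeeping is off: applying It\^o to $e^{\beta\|\Phi\|^2_\H}$ gives the correction $2\beta^2e^{\beta\|\Phi\|^2_\H}\|Q^*u\|^2_H$ (not $\tfrac12\beta^2\cdots$), and comparing with the dissipation then requires $\beta<K_1\alpha_1/\|Q\|^2_{L(H)}$; the phrase ``keeping a factor $2$ from not fully using Poincar\'e'' does not produce the missing factor, since Poincar\'e only gives $\|u\|^2_{H^1}\ge\alpha_1\|u\|^2_H$. The paper obtains the threshold $2K_1\alpha_1/\|Q\|^2_{L(H)}$ by working with $g=\tfrac12\|\cdot\|^2_\H$, for which the correction is indeed $\tfrac12\beta^2e^{\beta g}\|Q^*u\|^2_H$; you should either adopt that normalization or adjust the admissible range of $\beta$ accordingly. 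Aside from these fixable constants, your argument coincides with the paper's.
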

\begin{proof} To simplify notations, throughout the proof, we will omit the subscript $\x$ in $u_\x$ and $\eta_\x$. 

Denote by $g$ the function given by
\begin{equation} \label{form:g}
g(u,\eta)=\tfrac{1}{2}\|(u,\eta)\|_\H^2=\tfrac{1}{2}\|u\|^2_H+\tfrac{1}{2}\|\eta\|^2_\M.
\end{equation}
A routine calculation gives
\begin{align*}
\d g(u(t),\eta(t))& =  -\|u(t)\|^2_{H^1}\d t+ \int_0^\infty\close  K(s)\la \eta(t;s),u(t)\ra_{H^1}\d s\d t+\la -\partial_s\eta,\eta\ra_\M\d t\\
&\qquad+\la \f(u(t),u(t)\ra_H\d t+\la u(t),Q\d w(t)\ra_H+\tfrac{1}{2} \Tr(QQ^*) \d t.
\end{align*}
Recalling condition~\eqref{cond:Q:2}, we have
\begin{align*}
\Tr(QQ^*)=\sum_{k\ge 1}\lambda_k^2<\infty.
\end{align*}
Concerning the convolution involving $K$, by Young's inquality,
\begin{align*}
\int_0^\infty\close  K(s)\la \eta(t;s), u(t)\ra_{H^1}\d s&\le \tfrac{1+\varepsilon_1}{2}\|K\|_{L^1(\rbb^+)}\| u(t)\|^2_{H^1}+\tfrac{1}{2(1+\varepsilon_1)}\int_0^\infty\close  K(s)\|\eta(t;s)\|^2_{H^1}\d s\\
&= \tfrac{1+\varepsilon_1}{2}\|K\|_{L^1(\rbb^+)}\| u(t)\|^2_{H^1}-\tfrac{1}{2(1+\varepsilon_1)}\int_0^\infty\close  \rho'(s)\|\eta(t;s)\|^2_{H^1}\d s,
\end{align*}
where $\varepsilon_1$ is as in~\eqref{form:eps_1.and.K_1}. Recalling the identity~\eqref{eqn:<partial_s.eta,eta>} and the fact that $\eta(t;0)=u(t)$, it holds that
\begin{align*}
\la -\partial_s\eta(t),\eta(t)\ra_\M &= \tfrac{1}{2}\|K\|_{L^1(\rbb)}\|u(t)\|^2_{H^1}+\tfrac{1}{2}\int_0^\infty\close  \rho'(s)\|\eta(t;s)\|^2_{H^1}\d s.
\end{align*}
Also, since $K$ satisfies~\eqref{cond:K:2} and $\rho(t)=\int_t^\infty K(s)\d s$, it holds that
\begin{align}\label{cond:rho}
\rho'(t)\le -\delta \rho(t),\quad t\ge 0.
\end{align}
Recalling $K_1$ as in~\eqref{form:eps_1.and.K_1}, we then derive the bound
\begin{align}
 &-\|u(t)\|^2_{H^1}+ \int_0^\infty\close  K(s)\la \eta(t;s),u(t)\ra_{H^1}\d s+\la -\partial_s\eta,\eta\ra_\M \nonumber\\
 &\le -\Big[1-\big(1+\tfrac{\varepsilon_1}{2}\big)\|K\|_{L^1(\rbb^+)}\Big]\| u(t)\|^2_{H^1}+\tfrac{\varepsilon_1}{2(1+\varepsilon_1)}\int_0^\infty\close  \rho'(s)\|\eta(t;s)\|^2_{H^1}\d s\nonumber\\
 &\le -\Big[1-\big(1+\tfrac{\varepsilon_1}{2}\big)\|K\|_{L^1(\rbb^+)}\Big]\| u(t)\|^2_{H^1}-\tfrac{\varepsilon_1\delta}{2(1+\varepsilon_1)}\int_0^\infty\close  \rho(s)\|\eta(t;s)\|^2_{H^1}\d s\nonumber\\
 &= -K_1\| u(t)\|^2_{H^1}-\tfrac{\varepsilon_1\delta}{2(1+\varepsilon_1)}\|\eta(t)\|^2_\M \label{ineq:u+int.K<eta,u>:H1} \\
 &\le -K_1\alpha_1\| u(t)\|^2_{H}-\tfrac{\varepsilon_1\delta}{2(1+\varepsilon_1)}\|\eta(t)\|^2_\M. \label{ineq:u+int.K<eta,u>:H}
\end{align}
To bound the non-linear term, we invoke~\eqref{cond:phi:3} to see that
\begin{align*}
\la \f(u(t)),u(t)\ra_H\le -a_2\|u(t)\|^{p+1}_{L^{p+1}}+a_3|\domain|\le a_3|\domain|.
\end{align*}
In the above, $|\domain|$ denotes the volume of $\domain$ in $\rbb^d$. Collecting everything now yields the estimate
\begin{align} \label{ineq:d.E[g(t)]}
\tfrac{\d}{\d t}\E g(u(t),\eta(t))\le -K_1\alpha_1\E\| u(t)\|^2_{H}-\tfrac{\varepsilon_1\delta}{2(1+\varepsilon_1)}\E\|\eta(t)\|^2_\M+a_3|\domain|+\tfrac{1}{2}\sum_{k\ge 1}\lambda_k^2,
\end{align}
which proves~\eqref{ineq:moment-bound:H} by virtue of Gronwall's inequality.

With regard to~\eqref{ineq:exponential-bound:H}, for $\beta>0$ to be chosen later, the partial derivatives of $e^{\beta g}$ along the direction of $\xi\in \H$ is given by
\begin{align*}
\la D_u e^{\beta g},\xi\ra_\H =\kappa e^{\beta g}\la u,\pi_1\xi\ra_H,\quad\la D_\eta e^{\beta g},\xi\ra_\H &=\kappa e^{\beta g}\la \eta,\pi_2\xi\ra_\M,\end{align*}
and
\begin{align*}
 D_{uu} e^{\beta g}(\xi) &= \beta e^{\beta g} \pi_1\xi+\beta^2 e^{\beta g} \la u,\pi_1\xi\ra_{H}\pi_1\xi.
\end{align*}
Then, by Ito's formula,
\begin{align*}
\d\, e^{\beta g(u(t),\eta(t))} & = \beta e^{\beta g(u(t),\eta(t))}\Big( -\|u(t)\|^2_{H^1}\d t+ \int_0^\infty\close  K(s)\la \eta(t;s),u(t)\ra_{H^1}\d s\d t+\la -\partial_s\eta,\eta\ra_\M\d t\\
&\qquad+\la \f(u(t),u(t)\ra_H\d t+\la u(t),Q\d w(t)\ra_H+\tfrac{1}{2} \sum_{k\ge 1}\lambda_k^2 \d t+\tfrac{1}{2}\beta\sum_{k\ge 1} \lambda_k^2|\la u(t),e_k\ra_H|^2\d t\Big),
\end{align*}
where $\lambda_k$ is as in Assumption~\ref{cond:Q:well-posed}. We note that
\begin{align*}
\sum_{k\ge 1} \lambda_k^2|\la u(t),e_k\ra_H|^2\le \|Q\|^2_{L(H)}\|u(t)\|^2_H.
\end{align*}
Together with the estimates as in the proof of~\eqref{ineq:moment-bound:H}, we arrive at the bound
\begin{align*}
&\tfrac{\d}{\d t}\E \,e^{\beta g(u(t),\eta(t))}\\ &\le \E\,\beta  e^{\beta g(u(t),\eta(t))} \Big(-\big[K_1\alpha_1-\tfrac{1}{2}\beta\|Q\|^2_{L(H)}\big] \|u(t)\|^2_{H}-\tfrac{\varepsilon_1\delta}{2(1+\varepsilon_1)}\|\eta(t)\|^2_\M+a_3|\domain|+\tfrac{1}{2} \sum_{k\ge 1}\lambda_k^2\Big).
\end{align*}
Since $\beta\in(0,2K_1\alpha_1/\|Q\|^2_{L(H)})$, we infer 
\begin{align*}
\tfrac{\d}{\d t}\E \,e^{\beta g(u(t),\eta(t))} &\le \E e^{\beta g(u(t),\eta(t))}(-c\,g(u(t),\eta(t))+C),
\end{align*}
for some positive constants $c$ and $C$ independent of $t$. We now employ the elementary inequality
\begin{align*}
e^{\beta r}(-cr+C)\le -\widetilde{c} e^{\beta r}+\widetilde{C},\qquad r\ge 0,
\end{align*}
to arrive at the bound
\begin{align*}
\tfrac{\d}{\d t}\E \,e^{\beta g(u(t),\eta(t))} &\le -c\,\E e^{\beta g(u(t),\eta(t))}+C,
\end{align*}
whence 
\begin{align*}
\E \,e^{\beta g(u(t),\eta(t))}  \le e^{-c t}\E \,e^{\beta g(\x)}+C.
\end{align*}
In the above, $c$ and $C$ are positive constants independent of initial condition $\x$ and time $t$. This establishes~\eqref{ineq:exponential-bound:H}, thereby concluding the proof.
\end{proof}

Next, we state and prove Lemma~\ref{lem:moment-bound:H1} giving moment bounds in $\H^1$ of the solutions.

\begin{lemma}\label{lem:moment-bound:H1}
For all $\x\in\H^1$, $n\ge 2$, and $t\ge 0$, it holds that
\begin{equation}\label{ineq:moment-bound:H1}
\E \|(u_\x(t),\eta_\x(t))\|^{2n}_{\H^1}+\int_0^t \|u_\x(r)\|^{2n-2}_{H^1}\|Au_\x(r)\|^2_H+\|\eta_\x(r)\|^{2n}_{\M^1}\emph{d} r \le C(\|\x\|^{n}_{\H^1}+t), 
\end{equation}
and
\begin{equation}\label{ineq:expo-decay:H1:expo-decay}
\E \|(u_\x(t),\eta_\x(t))\|^{2n}_{\H^1} \le  e^{-ct}\|\x\|^{2n}_{\H^1}+C\,e^{\beta\|\x\|^2_\H}+C,
\end{equation}
where $\beta$ is as in~\eqref{cond:beta}, $c=c(n,\beta)$ and $C=C(n,\beta)$ are positive constants independent of $\x$ and $t$.
\end{lemma}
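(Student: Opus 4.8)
The plan is to run, at the level of the Galerkin approximations $(u^N_\x,\eta^N_\x)$ of Appendix~\ref{sec:well-posed}, the same It\^o-and-power scheme used for Lemma~\ref{lem:moment-bound:H}, but now applied to the functional $g_1(u,\eta):=\tfrac12\|(u,\eta)\|^2_{\H^1}$ and its powers $g_1^n$, and then to pass to the limit $N\to\infty$ using lower semicontinuity of the norms and Fatou's lemma. Working with $g_1$ is natural because the dissipative structure of $\A$ survives verbatim in the stronger norm $\H^1$, while the quantities $\|Au^N_\x\|_H$ appearing below are perfectly well defined for the (finite-dimensional, smooth) truncated system, which is also what makes the use of It\^o's formula legitimate.

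First I would compute $\d g_1$. Pairing the $u$-equation in~\eqref{eqn:react-diff:K} with $Au$ in $H$ and recalling $\la v_1,v_2\ra_{H^1}=\la Av_1,v_2\ra_H$, It\^o's formula produces a drift $-\|Au\|^2_H+\int_0^\infty\close K(s)\la A\eta(s),Au\ra_H\,\d s+\la\f(u),Au\ra_H$, a martingale term $\la Au,Q\,\d w\ra_H$, and an It\^o correction $\tfrac12\Tr(AQQ^*)=\tfrac12\sum_k\lambda_k^2\alpha_k$, which is finite \emph{precisely} by~\eqref{cond:Q:2}. For the memory part, the $H^2$-analogue of~\eqref{eqn:<partial_s.eta,eta>} together with $\eta(t;0)=u(t)$ and $\rho(0)=\|K\|_{L^1(\rbb^+)}$ gives $\la-\partial_s\eta,\eta\ra_{\M^1}=\tfrac12\|K\|_{L^1(\rbb^+)}\|Au\|^2_H+\tfrac12\int_0^\infty\close\rho'(s)\|\eta(s)\|^2_{H^2}\,\d s$. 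Then, exactly as in the passage culminating in~\eqref{ineq:u+int.K<eta,u>:H1} (Young's inequality with parameter $\varepsilon_1$ on the memory convolution, followed by $\rho'\le-\delta\rho$ from~\eqref{cond:rho}), the linear part of the drift is bounded above by $-K_1\|Au\|^2_H-\tfrac{\varepsilon_1\delta}{2(1+\varepsilon_1)}\|\eta\|^2_{\M^1}$.

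The genuinely new term — and the one I expect to be the main obstacle — is the nonlinearity $\la\f(u),Au\ra_H$: here the sign condition~\eqref{cond:phi:2} is of no use, so instead I would integrate by parts, using $u|_{\partial\domain}=0$ and $\f(0)=0$ to kill the boundary term, and then invoke the one-sided derivative bound~\eqref{cond:phi:3} to get $\la\f(u),Au\ra_H=\int_\domain\f'(u)|\grad u|^2\,\d x\le a_\f\|u\|^2_{H^1}$. Since $\|u\|^2_{H^1}=\la Au,u\ra_H\le\varepsilon\|Au\|^2_H+C_\varepsilon\|u\|^2_H$, choosing $\varepsilon$ small (depending on $K_1$ and $a_\f$) lets me absorb this into the $-K_1\|Au\|^2_H$ dissipation — and this is exactly where the critical assumption $\|K\|_{L^1(\rbb^+)}<1$, i.e. $K_1>0$, is used. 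Combining this with $\alpha_1\|u\|^2_{H^1}\le\|Au\|^2_H$ to peel off a fraction of the top-order dissipation, I arrive at $\d g_1\le\big[-\tfrac{K_1}{4}\|Au\|^2_H-c\,g_1+C\|u\|^2_H+C\big]\,\d t+\la Au,Q\,\d w\ra_H$ for suitable constants $c,C>0$.

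Finally, applying It\^o's formula to $g_1^n$ — the quadratic variation of the martingale is $\sum_k\lambda_k^2\alpha_k^2|\la u,e_k\ra_H|^2\,\d t\le\Tr(AQQ^*)\,\|u\|^2_{H^1}\,\d t\le Cg_1\,\d t$, so the second-order correction is $O(g_1^{n-1})$ — and using Young's inequality to write $g_1^{n-1}\|u\|^2_H\le\varepsilon g_1^n+C_\varepsilon\|u\|^{2n}_H$ and $g_1^{n-1}\le\varepsilon g_1^n+C_\varepsilon$, together with $g_1^n\gtrsim\|u\|^{2n}_{H^1}+\|\eta\|^{2n}_{\M^1}$ and $g_1^{n-1}\|Au\|^2_H\gtrsim\|u\|^{2n-2}_{H^1}\|Au\|^2_H$, I obtain
\begin{align*}
\tfrac{\d}{\d t}\E g_1^n(u(t),\eta(t))+c\,\E\big[\|u(t)\|^{2n-2}_{H^1}\|Au(t)\|^2_H+\|\eta(t)\|^{2n}_{\M^1}\big]\le -c\,\E g_1^n(u(t),\eta(t))+C\,\E\|u(t)\|^{2n}_H+C.
\end{align*}
The one remaining input, $\E\|u(t)\|^{2n}_H$, is supplied by Lemma~\ref{lem:moment-bound:H}: from~\eqref{ineq:exponential-bound:H} and $r^n\le C_ne^{\beta r}$ one gets $\E\|u(t)\|^{2n}_H\le C(e^{-ct}e^{\beta\|\x\|^2_\H}+1)$ for admissible $\beta$ as in~\eqref{cond:beta} (equivalently, iterating It\^o's formula on $g^n$ with $g$ as in~\eqref{form:g} gives the polynomial bound $\E\|u(t)\|^{2n}_H\le C(\|\x\|^{2n}_\H+1)$). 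Inserting this, Gronwall's inequality yields~\eqref{ineq:expo-decay:H1:expo-decay}, while integrating the differential inequality on $[0,t]$ and bounding $\int_0^t\E\|u(r)\|^{2n}_H\,\d r$ accordingly yields~\eqref{ineq:moment-bound:H1}; a last passage to the limit in $N$ (lower semicontinuity of $\|\cdot\|_{\H^1}$ and $\|A\cdot\|_H$, Fatou) transfers the bounds to $\Phi_\x$ itself.
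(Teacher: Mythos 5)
Your proof is correct and follows essentially the same route as the paper's: compute $\d g_1$ for $g_1(u,\eta)=\tfrac12\|(u,\eta)\|^2_{\H^1}$, bound the memory convolution by Young's inequality plus $\rho'\le-\delta\rho$, handle the nonlinearity via $\la\f(u),Au\ra_H=\int_\domain\f'(u)|\grad u|^2\le a_\f\|u\|^2_{H^1}$ absorbed into $-K_1\|Au\|^2_H$, then apply It\^o to $g_1^n$ and invoke the $\H$-level moment bound from Lemma~\ref{lem:moment-bound:H} to close the Gronwall argument. (The paper presents this as an "induction on $n$" but never actually uses the inductive hypothesis, so your direct treatment for general $n$ is the same computation; your explicit mention of doing the It\^o steps on the Galerkin truncation and passing to the limit by lower semicontinuity is a welcome technical clarification the paper leaves implicit.)
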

\begin{proof}
We proceed to prove~\eqref{ineq:moment-bound:H1} by induction on $n$. 

We first start with the base case $n=2$ and set
\begin{equation}\label{form:g_1}
g_1(u,\eta)=\tfrac{1}{2}\big(\|u\|^2_{H^1}+\|\eta\|^2_{\M^1}\big),
\end{equation}
A routine calculation yields
\begin{equation} \label{eqn:Ito:g_1}
\begin{aligned}
\d g_1(u(t),\eta(t))& = -\|Au(t)\|^2_{H}\d t+ \int_0^\infty\close  K(s)\la A\eta(t;s),Au(t)\ra_{H}\d s\d t+\la -\partial_s\eta,\eta\ra_{\M^1}\d t\\
&\qquad+\la \f'(u(t)\grad u(t),\grad u(t)\ra_H\d t+\la u(t),Q\d w(t)\ra_{H^1}+\tfrac{1}{2} \Tr(AQQ^*)\d t.
\end{aligned}
\end{equation}
In the above,
\begin{align*}
\Tr(AQQ^*)=\sum_{k\ge 1}\lambda_k^2\alpha_k<\infty,
\end{align*}
by virtue of condition~\eqref{cond:Q:2}. Similarly to~\eqref{eqn:<partial_s.eta,eta>}, we have
\begin{align*}
\la -\partial_s\eta(t),\eta(t)\ra_{\M^1} &=-\tfrac{1}{2}\int_0^\infty\close  \rho(s)\partial_s\|A\eta(t;s)\|^2_H\d s\\
&= \tfrac{1}{2}\|K\|_{L^1(\rbb)}\|Au(t)\|^2_{H}+\tfrac{1}{2}\int_0^\infty\close  \rho'(s)\|A\eta(t;s)\|^2_{H}\d s,
\end{align*}
where in the last implication above, we employed again the fact that $\eta(t;0)=u(t)$. Also,
\begin{align*}
\int_0^\infty\close  K(s)\la A\eta(t;s), Au(t)\ra_{H}\d s&\le \tfrac{1+\varepsilon_1}{2}\|K\|_{L^1(\rbb^+)}\| Au(t)\|^2_{H}+\tfrac{1}{2(1+\varepsilon_1)}\int_0^\infty\close  K(s)\|A\eta(t;s)\|^2_{H}\d s\\
&= \tfrac{1+\varepsilon_1}{2}\|K\|_{L^1(\rbb^+)}\|A u(t)\|^2_{H}-\tfrac{1}{2(1+\varepsilon_1)}\int_0^\infty\close  \rho'(s)\|A\eta(t;s)\|^2_{H}\d s,
\end{align*}
where $\varepsilon_1$ is given by~\eqref{form:eps_1.and.K_1}. Using an argument similarly to~\eqref{ineq:u+int.K<eta,u>:H}, we have the bound
\begin{align*}
 &-\|Au(t)\|^2_{H}+ \int_0^\infty\close  K(s)\la A\eta(t;s),Au(t)\ra_{H}\d s+\la -\partial_s\eta,\eta\ra_{\M^1}\\
 &\le -K_1\| Au(t)\|^2_{H}-\tfrac{\varepsilon_1\delta}{2(1+\varepsilon_1)}\|\eta(t)\|^2_{\M^1},
\end{align*}
where $\delta$, $K_1$ are respectively as in~\eqref{cond:K:2},~\eqref{form:eps_1.and.K_1} and $\alpha_1$ is the first eigenvalue of $A$. With regard to the nonlinear term in~\eqref{eqn:Ito:g_1}, we invoke Assumption~\ref{cond:phi:well-possed} to see that
\begin{align*}
\la \f'(u(t))\grad u(t),\grad u(t)\ra_H \le a_\f\la \grad u(t),\grad u(t)\ra_H =a_\f\la u(t),Au(t)\ra_H\le \frac{a_\f^2}{2K_1}\|u(t)\|^2_H+\tfrac{1}{2}K_1\|Au(t)\|^2_H.
\end{align*}
Combining the above estimates with~\eqref{eqn:Ito:g_1}, we arrive at the moment bound
\begin{align} \label{ineq:d.E(u,eta):H1}
\tfrac{\d }{\d t}\E \|(u(t),\eta(t))\|^2_{\H^1}&+K_1\E\|Au(t)\|^2_{H}+\tfrac{\varepsilon_1\delta}{1+\varepsilon_1}\E\|\eta(t)\|^2_{\M^1}\le \tfrac{a_\f^2}{K_1} \E\|u(t)\|^2_{H}+\sum_{k\ge 1}\lambda_k^2\alpha_k.
\end{align}
To estimate the term involving $\|u(t)\|_H^2$ on the above right-hand side, we invoke~\eqref{ineq:moment-bound:H} to see that
\begin{align*}
 \E\|u(t)\|^2_{H} \le  e^{-ct}\|\x\|^2_\H+C.
\end{align*}
We then infer the estimate
\begin{align*}
\E \|(u(t),\eta(t))\|^2_{\H^1}&+\int_0^t K_1\E\|Au(r)\|^2_{H}+\tfrac{\varepsilon_1\delta}{1+\varepsilon_1}\E\|\eta(r)\|^2_{\M^1}\le C\big(\|\x\|^2_{\H^1}+e^{\beta\|\x\|^2_\H}+t\big),
\end{align*}
and
\begin{align*}
\E \|(u(t),\eta(t))\|^2_{\H^1}&\le e^{-ct}\|\x\|^2_{\H^1}+C\int_0^t e^{-c(t-r)} \Big(\E\|u(t)\|^2_{H} +1\Big)\d r\\
&\le e^{-ct}\|\x\|^2_{\H^1}+C\,e^{\beta\|\x\|^2_\H}+C,
\end{align*}
thereby finishing the proof of~\eqref{ineq:moment-bound:H1}-\eqref{ineq:expo-decay:H1:expo-decay} for $n=1$. 

Now assume~\eqref{ineq:moment-bound:H1}-\eqref{ineq:expo-decay:H1:expo-decay} hold for up to $(n-1)\ge 1$. Let us consider the case $n$. We first compute partial derivatives of $g_1(u,\eta)^n$ in $\H^1$:
\begin{align*}
\la D_u g_1^n,\xi\ra_{\H^1} =n g_1^{n-1} \la u,\pi_1\xi\ra_{H^1},\quad\la D_\eta g_1^n,\xi\ra_{\H^1} &=ng_1^{n-1}\la \eta,\pi_2\xi\ra_{\M^1},
\end{align*}
and
\begin{align*}
 D_{uu} (g_1^{n})(\xi) &= ng_1^{n-1} \pi_1\xi+ n(n-1) g_1^{n-2}\la u,\pi_1\xi\ra_{H^1}u.
\end{align*}
By Ito's formula, the following holds
\begin{align*}
&\d g_1(u(t),\eta(t))^n \\
&=n g_1(u(t),\eta(t))^{n-1}\Big(-\|Au(t)\|^2_{H}\d t+ \int_0^\infty\close  K(s)\la A\eta(t;s),Au(t)\ra_{H}\d s\d t+\la -\partial_s\eta,\eta\ra_{\M^1}\d t\\
&\qquad+\la \f'(u(t)\grad u(t),\grad u(t)\ra_H\d t+\la u(t),Q\d w(t)\ra_{H^1}+\tfrac{1}{2} \Tr(AQQ^*)\d t\Big)\\
&\qquad +\tfrac{1}{2}n(n-1)g_1(u(t),\eta(t))^{n-2}\sum_{k\ge 1}\lambda_k^2|\la u(t),e_k\ra_{H^1}|^2 .
\end{align*}
Similarly to the estimates in the base case $n=1$, we readily have
\begin{align*}
&-\|Au(t)\|^2_{H}+ \int_0^\infty\close  K(s)\la A\eta(t;s),Au(t)\ra_{H}\d s+\la -\partial_s\eta,\eta\ra_{\M^1}+\la \f'(u(t)\grad u(t),\grad u(t)\ra_H\\
&\le -\tfrac{1}{2}K_1\| Au(t)\|^2_{H}-\tfrac{\varepsilon_1\delta}{2(1+\varepsilon_1)}\|\eta(t)\|^2_{\M^1}+\frac{a_\f^2}{2K_1}\|u(t)\|^2_H\\
&\le -\tfrac{1}{4}K_1\| Au(t)\|^2_{H}-\tfrac{1}{4}K_1\alpha_1\| u(t)\|^2_{H^1}-\tfrac{\varepsilon_1\delta}{2(1+\varepsilon_1)}\|\eta(t)\|^2_{\M^1}+\tfrac{a_\f^2}{2K_1}\|u(t)\|^2_H
\end{align*}
Using $\varepsilon$-Young's inequality, we note that
\begin{align*}
g_1(u(t),\eta(t))^{n-1}\|u(t)\|^2_H\le \tfrac{n}{n-1}\varepsilon^{\frac{n}{n-1}} g_1(u(t),\eta(t))^{n}+n\varepsilon^{-n}\|u(t)\|^{2n}_H.
\end{align*}
Likewise, 
\begin{align*}
\tfrac{1}{2}g_1(u(t),\eta(t))^{n-2}\sum_{k\ge 1}\lambda_k^2|\la u(t),e_k\ra_{H^1}|^2&\le \|Q\|^2_{L(H)}g_1(u(t),\eta(t))^{n-2}\cdot \tfrac{1}{2}\|u(t)\|^2_{H^1}\\
&\le \|Q\|^2_{L(H)}g_1(u(t),\eta(t))^{n-1}\\
&\le \tfrac{n}{n-1}\varepsilon^{\frac{n}{n-1}} g_1(u(t),\eta(t))^{n}+n\varepsilon^{-n}\|Q\|^{2n}_{L(H)}.
\end{align*}
By taking $\varepsilon$ small enough, we arrive at the following useful estimate in expectation
\begin{equation}\label{ineq:E.g_1}
\begin{aligned} 
\tfrac{\d}{\d t} \E g_1(u(t),\eta(t))^n &\le -c\, \E g_1(u(t),\eta(t))^n -c\,\E g_1(u(t),\eta(t))^{n-1}\big(\|Au(t)\|^2_H+\|\eta(t)\|^2_{\M^1}\big)\\
&\qquad\qquad+C\E\|u(t)\|^{2n}_H+C.
\end{aligned}
\end{equation}
In view of Lemma~\ref{lem:moment-bound:H}, 
\begin{align} \label{ineq:E.|u|^n_H}
\E\|u(t)\|^{2n}_H\le Ce^{-ct}e^{\beta \|\x\|^2_\H}+C.
\end{align}
Combining~\eqref{ineq:E.|u|^n_H} and~\eqref{ineq:E.g_1} and integrating with respect to time, we obtain
\begin{align*}
\E g_1(u(t),\eta(t))^n + c\int_0^t\E g_1(u(r),\eta(r))^{n-1}\big(\|Au(r)\|^2_H+\|\eta(r)\|^2_{\M^1}\big)\d r &\le g_1(\x)^n +C e^{\beta \|\x\|^2_\H}+Ct.
\end{align*}
Recalling $g_1$ as in~\eqref{form:g_1}, estimate~\eqref{ineq:moment-bound:H1} immediately follows from the above inequality. Also, by variation formula, 
\begin{align*}
\E g_1(u(t),\eta(t))^n \le e^{-ct}g_1(\x)^n +C e^{\beta \|\x\|^2_\H}+C,
\end{align*}
which proves~\eqref{ineq:expo-decay:H1:expo-decay}. The proof is thus finished.
\end{proof}

\section{Moment bounds of invariant probability measures} \label{sec:moment-bound:mu}

In this section, we provide the proof of Theorem~\ref{thm:mu:moment-bound} giving regularity of any invariant measure~$\mu$. Following the framework in \cite{glatt2021long}, we introduce the auxiliary system
\begin{equation} \label{eqn:react-diff:K:u.hat}
\begin{aligned}
\d \uhat(t)&=-A \uhat(t)\d t +\int_{0}^{\infty}\close K(s) A\etahat(t;s)\d s\d t+\f(\uhat(t))\d t+Q\d w(t)\\
&\qquad-K_1\alpha_{n_*}P_{n_*}(\uhat(t)-u(t))\d t,\\
\d \etahat(t)&= -\partial_s\etahat(t)\d t, \quad(\uhat(0),\etahat(0))=0\in \H,\, \etahat(t;0)=\uhat(t),\, t>0.
\end{aligned}
\end{equation}
In the above, $K_1$ is as in~\eqref{form:eps_1.and.K_1}, $\alpha_{n_*}$ is the eigenvalue of $A$ associated with $e_{n_*}$ as in~\eqref{eqn:Ae_k=alpha_k.e_k}, and $P_{n_*}$ is the projection on to $\{e_1,\dots,e_{n_*}\}$ as in~\eqref{form:P_n.u}. Also, we chose $n_*$ sufficiently large such that 
\begin{align} \label{cond:K_1.alpha_n*>a_phi}
\sup_{x\in\rbb}\f'(x)=a_\f<K_1\alpha_{n_*}=\tfrac{1}{2}\big[1-\|K\|_{L^1(\rbb^+)}\big]\alpha_{n_*}.
\end{align}
\begin{remark} \label{rem:noise:regularity}
We note that the choice of $n_*$ as in~\eqref{cond:K_1.alpha_n*>a_phi} is always valid owing to the fact that $\sup_{x}\f'(x)<\infty$, cf.~\eqref{cond:phi:3}, $\|K\|_{L^1(\rbb^+)}<1$, cf.~\eqref{cond:K:1}, and $\{\alpha_n\}_{n\ge 1}$ is diverging to infinity. Although~\eqref{cond:K_1.alpha_n*>a_phi} seems similarly to~\eqref{cond:phi:ergodicity} as in  Assumption~\ref{cond:ergodicity} for geometric ergodicity, cf. Remark~\ref{rem:ergodicity}, for higher regularity properties of invariant probability measures, we do not require noise be forced in enough many directions. Only the same conditions for the well-posedness as in Proposition~\ref{prop:well-posed} are sufficient to prove Theorem~\ref{thm:mu:moment-bound}. 
\end{remark}

It is worth to point out that system~\eqref{eqn:react-diff:K:u.hat} only differs from~\eqref{eqn:react-diff:K} by the appearance of the linear term $-K_1\alpha_{n_*}P_{n_*}(\uhat(t)-u(t))\d t$. More importantly, since~\eqref{eqn:react-diff:K:u.hat} starts from the origin, it enjoys better regularity compared with~\eqref{eqn:react-diff:K}. This is precisely summarized in the following lemma whose proof will be deferred to the end of this section.

\begin{lemma}\label{lem:u.hat}
Let $(\uhat(t),\etahat(t))$ be the process as in~\eqref{eqn:react-diff:K:u.hat}. For all $\x\in \H$ and $t\ge 0$, the followings hold:
\begin{equation}\label{ineq:u-uhat}
\|(u(t)-\uhat(t),\eta(t)-\etahat(t))\|^2_{\H}\le e^{-ct}\|\x\|^2_\H,
\end{equation}
and
\begin{equation} \label{ineq:uhat}
\E\|(\uhat(t),\etahat(t))\|^2_{\H^1}\le C\|\x\|^2_\H+C,
\end{equation}
for some positive constants $c$ and $C$ independent of $\x$ and $t$.
\end{lemma}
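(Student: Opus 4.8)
The strategy is twofold: first establish the contractive difference estimate \eqref{ineq:u-uhat} by a \emph{pathwise} energy argument, and then feed it---together with the $\H$-bounds of Lemma~\ref{lem:moment-bound:H}---into an $\H^1$-energy estimate for $(\uhat,\etahat)$, exploiting that this auxiliary process is launched from the origin. For Step~1, set $v:=u-\uhat$ and $\zeta:=\eta-\etahat$. Subtracting \eqref{eqn:react-diff:K:u.hat} from \eqref{eqn:react-diff:K}, the noise $Q\,\d w$ cancels, so for each fixed realization $(v,\zeta)$ solves the deterministic system
\begin{equation*}
\d v=\Big(-Av+\int_0^\infty K(s)A\zeta(t;s)\,\d s+\f(u)-\f(\uhat)-K_1\alpha_{n_*}P_{n_*}v\Big)\d t,\qquad \d\zeta=-\partial_s\zeta\,\d t,
\end{equation*}
with $\zeta(t;0)=v(t)$ and $(v(0),\zeta(0))=\x$. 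I would test this against $(v,\zeta)$ in $\H$: the combination $-\|v\|_{H^1}^2+\int_0^\infty K(s)\la\zeta(t;s),v\ra_{H^1}\d s+\la-\partial_s\zeta,\zeta\ra_\M$ is controlled exactly as in \eqref{ineq:u+int.K<eta,u>:H1}, giving $\le -K_1\|v\|_{H^1}^2-\tfrac{\varepsilon_1\delta}{2(1+\varepsilon_1)}\|\zeta\|_\M^2$; the mean value theorem and \eqref{cond:phi:3} give $\la\f(u)-\f(\uhat),v\ra_H\le a_\f\|v\|_H^2$; and, splitting $\|v\|_H^2=\|P_{n_*}v\|_H^2+\|(I-P_{n_*})v\|_H^2$, the low-mode piece is absorbed into the damping $-K_1\alpha_{n_*}\|P_{n_*}v\|_H^2$ by \eqref{cond:K_1.alpha_n*>a_phi}, while $a_\f\|(I-P_{n_*})v\|_H^2\le \tfrac{a_\f}{\alpha_{n_*+1}}\|v\|_{H^1}^2$ with $a_\f/\alpha_{n_*+1}<K_1$. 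Hence $\tfrac{\d}{\d t}\|(v,\zeta)\|_\H^2\le -c\|(v,\zeta)\|_\H^2$, and Gronwall's inequality yields \eqref{ineq:u-uhat}.

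For Step~2 I would apply Itô's formula to $g_1(\uhat,\etahat)=\tfrac12(\|\uhat\|_{H^1}^2+\|\etahat\|_{\M^1}^2)$, repeating the computation behind \eqref{eqn:Ito:g_1}--\eqref{ineq:d.E(u,eta):H1} in the proof of Lemma~\ref{lem:moment-bound:H1}, the only new contribution being $-K_1\alpha_{n_*}\la P_{n_*}(\uhat-u),\uhat\ra_{H^1}$. Since $P_{n_*}$ commutes with $A$ and has finite-dimensional range, $\|P_{n_*}w\|_{H^1}\le\alpha_{n_*}^{1/2}\|w\|_H$, so after a Young splitting this term is $\le -\tfrac12 K_1\alpha_{n_*}\|P_{n_*}\uhat\|_{H^1}^2+\tfrac12 K_1\alpha_{n_*}^2\|u\|_H^2$, i.e.\ it only adds a forcing proportional to $\|u\|_H^2$. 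Together with $\la\f'(\uhat)\grad\uhat,\grad\uhat\ra_H\le\tfrac{a_\f^2}{2K_1}\|\uhat\|_H^2+\tfrac12 K_1\|A\uhat\|_H^2$ and $\|A\uhat\|_H^2\ge\alpha_1\|\uhat\|_{H^1}^2$, this gives
\begin{equation*}
\tfrac{\d}{\d t}\E g_1(\uhat(t),\etahat(t))\le -c\,\E g_1(\uhat(t),\etahat(t))+C\big(\E\|u(t)\|_H^2+\E\|\uhat(t)\|_H^2+1\big).
\end{equation*}
By Lemma~\ref{lem:moment-bound:H} and Step~1 the forcing is $\le C(e^{-ct}\|\x\|_\H^2+1)$, and since $g_1(\uhat(0),\etahat(0))=g_1(0,0)=0$, the variation-of-constants formula produces $\E g_1(\uhat(t),\etahat(t))\le C\|\x\|_\H^2+C$ uniformly in $t$, which is \eqref{ineq:uhat}.

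The formal energy and Itô manipulations (in particular applying Itô in $\H^1$ to a process started at $0\in\H$) should be justified by first performing them on the Galerkin truncations of Appendix~\ref{sec:well-posed} and passing to the limit. The one point that genuinely needs care---which I regard as the crux of the argument---is arranging the $g_1$-evolution so that its right-hand side involves \emph{only} the $H$-norms of $u$ and $\uhat$ (already controlled by Lemma~\ref{lem:moment-bound:H} and Step~1) and no uncontrolled $H^1$-norm; this is precisely the parabolic smoothing that turns the vanishing initial value $g_1(0,0)=0$ into a bound uniform in time, and it relies on the dissipativity packaged in \eqref{ineq:u+int.K<eta,u>:H1} and on the choice \eqref{cond:K_1.alpha_n*>a_phi} of $n_*$.
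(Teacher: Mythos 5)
Your proposal is correct and follows essentially the same route as the paper: a pathwise $\H$-energy estimate for the difference using the extra damping $-K_1\alpha_{n_*}P_{n_*}$ together with \eqref{cond:K_1.alpha_n*>a_phi}, then an $\H^1$-energy estimate for $(\uhat,\etahat)$ started at $0$ whose forcing is controlled by the $H$-norms of $u$ and $\uhat-u$. The only (harmless) bookkeeping difference is in Step~2: you Young-split the cross term against $\|P_{n_*}\uhat\|_{H^1}^2$, whereas the paper writes $\la P_{n_*}u,\uhat\ra_{H^1}=\la P_{n_*}u,A\uhat\ra_H$ and absorbs into the $\|A\uhat\|_H^2$ dissipation; both produce the same forcing $C\|u\|_H^2$.
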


Assuming the above result, we are now ready to conclude Theorem~\ref{thm:mu:moment-bound}.

\begin{proof}[Proof of Theorem~\ref{thm:mu:moment-bound}]
We first show that
\begin{align} \label{ineq:mu:exponential-bound:H}
\int_\H e^{\beta\|\x\|^2_\H}\mu(\d\x)<\infty,
\end{align}
for all $\beta\in(0,2K_1\alpha_1/\|Q\|^2_{L(H)})$, cf.~\eqref{cond:beta}. To see this, for $\varepsilon>0$, consider $R=R(\varepsilon)>0$ such that
\begin{align*}
\mu(B_R^c)<\varepsilon,
\end{align*}
where
\begin{align*}
B_R=\{x\in\H:\|\x\|_\H\le R\}.
\end{align*}
Given $N>0$, we set $\phi_N(\x)=e^{\beta\|\x\|^2_\H}\mi N$. By invariance of $\mu$, since $\phi_N$ is bounded,
\begin{align*}
\int_\H P_t\phi_N(\x)\mu(\d\x)=\int_\H \phi_N(\x)\mu(\d\x).
\end{align*}
Also, by the choice of $B_R$, we have
\begin{align*}
\int_\H P_t\phi_N(\x)\mu(\d\x)& =\int_{B_R} P_t\phi_N(\x)\mu(\d\x)+\int_{B^c_R} P_t\phi_N(\x)\mu(\d\x)\le \int_{B_R} P_t\phi_N(\x)\mu(\d\x)+N\varepsilon.
\end{align*}
Considering $P_t\phi_N(\x)$, we invoke~\eqref{ineq:exponential-bound:H} to see that
\begin{align*}
P_t\phi_N(\x)=\E\big(e^{\beta\|(u(t),\eta(t))\|^2_\H}\mi N\big) \le e^{-ct}e^{\beta\|\x\|^2_\H}+C,
\end{align*}
whence
\begin{align*}
\int_{B_R} P_t\phi_N(\x)\mu(\d\x)\le e^{-ct}e^{\beta R^2}+C.
\end{align*}
It follows that for all $N$ we have the bound
\begin{align*}
\int_\H \phi_N(\x)\mu(\d\x)\le e^{-ct}e^{\beta R^2}+N\varepsilon+C.
\end{align*}
We may take $\varepsilon$ small and then take t sufficiently large to arrive at the following uniform bound in $N$:
\begin{align*}
\int_\H \phi_N(\x)\mu(\d\x)\le C<\infty.
\end{align*}
The Dominated Convergence Theorem then implies~\eqref{ineq:mu:exponential-bound:H}.

Next, we aim to show that $\mu$ must concentrate in $\H^1$. To do so, for $R,\,N$, we set $$\psi_{R,N}(u,\eta)=R\mi (\|P_Nu\|_{H^1}+\|P_N\eta\|_{\M^1}).$$
We invoke invariance of $\mu$ again to see that
\begin{align} \label{eqn:psi_RN}
\int_\H P_t\psi_{R,N}(\x)\mu(\d\x)=\int_\H \phi_{R,N}(\x)\mu(\d\x).
\end{align}
Recalling the pair $(\uhat(t),\etahat(t))$ solving the auxiliary system~\eqref{eqn:react-diff:K:u.hat}, we estimate as follows:
\begin{align*}
P_t\psi_{R,N}(\x)\le \E \|P_N(u(t)-\uhat(t))\|_{H^1}+\E\|P_N(\eta(t)-\etahat(t))\|_{\M^1}+ \E \|P_N\uhat(t)\|_{H^1}+\E\|P_N\etahat(t)\|_{\M^1}.
\end{align*}
In light of ~\eqref{ineq:u-uhat} and Sobolev embedding, we have
\begin{align*}
&\E \|P_N(u(t)-\uhat(t))\|_{H^1}+\E\|P_N(\eta(t)-\etahat(t))\|_{\M^1}\\
&\le \alpha_N^{1/2}\big(\E \|u(t)-\uhat(t)\|_{H}+\E\|\eta(t)-\etahat(t)\|_{\M}\big)\le \alpha_N^{1/2} e^{-ct}\|\x\|_{\H}.
\end{align*}
Also, we invoke~\eqref{ineq:uhat} to see that
\begin{align*}
 \E \|P_N\uhat(t)\|_{H^1}+\E\|P_N\etahat(t)\|_{\M^1} &\le  \E \|\uhat(t)\|_{H^1}+\E\|\etahat(t)\|_{\M^1}\le C\|\x\|_{\H}+C.
\end{align*}
It follows that
\begin{align*}
P_t\psi_{R,N}(\x)\le \alpha_N^{1/2} e^{-ct}\|\x\|_{\H}+C\|\x\|_{\H}+C,
\end{align*}
where $c$ and $C$ are independent of $R,N,\x$ and $t$. Combining with~\eqref{eqn:psi_RN}, we infer that
\begin{align*}
\int_\H \phi_{R,N}(\x)\mu(\d\x) \le \alpha_N^{1/2} e^{-ct}\int_\H\|\x\|_{\H}\mu(\d\x)+C\int_\H\|\x\|_{\H}\mu(\d\x)+C.
\end{align*}
By virtue of~\eqref{ineq:mu:exponential-bound:H}, we readily have $\int_\H\|\x\|_{\H}\mu(\d\x)<\infty$, implying
\begin{align*}
\int_\H \phi_{R,N}(\x)\mu(\d\x)\le \alpha_N^{1/2} e^{-ct}C+C.
\end{align*}
Since the above inequality holds for arbitrarily $t>0$, we may take $t$ sufficiently large so that $\alpha_N^{1/2} e^{-ct}<1$. As a consequence, we obtain the following uniform bound in $R,N$
\begin{align*}
\int_\H \phi_{R,N}(\x)\mu(\d\x)\le C.
\end{align*}
We invoke the Monotone Convergence Theorem again to obtain 
\begin{align*}
\int_\H \|u\|_{H^1}+\|\eta\|_{\M^1}\mu(\d u,\d\eta)<\infty,
\end{align*}
which proves that $\mu(\H^1)=1$.

We now turn to higher moment bounds in $\H^1$ for $\mu$. For any $n\ge 1$ and $\x\in\H^1$, recall~\eqref{ineq:expo-decay:H1:expo-decay} that
\begin{align*}
\E\|(u(t),\eta(t))\|^{2n}_{\H^1}\le e^{-ct}\|\x\|^{2n}_{\H^1}+Ce^{\beta\|\x\|^2_\H}+C.
\end{align*}
Using an argument similarly to the proof of~\eqref{ineq:mu:exponential-bound:H}, we deduce the bound
\begin{align}  \label{ineq:mu:moment-bound:H1}
\int_{\H}\|\x\|^{2n}_{\H^1}\mu(\d \x)<\infty.
\end{align}
With regard to $\int_\H \|u\|^{2{n-1}}_{H^1}\|Au\|^2_H\mu(\d u,\d\eta)$, we employ~\eqref{ineq:moment-bound:H1} to see that
\begin{align*}
\int_\H \big(\|u\|^{2{n-1}}_{H^1}\|Au\|^2_H\mi R \big)\mu(\d u,\d\eta)&=\frac{1}{t}\int_\H\int_0^t \E\Big(\|u(r)\|^{2n-2}_{H^1}\|Au(r)\|^2_H\mi R\Big)\mu(\d \x)\\
&\le   \frac{C}{t}\Big(\int_\H\|\x\|^{2n}_{\H^1}\mu(\d\x)+t\Big)\\
&=C\int_\H\|\x\|^{2n}_{\H^1}\mu(\d\x)+C<\infty,
\end{align*}
whence
\begin{align} \label{ineq:mu:moment-bound:H1:u}
\int_\H \|u\|^{2{n-1}}_{H^1}\|Au\|^2_H\mu(\d u,\d\eta)<\infty,
\end{align}
by virtue of the Monotone Convergence Theorem. 

Finally, we combine~\eqref{ineq:mu:exponential-bound:H},~\eqref{ineq:mu:exponential-bound:H} and~\eqref{ineq:mu:moment-bound:H1:u} to deduce~\eqref{ineq:mu:moment-bound}. The proof is thus finished.
\end{proof}

We now give the proof of Lemma~\ref{lem:u.hat}.

\begin{proof}[Proof of Lemma~\ref{lem:u.hat}]
We start with~\eqref{ineq:u-uhat} and set $\ubar=\uhat-u$, $\etabar=\etahat-\eta$. Observe that $(\ubar(t),\etabar(t))$ satisfies the following deterministic equation with random coefficients
\begin{align*}
\tfrac{\d}{\d t} \ubar(t)&=-A \ubar(t) +\int_{0}^{\infty}\close K(s) A\etabar(t;s)\d s+\f(\uhat(t))-\f(u(t))-K_1\alpha_{n_*}P_{n_*}\ubar(t),\\
\tfrac{\d}{\d t} \etabar(t)&= -\partial_s\etabar(t), \quad (\ubar(0),\etabar(0))=\x\in \H,\quad \etabar(t;0)=\ubar(t),\, t>0.
\end{align*}
It follows that (recalling $g$ as in~\eqref{form:g})
\begin{align*}
\frac{\d }{\d t}g(\ubar(t),\etabar(t))&=-\|\ubar(t)\|^2_{H^1}+ \int_0^\infty\close  K(s)\la \etabar(t;s),\ubar(t)\ra_{H^1}\d s+\la -\partial_s\etabar(t),\etabar(t)\ra_\M\\
&\qquad +\la\f(\uhat(t))-\f(u(t)),\ubar(t)\ra_H -K_1\alpha_{n_*}\|P_{n_*}\ubar(t)\|^2_H.
\end{align*}
Similarly to~\eqref{ineq:u+int.K<eta,u>:H1}, we readily have
\begin{align*}
-\|\ubar(t)\|^2_{H^1}&+ \int_0^\infty\close  K(s)\la \etabar(t;s),\ubar(t)\ra_{H^1}\d s+\la -\partial_s\etabar(t),\etabar(t)\ra_\M \\
 &\le -K_1\| \ubar(t)\|^2_{H^1}-\tfrac{\varepsilon_1\delta}{2(1+\varepsilon_1)}\|\etabar(t)\|^2_\M \\
 &\le -K_1\|(I-P_{n_*}) \ubar(t)\|^2_{H^1}-\tfrac{\varepsilon_1\delta}{2(1+\varepsilon_1)}\|\etabar(t)\|^2_\M \\
 &\le -K_1\alpha_{n_*}\|(I-P_{n_*}) \ubar(t)\|^2_{H}-\tfrac{\varepsilon_1\delta}{2(1+\varepsilon_1)}\|\etabar(t)\|^2_\M.
\end{align*}
In light of condition~\eqref{cond:phi:3},
\begin{align*}
\la\f(\uhat(t))-\f(u(t)),\ubar(t)\ra_H \le a_\f\|\ubar(t)\|^2_H.
\end{align*}
As a consequence, we obtain the almost sure bound
\begin{align*}
\frac{\d }{\d t}g(\ubar(t),\etabar(t)) \le -(K_1\alpha_{n_*}-a_\f)\|\ubar(t)\|^2_H-\tfrac{\varepsilon_1\delta}{2(1+\varepsilon_1)}\|\etabar(t)\|^2_\M,
\end{align*}
which together with the choice of $\alpha_{n_*}$ as in~\eqref{cond:K_1.alpha_n*>a_phi} and $(\ubar(0),\etabar(0))=\x$ clearly implies~\eqref{ineq:u-uhat}.

With regard to~\eqref{ineq:uhat}, we employ an argument similarly to the proof of Lemma~\ref{lem:moment-bound:H1} (in the base case $n=1$). In particular, following~\eqref{ineq:d.E(u,eta):H1}, we have the bound
\begin{align*}
\tfrac{\d }{\d t}\E \|(\uhat(t),\etahat(t))\|^2_{\H^1}&+K_1\E\|A\uhat(t)\|^2_{H}+\tfrac{\varepsilon_1\delta}{1+\varepsilon_1}\E\|\etahat(t)\|^2_{\M^1}\\
&\le \tfrac{a_\f^2}{K_1} \E\|\uhat(t)\|^2_{H}+\sum_{k\ge 1}\lambda_k^2\alpha_k
 -K_1\alpha_{n_*}\E\la P_{n_*}(\uhat(t)-u(t)),\uhat(t)\ra_{H^1}.
\end{align*}
Concerning the cross term on the above right-hand side, we employ Cauchy-Schwarz inequality to see that
\begin{align*}
-\la P_{n_*}(\uhat(t)-u(t)),\uhat(t)\ra_{H^1}\le \la P_{n_*}u(t),\uhat(t)\ra_{H^1}
&=\la P_{n_*}u(t),A\uhat(t)\ra_{H}\\
&\le \tfrac{\alpha_{n_*}}{2}\|u(t)\|^2_H+\tfrac{1}{2\alpha_{n_*}}\|A\uhat(t)\|^2_H.
\end{align*}
It follows that
\begin{align*}
\tfrac{\d }{\d t}\E \|(\uhat(t),\etahat(t))\|^2_{\H^1}&+\tfrac{1}{2} K_1\E\|A\uhat(t)\|^2_{H}+\tfrac{\varepsilon_1\delta}{1+\varepsilon_1}\E\|\etahat(t)\|^2_{\M^1}\\
&\le \tfrac{a_\f^2}{K_1} \E\|\uhat(t)\|^2_{H}+\tfrac{K_1\alpha_{n_*}^2}{2}\E\|u(t)\|^2_H+\sum_{k\ge 1}\lambda_k^2\alpha_k.
\end{align*}
Recalling~\eqref{ineq:moment-bound:H} and~\eqref{ineq:u-uhat},
\begin{align*}
\tfrac{a_\f^2}{K_1} \E\|\uhat(t)\|^2_{H}+\tfrac{K_1\alpha_{n_*}^2}{2}\E\|u(t)\|^2_H&\le C\E\|u(t)\|^2_H+C\E\|\uhat(t)-u(t)\|^2_H\\
&\le C e^{-ct}\|\x\|^2_\H+C.
\end{align*}
Gronwall's inequality then implies
\begin{align*}
\E \|(\uhat(t),\etahat(t))\|^2_{\H^1}\le C(\|\x\|^2_\H+1).
\end{align*}
This establishes~\eqref{ineq:uhat}, thereby finishing the proof.
\end{proof}

\section{Geometric Ergodicity of \eqref{eqn:react-diff:K}} \label{sec:ergodicity}

As mentioned in Section~\ref{sec:intro:overview}, the proof of Theorem~\ref{thm:ergodicity} makes use of the \emph{generalized coupling} in \cite{butkovsky2020generalized,glatt2017unique,
hairer2011asymptotic,kulik2017ergodic, kulik2015generalized,
mattingly2002exponential}. The method is based on two important concepts: a suitable distance $d$ in $\H$ that is \emph{contracting} for the semigroup $P_t$ and a $d$-small set to which the Markov process $\Phi(t)$ returns often enough. Given these ingredients, one is able to conclude the existence and uniqueness of an invariant probability measure $\mu$. Moreover, the convergent rate toward $\mu$ can be quantified by the recurrence rate, i.e., how quickly the system returns to the $d$-small set \cite{butkovsky2020generalized,hairer2011asymptotic,
kulik2017ergodic}. In turn, this can be estimated via suitable Lyapunov functions. 

For the reader convenience, we recall the notions of \emph{contracting} distances, \emph{d-small} sets and Lyapunov functions \cite{butkovsky2020generalized,hairer2011asymptotic}.

\begin{definition} \label{def:contracting}
A distance-like function $d$ bounded by 1 is called \emph{contracting} for $P_t$ if there exists $\alpha<1$ such that for any $\x,\y\in \H$ with $d(\x,\y)<1$, it holds that
\begin{equation} \label{ineq:contracting:W_d<alpha.d}
\W_d(P_t(\x,\cdot),P_t(\y,\cdot))\le \alpha d(\x,\y).
\end{equation}
\end{definition}

\begin{definition} \label{def:d-small} A set $B\subset\H$ is called $d$-\emph{small} for $P_t$ if for some $\varepsilon=\varepsilon(B)>0$, 
\begin{equation} \label{ineq:d-small:W_d<1-epsilon}
\sup_{\x,\y\in B}\W_d(P_t(\x,\cdot),P_t(\y,\cdot))\le 1-\varepsilon.
\end{equation}

\end{definition}

\begin{definition}\label{def:Lyapunov}
A function $V:\H\to [0,\infty)$ is called a Lyapunov function for $P_t$ if

1. $V(\x)\to\infty$ as $\|\x\|_\H\to\infty$.

2. There exist positive constants $c,\,C$ independent of $\x$ and $t$ such that
\begin{align*}
P_tV(\x)+c\int_0^t P_sV(\x)\d s\le V(\x)+Ct,\quad t\ge 0.
\end{align*}

\end{definition}

In Lemma~\ref{lem:contracting-d-small} below, we assert the existence of a contracting distance $d$ and the corresponding $d$-small set. The proof of Lemma~\ref{lem:contracting-d-small} will be deferred to the end of this section.

\begin{lemma} \label{lem:contracting-d-small} Under the same hypothesis of Proposition~\ref{prop:well-posed}, suppose that Assumption~\ref{cond:ergodicity} holds. Then, for all $R$, there exist $t_*=t_*(R)>1$ and $N=N(R)>0$ such that for all $t\ge t_*$: 

\begin{enumerate}[noitemsep,topsep=0pt,wide=\parindent, label=\arabic*.,ref=\theassumption.\arabic*.]
\item \label{lem:contracting-d-small:contracting} The distance $d_N(\x,\y)=N\|\x-\y\|_\H\mi 1$ as in~\eqref{form:d_N} is contracting for $P_t$ in the sense of Definition~\ref{def:contracting}.

\item \label{lem:contracting-d-small:d-small} The set $B_R=\{\x:\|\x\|_\H\le R\}$ is $d_N$-small for $P_t$ in the sense of Definition~\ref{def:d-small}.
\end{enumerate}
\end{lemma}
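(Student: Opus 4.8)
The plan is to prove the two assertions of Lemma~\ref{lem:contracting-d-small} using the generalized coupling machinery, exploiting crucially that Assumption~\ref{cond:ergodicity} makes $\f$ dominated by the Laplacian plus the noise on the low modes. The natural coupling here is the \emph{asymptotic} (or \emph{nudging}) coupling: given initial data $\x=(u_0,\eta_0)$ and $\y=(v_0,\zeta_0)$, run the true solution $(u(t),\eta(t))$ from $\x$, and run a shifted copy $(v(t),\zeta(t))$ driven by the \emph{same} Wiener process $w$ but with an extra control $-\lambda P_{\nbar}(v(t)-u(t))\,\d t$ added to the $v$-equation, where $\lambda>0$ is a feedback gain to be tuned. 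By the Girsanov theorem, provided the control has finite energy almost surely (which will follow from the moment bounds of Lemma~\ref{lem:moment-bound:H} applied to the difference process), the law of the controlled trajectory $(v(t),\zeta(t))$ is equivalent to $P_t(\y,\cdot)$; this is where condition~\eqref{cond:Q:ergodicity}, i.e. $\|Qu\|_H\ge a_Q\|P_{\nbar}u\|_H$, enters, as it guarantees the control lies in the range of $Q$ and lets us bound its $H$-norm by the Brownian-motion density's Cameron–Martin norm.

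For the contracting property (item~1), I would set $\ubar=v-u$, $\etabar=\zeta-\eta$ and compute $\tfrac{\d}{\d t}\tfrac12\|(\ubar(t),\etabar(t))\|_\H^2$ exactly as in the proof of Lemma~\ref{lem:u.hat}: the memory terms and the transport term combine via~\eqref{eqn:<partial_s.eta,eta>}, \eqref{cond:rho}, and Young's inequality to give $-K_1\|\ubar\|_{H^1}^2-\tfrac{\varepsilon_1\delta}{2(1+\varepsilon_1)}\|\etabar\|_\M^2$, the nonlinearity contributes at most $a_\f\|\ubar\|_H^2$ by~\eqref{cond:phi:3}, and the feedback term contributes $-\lambda\|P_{\nbar}\ubar\|_H^2$. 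Splitting $\|\ubar\|_{H^1}^2\ge \|(I-P_{\nbar})\ubar\|_{H^1}^2\ge \alpha_{\nbar}\|(I-P_{\nbar})\ubar\|_H^2$ and choosing $\lambda$ large enough, condition~\eqref{cond:phi:ergodicity} ($a_\f<K_1'\alpha_{\nbar}$ with the relevant $K_1'$) yields a strict exponential decay $\|(\ubar(t),\etabar(t))\|_\H^2\le e^{-ct}\|\x-\y\|_\H^2$ almost surely. Feeding this into the dual Kantorovich formula~\eqref{form:W_d:dual-Kantorovich} for $\W_{d_N}$ with $d_N(\x,\y)=N\|\x-\y\|_\H\wedge1$, and absorbing the Girsanov cost by a standard argument (the coupling agrees with $P_t(\y,\cdot)$ in law up to an event of small probability controlled by the finite control energy, which is itself controlled on $B_R$ by the a priori bounds), gives $\W_{d_N}(P_t(\x,\cdot),P_t(\y,\cdot))\le \alpha\, d_N(\x,\y)$ for $t\ge t_*$ and suitable $N$, as long as $d_N(\x,\y)<1$, i.e. $\|\x-\y\|_\H<1/N$.

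For the $d_N$-small property (item~2), given $R$ I would show that two trajectories started anywhere in $B_R$ can be coupled so that with probability at least $\varepsilon>0$ they come within $\H$-distance $1/(2N)$ of each other by time $t_*$, and then invoke item~1 (the contracting bound) on the remaining time, or more simply combine the exponential squeezing of the difference process above with the Girsanov equivalence: the controlled copy reaches a small $\H$-ball around the true trajectory with positive probability uniformly over $\x,\y\in B_R$ because the control energy has a uniform-in-$B_R$ moment bound (Lemma~\ref{lem:moment-bound:H}), so the Radon–Nikodym density is bounded below in probability. This forces $\sup_{\x,\y\in B_R}\W_{d_N}(P_t(\x,\cdot),P_t(\y,\cdot))\le 1-\varepsilon$.

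The main obstacle I anticipate is the Girsanov/finite-energy step in the memory setting: one must verify that the feedback control $t\mapsto \lambda Q^{-1}P_{\nbar}(v(t)-u(t))$ (interpreted through~\eqref{cond:Q:ergodicity}) has almost surely finite $L^2(0,\infty)$-norm and, more delicately, that the associated exponential martingale is a genuine martingale with the required uniform-in-$B_R$ lower tail bounds — this needs the exponential moment bound~\eqref{ineq:exponential-bound:H} together with the pathwise decay~\eqref{ineq:u-uhat}-style estimate for the difference, and care is needed because $Q$ is only assumed to satisfy~\eqref{cond:Q:ergodicity} on the range of $P_{\nbar}$ rather than to be boundedly invertible. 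Once that technical point is handled, everything else is a direct adaptation of the contraction computations already carried out in Lemmas~\ref{lem:u.hat} and~\ref{lem:moment-bound:H}.
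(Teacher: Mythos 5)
Your proposal follows essentially the same generalized-coupling strategy as the paper: introduce a feedback-controlled (``shifted'') copy of the dynamics with a low-mode nudging term $K_2\alpha_{\nbar}P_{\nbar}(u_\x-\utilde)$, prove pathwise exponential contraction of the difference as in the proof of Lemma~\ref{lem:u.hat}, compare laws via Girsanov and a total-variation estimate, and then assemble the contracting and $d_N$-small bounds by the triangle inequality together with $\W_{d_N}\le\W_{\TV}$ (Lemma~\ref{lem:W_(d_N)<W_TV}). The logic of both items is correctly identified. However, your ``anticipated obstacle'' overstates the difficulty of the Girsanov step and points to tools that are not actually needed. The key simplification, which the paper exploits and you partially notice but do not commit to, is that the difference process satisfies a \emph{deterministic} pathwise contraction $\|\Phi_\x(t)-\Phitilde_{\x,\y}(t)\|_\H\le e^{-\zeta t}\|\x-\y\|_\H$ (Lemma~\ref{lem:dissipative-bound}). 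Consequently the drift shift $\beta(t)=K_2\alpha_{\nbar}Q^{-1}P_{\nbar}(u_\x(t)-\utilde(t))$ obeys the \emph{deterministic} bound $\int_0^\infty\|\beta(r)\|_H^2\,\d r\le\zeta_1^2\|\x-\y\|_\H^2$. This makes the Novikov condition automatic and gives uniform-in-$B_R$ bounds for free via the Cameron--Martin/total-variation inequalities of \cite[App.~A]{butkovsky2020generalized}; neither the exponential moment bound~\eqref{ineq:exponential-bound:H} nor the a priori bounds of Lemma~\ref{lem:moment-bound:H} enter this part of the argument (Lemma~\ref{lem:moment-bound:H} is used only later, to supply a Lyapunov function when invoking the abstract ergodic theorem). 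Likewise, the worry that $Q$ is ``only invertible on the range of $P_{\nbar}$'' is a non-issue: the control is by construction $Q^{-1}P_{\nbar}(\cdot)$, and~\eqref{cond:Q:1} together with~\eqref{cond:Q:ergodicity} gives $\lambda_k\ge a_Q$ for $k\le\nbar$, so $Q^{-1}$ is well-defined and bounded (by $a_Q^{-1}$) exactly where it is applied. With these clarifications your sketch closes cleanly and matches the paper's argument.
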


Given the above result, we can now conclude Theorem~\ref{thm:ergodicity}. See also \cite{butkovsky2020generalized,hairer2011asymptotic,kulik2017ergodic}.
\begin{proof}[Proof of Theorem~\ref{thm:ergodicity}]
We first recall $g(u,\eta)=\frac{1}{2}\|u\|^2_{H}+\frac{1}{2}\|\eta\|^2_{\M}$ as in~\eqref{form:g}. By integrating both sides of~\eqref{ineq:d.E[g(t)]} with respect to time, we obtain
\begin{align*}
\E g(u(t),\eta(t))+\int_0^t K_1\alpha_1\E\| u(r)\|^2_{H}+\tfrac{\varepsilon_1\delta}{2(1+\varepsilon_1)}\E\|\eta(r)\|^2_\M\d r\le \|\x\|^2_{\H}+\Big(a_3|\domain|+\tfrac{1}{2}\sum_{k\ge 1}\lambda_k^2\Big)t.
\end{align*}
So that $g(u(t),\eta(t))$ plays the role of the required Lyapunov function in the sense of Definition~\ref{def:Lyapunov}. In light of \cite[Proposition 2.1]{butkovsky2020generalized}, we combine the above estimate with Lemma~\ref{lem:contracting-d-small} to conclude the unique existence of $\mu$ as well as the convergent rate~\eqref{ineq:ergodicity:1}. 
\end{proof}

We now turn to the proof of Lemma~\ref{lem:contracting-d-small}. Before diving into the details, it is illuminating to review the \emph{generalized coupling} argument in \cite{butkovsky2020generalized,hairer2011asymptotic}. In order to establish required bounds on the Wassertstein distance $\W_{d_N}$ between $P_t(\x,\cdot)$ and $P_t(\y,\cdot)$, it is sufficient to compare the two solutions $\Phi_\x$ and $\Phi_\y$ of~\eqref{eqn:react-diff:K} starting from $\x$ and $\y$, respectively. However, we will not do so directly. To help with the analysis, we will consider instead an auxiliary system, denoted by $\Phitilde_{\x,\y}$, obtained from~\eqref{eqn:react-diff:K} by shifting the Wiener process $w(t)$ in a suitably chosen direction. We note that the change of measures is valid thanks to the assumption that noise is sufficiently forced in enough many directions  in $H$, cf. Assumption~\ref{cond:ergodicity}. As it turns out, the choice of $\Phitilde_{\x,\y}$ allows us to deduce two crucial estimates: firstly, $\Phi_\x(t)$ and $\Phitilde_{\x,\y}(t)$ can be arbitrarily close to one another as $t$ tends to infinity. Secondly, $\Phitilde_{\x,\y}(t)$ and $\Phi_\y(t)$ can be efffectively controlled. Altogether, we are able to conclude Lemma~\ref{lem:contracting-d-small}.

Recall from Assumption~\ref{cond:ergodicity} that
\begin{align*}
\big[ 1-\|K\|_{L^1(\rbb^+)} \big]\alpha_{\nbar}>a_\f=\sup_{x\in\rbb}\f'(x),
\end{align*}
where $\alpha_{\nbar}$ is as in~\eqref{cond:phi:ergodicity}. We set
\begin{equation} \label{form:eps_2.K_2}
\varepsilon_2=\frac{(1-\|K\|_{L^1(\rbb^+)})\alpha_{\nbar}-a_\f}{\|K\|_{L^1(\rbb^+)}\alpha_{\nbar}},\quad\text{and}\quad K_2 =  1-(1+\tfrac{\varepsilon_2}{2})\|K\|_{L^1(\rbb^+)}.
\end{equation}
Observe that $\varepsilon_2$ and $K_2$ are both positive by virtue of~\eqref{cond:phi:ergodicity}. Moreover, 
\begin{align}\label{cond:K_2.alpha_nbar>a_phi}
K_2\alpha_{\nbar}=\tfrac{1}{2}\big( \big[1-\|K\|_{L^1(\rbb^+)}\big]\alpha_{\nbar}+a_\f\big) >a_\f.
\end{align}
Next, we introduce the following ``shifted" system
\begin{align}
\d \utilde(t)&=-A \utilde(t)\d t +\int_{0}^{\infty}\close K(s) A\etatilde(t;s)\d s\d t+\f(\utilde(t))\d t+Q\d w(t)\notag\\
&\qquad\qquad+K_2\alpha_{\nbar} P_{\nbar}(u_\x(t)-\utilde(t))\d t,\label{eqn:react-diff:K:u.tilde}\\
\d \etatilde(t)&= -\partial_s\etatilde(t)\d t,\quad (\utilde(0),\etatilde(0))=\y\in\H,\quad \etatilde(t;0)=\utilde(t),\,t>0,\notag
\end{align}
where $K_2$ is as in~\eqref{form:eps_2.K_2}, $P_{\nbar}$ is the projection on to $\{e_1,\dots,e_{\nbar}\}$ as in~\eqref{form:P_n.u}, and $u_\x(t)$ is the $u$-component in the solution $\Phi_\x(t)$ of the original equation~\eqref{eqn:react-diff:K}. Similarly to~\eqref{eqn:react-diff:K:u.hat}, we note that~\eqref{eqn:react-diff:K:u.tilde} only differs from~\eqref{eqn:react-diff:K} by the appearance of the term $K_2\alpha_{\nbar} P_{\nbar}(u(t)-\utilde(t))\d t$. For notational convenience, we denote $$\Phitilde_{\x,\y}(t)=(\utilde_{\x,\y}(t),\etatilde_{\x,\y}(t)),$$ the solution of~\eqref{eqn:react-diff:K:u.tilde} with initial condition $\y\in\H$. 

Three of the main ingredients in the generalized coupling argument are given in the following results to be
proved at the end of this section.
\begin{lemma} \label{lem:dissipative-bound} 
Under the same hypothesis of Proposition~\ref{prop:well-posed}, suppose that Assumption~\ref{cond:ergodicity} holds. Let $\Phi_\x$ and $\Phitilde_{\x,\y}$ respectively be the solutions of~\eqref{eqn:react-diff:K} and~\eqref{eqn:react-diff:K:u.tilde} with initial conditions $\x,\,\y\in\H$. Then, 
\begin{equation}\label{ineq:dissipative-bound}
\|\Phi_\x(t)-\Phitilde_{\x,\y}(t)\|_\H\le e^{-\zeta t} \|\x-\y\|_\H ,\quad t>0,
\end{equation}
for some positive $\zeta$ independent of $\x,\,\y$ and $t$.
\end{lemma}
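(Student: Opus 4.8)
The plan is to set $\ubar = u_\x - \utilde_{\x,\y}$ and $\etabar = \eta_\x - \etatilde_{\x,\y}$, observe that the noise and the $\f(u_\x)$--type forcing cancel only partially, and track the evolution of $g(\ubar,\etabar) = \tfrac12\|\ubar\|_H^2 + \tfrac12\|\etabar\|_\M^2$ as in~\eqref{form:g}. Subtracting~\eqref{eqn:react-diff:K:u.tilde} from the $u$--component of~\eqref{eqn:react-diff:K}, the cylindrical Wiener term $Q\,\d w(t)$ cancels exactly, so $(\ubar(t),\etabar(t))$ solves a \emph{deterministic} equation with random coefficients, just as in the proof of Lemma~\ref{lem:u.hat}:
\begin{align*}
\tfrac{\d}{\d t}\ubar(t) &= -A\ubar(t) + \int_0^\infty K(s) A\etabar(t;s)\,\d s + \f(u_\x(t)) - \f(\utilde(t)) - K_2\alpha_{\nbar} P_{\nbar}\ubar(t),\\
\tfrac{\d}{\d t}\etabar(t) &= -\partial_s\etabar(t),\quad \etabar(t;0) = \ubar(t).
\end{align*}

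Then I would differentiate $g(\ubar(t),\etabar(t))$ in time. The linear part $-\|\ubar\|_{H^1}^2 + \int_0^\infty K(s)\la\etabar(s),\ubar\ra_{H^1}\d s + \la-\partial_s\etabar,\etabar\ra_\M$ is estimated exactly as in~\eqref{ineq:u+int.K<eta,u>:H1}, giving an upper bound $-K_1\|\ubar\|_{H^1}^2 - \tfrac{\varepsilon_1\delta}{2(1+\varepsilon_1)}\|\etabar\|_\M^2$; since $K_1 = \tfrac12(1-\|K\|_{L^1})$ this is already a dissipative bound, but the constant is not quite strong enough to absorb the nonlinear term, so instead I split using the orthogonal decomposition $\ubar = P_{\nbar}\ubar + (I-P_{\nbar})\ubar$ and write the linear contribution as $\le -K_2\|(I-P_{\nbar})\ubar\|_{H^1}^2 - \tfrac{\varepsilon_2}{2}\|P_{\nbar}\ubar\|_{H^1}^2\cdot(\text{something})$—more carefully, I replace $\varepsilon_1,K_1$ by $\varepsilon_2,K_2$ from~\eqref{form:eps_2.K_2} in the Young's inequality step, arriving at $-K_2\|\ubar\|_{H^1}^2 - \tfrac{\varepsilon_2\delta}{2(1+\varepsilon_2)}\|\etabar\|_\M^2$ and then using $\|\ubar\|_{H^1}^2 \ge \alpha_{\nbar}\|(I-P_{\nbar})\ubar\|_H^2 + \|P_{\nbar}\ubar\|_{H^1}^2$ on the high modes. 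The nonlinear term is controlled by~\eqref{cond:phi:3}: $\la\f(u_\x)-\f(\utilde),\ubar\ra_H \le a_\f\|\ubar\|_H^2 = a_\f\|P_{\nbar}\ubar\|_H^2 + a_\f\|(I-P_{\nbar})\ubar\|_H^2$. The damping term contributes exactly $-K_2\alpha_{\nbar}\|P_{\nbar}\ubar\|_H^2$. Collecting terms, the low--mode coefficient is $a_\f - K_2\alpha_{\nbar} < 0$ by~\eqref{cond:K_2.alpha_nbar>a_phi}, while the high--mode coefficient is $a_\f - K_2\alpha_{\nbar}$ as well (after using $K_2\|(I-P_{\nbar})\ubar\|_{H^1}^2 \ge K_2\alpha_{\nbar}\|(I-P_{\nbar})\ubar\|_H^2$), which is strictly negative; together with the $\|\etabar\|_\M^2$ term this yields
\begin{align*}
\tfrac{\d}{\d t}g(\ubar(t),\etabar(t)) \le -2\zeta\, g(\ubar(t),\etabar(t))
\end{align*}
for some $\zeta > 0$ depending only on $a_\f$, $\|K\|_{L^1}$, $\alpha_{\nbar}$, $\delta$. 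Gronwall's inequality and $g(\ubar(0),\etabar(0)) = \tfrac12\|\x-\y\|_\H^2$ then give~\eqref{ineq:dissipative-bound}.

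The computation is essentially a variant of the proof of~\eqref{ineq:u-uhat} in Lemma~\ref{lem:u.hat}, with $K_2\alpha_{\nbar}P_{\nbar}$ replacing $K_1\alpha_{n_*}P_{n_*}$ and with the crucial difference that the damping acts with the \emph{opposite sign} relative to the auxiliary system (here it pulls $\utilde$ toward $u_\x$, so the sign works in our favor in the energy estimate). The main—and really only—obstacle is bookkeeping: making sure the projected and unprojected pieces of $\|\ubar\|_{H^1}^2$ are distributed so that \emph{both} the $P_{\nbar}$ and $(I-P_{\nbar})$ coefficients come out negative simultaneously, which is exactly what the specific choice of $\varepsilon_2, K_2$ in~\eqref{form:eps_2.K_2} and the inequality~\eqref{cond:K_2.alpha_nbar>a_phi} are engineered to guarantee; no compactness or probabilistic input is needed since the Wiener terms cancel identically.
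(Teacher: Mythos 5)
Your proposal is correct and follows essentially the same route as the paper's proof: subtract to obtain a deterministic equation for $(\ubar,\etabar)$, run the $\H$-energy estimate with the $\varepsilon_2,K_2$ constants in the Young's inequality step, split $\ubar$ into $P_{\nbar}\ubar+(I-P_{\nbar})\ubar$ so the damping term kills the low modes and the Laplacian (via $\alpha_{\nbar}$) kills the high modes, absorb the nonlinearity using $a_\f<K_2\alpha_{\nbar}$ from~\eqref{cond:K_2.alpha_nbar>a_phi}, and close with Gronwall. One minor remark: your parenthetical about the damping acting with ``opposite sign'' relative to Lemma~\ref{lem:u.hat} is a misreading---in both~\eqref{eqn:react-diff:K:u.hat} and~\eqref{eqn:react-diff:K:u.tilde} the extra term pulls the auxiliary variable toward the reference solution, and after forming the difference the feedback term has the same dissipative sign in both proofs.
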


\begin{lemma} \label{lem:error-in-law} Under the same hypothesis of Proposition~\ref{prop:well-posed}, suppose that Assumption~\ref{cond:ergodicity} holds. Let $\Phi_\x$ and $\Phitilde_{\x,\y}$ respectively be the solutions of~\eqref{eqn:react-diff:K} and~\eqref{eqn:react-diff:K:u.tilde} with initial conditions $\x,\,\y\in\H$. Then,
there exists a positive constant $\zeta_1$ independent of $t$, $\x$ and $\y$ such that
\begin{equation} \label{ineq:error-in-law:1}
\W_{\emph{TV}}(\emph{Law}(\Phitilde_{\x,\y}(t)),P_t(\y,\cdot))\le \zeta_1\|\x-\y\|.
\end{equation}
Furthermore, for all $R>0$, there exists $\varepsilon=\varepsilon(R)\in(0,1)$ such that
\begin{equation}\label{ineq:error-in-law:2}
\W_\emph{TV}(\emph{Law}(\Phitilde_{\x,\y}(t)),P_t(\y,\cdot))\le 1-\varepsilon, \quad
\x,\,\y\in B_R.
\end{equation}
\end{lemma}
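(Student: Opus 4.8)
The plan is to realize the shifted system~\eqref{eqn:react-diff:K:u.tilde} as the same SPDE~\eqref{eqn:react-diff:K} driven by a \emph{shifted} Wiener process, and then apply Girsanov's theorem. Writing $\Phitilde_{\x,\y}(t)=(\utilde(t),\etatilde(t))$, observe that the extra drift $K_2\alpha_{\nbar}P_{\nbar}(u_\x(t)-\utilde(t))$ lives in the range of $P_{\nbar}$, hence — thanks to the non-degeneracy condition~\eqref{cond:Q:ergodicity}, $\|Qu\|_H\ge a_Q\|P_{\nbar}u\|_H$ — it can be written as $Q h(t)$ for an adapted process $h(t)\in H$ with $\|h(t)\|_H\le (K_2\alpha_{\nbar}/a_Q)\|P_{\nbar}(u_\x(t)-\utilde(t))\|_H$. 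First I would set $\wtilde(t)=w(t)+\int_0^t h(s)\,\d s$, so that $\Phitilde_{\x,\y}$ solves~\eqref{eqn:react-diff:K} verbatim but with $w$ replaced by $\wtilde$ and initial datum $\y$; consequently, under the measure $\widetilde{\P}$ defined via the Girsanov density
\begin{equation*}
\frac{\d\widetilde{\P}}{\d\P}\Big|_{\Fcal_t}=\exp\Big(-\int_0^t\la h(s),\d w(s)\ra_H-\tfrac12\int_0^t\|h(s)\|_H^2\,\d s\Big),
\end{equation*}
the process $\Phitilde_{\x,\y}$ has the same law as $\Phi_\y$, i.e.\ as $P_t(\y,\cdot)$.

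The next step is the standard estimate bounding total variation distance by the relative entropy / Girsanov density. By Pinsker's inequality (or the elementary bound $\W_{\TV}(\Law_{\P}(\cdot),\Law_{\widetilde\P}(\cdot))\le\sqrt{\tfrac12\,\E_{\P}[\,\log(\d\P/\d\widetilde\P)\,]}\,$ applied along the usual lines), it suffices to control $\E\int_0^\infty\|h(s)\|_H^2\,\d s$. Here is where the dissipative bound of Lemma~\ref{lem:dissipative-bound} enters decisively: since $\|h(s)\|_H\le C\|P_{\nbar}(u_\x(s)-\utilde(s))\|_H\le C\|\Phi_\x(s)-\Phitilde_{\x,\y}(s)\|_\H\le Ce^{-\zeta s}\|\x-\y\|_\H$, we get
\begin{equation*}
\E\int_0^\infty\|h(s)\|_H^2\,\d s\le C\|\x-\y\|_\H^2\int_0^\infty e^{-2\zeta s}\,\d s=\frac{C}{2\zeta}\|\x-\y\|_\H^2,
\end{equation*}
which is finite and, crucially, \emph{independent of $t$}. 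Feeding this into the TV bound yields $\W_{\TV}(\Law(\Phitilde_{\x,\y}(t)),P_t(\y,\cdot))\le \zeta_1\|\x-\y\|_\H$ with $\zeta_1$ depending only on $C,\zeta$, which is~\eqref{ineq:error-in-law:1}. For~\eqref{ineq:error-in-law:2}, restrict to $\x,\y\in B_R$: then $\|\x-\y\|_\H\le 2R$, but that only gives a bound of the form $2R\zeta_1$, not necessarily below $1$. To fix this, I would not use the crude $\sup$ over $t\ge t_*$; instead, since the right-hand side $\zeta_1\|\x-\y\|_\H$ is uniform in $t$, one argues that the TV distance is always bounded by $1$ and by $\zeta_1\|\x-\y\|_\H$, so $\W_{\TV}\le\min\{1,\zeta_1\cdot 2R\}$. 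When $2R\zeta_1<1$ this already gives $1-\varepsilon$; for general $R$ one must sharpen, e.g.\ by decomposing the Girsanov estimate on $B_R$ more carefully, or (as is typical in this literature) by first contracting via the $\H$-norm using Lemma~\ref{lem:moment-bound:H} before applying the entropy bound — shrinking the effective separation so that the resulting constant times $\mathrm{diam}(B_R)$ falls below $1$.

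The main obstacle I anticipate is precisely this last point: passing from the linear-in-$\|\x-\y\|_\H$ bound~\eqref{ineq:error-in-law:1} to a bound strictly below $1$ on all of $B_R$. The linear estimate alone is useless when $R$ is large, so one genuinely needs to exploit the \emph{exponential} decay in Lemma~\ref{lem:dissipative-bound} together with the dissipativity in $\H$ from Lemma~\ref{lem:moment-bound:H}: run the coupling long enough (choosing $t_*$, hence $N$, depending on $R$) that $\Phi_\x$ and $\Phitilde_{\x,\y}$ — and separately $\Phi_\x$ and $\Phi_\y$ — have contracted to a neighborhood of size $O(1)$ independent of $R$, and only then invoke the Girsanov/Pinsker estimate whose cost $\int_0^\infty e^{-2\zeta s}\,\d s$ is finite regardless. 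A secondary technical nuisance is the rigorous justification of Girsanov's theorem in this infinite-dimensional, non-Markovian memory setting: one must check the Novikov-type integrability condition for the density, which again follows from the exponential bound on $h$ plus the a priori moment estimates of Section~\ref{sec:moment-bound:solution}, but it requires a localization/stopping-time argument to make fully precise.
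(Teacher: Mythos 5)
Your treatment of the first bound~\eqref{ineq:error-in-law:1} is correct and matches the paper's argument: both realize the shifted system~\eqref{eqn:react-diff:K:u.tilde} as the original equation driven by $\wtilde(t)=w(t)+\int_0^t \beta(s)\,\d s$ with $\beta(t)=K_2\alpha_{\nbar}Q^{-1}P_{\nbar}(u_\x(t)-\utilde(t))$, use the non-degeneracy condition~\eqref{cond:Q:ergodicity} together with the exponential decay from Lemma~\ref{lem:dissipative-bound} to control $\E\int_0^\infty\|\beta(r)\|_H^2\,\d r\le \zeta_1^2\|\x-\y\|^2_\H$, apply a Pinsker-type inequality, and reduce $\W_{\TV}(\Law(\Phitilde_{\x,\y}(t)),P_t(\y,\cdot))$ to $\W_{\TV}(\Law(\wtilde_{[0,t]}),\Law(w_{[0,t]}))$ via uniqueness of weak solutions.

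However, there is a genuine gap in your handling of~\eqref{ineq:error-in-law:2}, and the fix you sketch would not work as stated. You correctly observe that the linear bound $\zeta_1\|\x-\y\|_\H\le 2\zeta_1 R$ is useless for large $R$, but the proposed remedy — waiting until $\Phi_\x$ and $\Phi_\y$ have ``contracted to a neighborhood of size $O(1)$'' — has no foundation: Lemma~\ref{lem:moment-bound:H} only gives dissipativity of each trajectory toward a bounded region in mean, not any pathwise contraction between $\Phi_\x$ and $\Phi_\y$ started from distinct initial data (indeed, were such contraction available there would be nothing left to prove). The ingredient the paper actually uses, and which your plan is missing, is a complementary total-variation/relative-entropy bound (cited there as \cite[Inequality~(A.2)]{butkovsky2020generalized}) of the form
\begin{align*}
\W_{\TV}\bigl(\Law(\wtilde_{[0,t]}),\Law(w_{[0,t]})\bigr)\;\le\; 1-\tfrac{1}{2}\exp\Bigl(-\tfrac{1}{2}\,\E\int_0^\infty\!\|\beta(r)\|_H^2\,\d r\Bigr),
\end{align*}
which is always strictly below $1$ whenever the Girsanov cost is finite, no matter how large. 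Combined with $\E\int_0^\infty\|\beta\|_H^2\,\d r\le 4\zeta_1^2 R^2$ on $B_R$, this gives~\eqref{ineq:error-in-law:2} with $\varepsilon(R)=\tfrac12 e^{-2\zeta_1^2 R^2}$, uniformly in $t$ and with no additional dynamical argument. In short: the missing step was not a preliminary contraction phase but the exponential-saturation form of the Girsanov--entropy estimate.
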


\begin{lemma} \label{lem:W_(d_N)<W_TV}
For all probability measures $\mu_1$, $\mu_2$ in $\emph{Pr}(\H)$, and $N>0$, 
\begin{equation} \label{ineq:W_(d_N)<W_TV}
\W_{d_N}(\mu_1,\mu_2)<\W_{\emph{TV}}(\mu_1,\mu_2),
\end{equation}
where $\W_{d_N}$ is the Wasserstein distance associated with $d_N$ as in~\eqref{form:d_N}.
\end{lemma}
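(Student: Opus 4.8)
The plan is to obtain the estimate directly from the dual Kantorovich formula~\eqref{form:W_d:dual-Kantorovich} together with an elementary pointwise comparison of the two underlying cost functions. Write $d(\x,\y)=\mathbf{1}\{\x\neq\y\}$ for the discrete metric, so that $\W_{\TV}=\W_d$ by definition. The key observation is the pointwise bound
\[
d_N(\x,\y)=N\|\x-\y\|_\H\wedge 1\le \mathbf{1}\{\x\neq\y\}=d(\x,\y),\qquad \x,\y\in\H,
\]
which is immediate: both sides vanish when $\x=\y$, while for $\x\neq\y$ the left-hand side is at most $1$.

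Given this, I would fix an arbitrary coupling, i.e.\ a pair of $\H$-valued random variables $(X,Y)$ on a common probability space with $X\sim\mu_1$ and $Y\sim\mu_2$, and take expectations in the displayed inequality to obtain $\E\,d_N(X,Y)\le\E\,d(X,Y)$. Since the coupling was arbitrary, taking the infimum over all couplings on each side and applying~\eqref{form:W_d:dual-Kantorovich} to both $d_N$ and $d$ yields
\[
\W_{d_N}(\mu_1,\mu_2)=\inf_{(X,Y)}\E\,d_N(X,Y)\le\inf_{(X,Y)}\E\,d(X,Y)=\W_{\TV}(\mu_1,\mu_2).
\]
For the strict inequality I would then pass to a maximal coupling $(X^*,Y^*)$ realizing $\P(X^*\neq Y^*)=\W_{\TV}(\mu_1,\mu_2)$, so that $\W_{d_N}(\mu_1,\mu_2)\le\E\big[(N\|X^*-Y^*\|_\H\wedge 1)\,\mathbf{1}\{X^*\neq Y^*\}\big]$, and the integrand is strictly below $\mathbf{1}\{X^*\neq Y^*\}$ on the portion of the event $\{X^*\neq Y^*\}$ on which $\|X^*-Y^*\|_\H<1/N$.

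The proof involves no substantial obstacle: the whole content is the pointwise domination $d_N\le d$ and the monotonicity of the Wasserstein distance in its cost, both of which are standard. The only point requiring a little care is the passage from ``$\le$'' to the strict inequality as stated, for which one restricts to $\mu_1\neq\mu_2$ and uses the maximal coupling as above; in any case only the non-strict bound $\W_{d_N}\le\W_{\TV}$ is actually invoked in Lemma~\ref{lem:contracting-d-small} and Theorem~\ref{thm:ergodicity}, so this is not essential to the rest of the argument.
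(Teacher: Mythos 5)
Your main argument is exactly the paper's: the pointwise domination $d_N(\x,\y)=N\|\x-\y\|_\H\wedge 1\le \mathbf{1}\{\x\neq\y\}$ combined with the coupling characterization~\eqref{form:W_d:dual-Kantorovich} of both Wasserstein distances, giving $\W_{d_N}(\mu_1,\mu_2)\le \W_{\TV}(\mu_1,\mu_2)$; the paper's proof splits $\E\,d_N(X,Y)$ over $\{X=Y\}$ and $\{X\neq Y\}$, which is the same computation. Note that the paper's own proof also establishes only the non-strict bound, and only that bound is invoked in Lemma~\ref{lem:contracting-d-small}, so the ``$<$'' in the statement should be read as ``$\le$''.

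Your attempted upgrade to a strict inequality does not go through, and indeed the strict statement is false in general: strictness would require the maximal coupling to put positive mass on $\{0<\|X^*-Y^*\|_\H<1/N\}$, which need not happen. For instance, if $\mu_1=\delta_{\x}$ and $\mu_2=\delta_{\y}$ with $\|\x-\y\|_\H\ge 1/N$, then $\W_{d_N}(\mu_1,\mu_2)=1=\W_{\TV}(\mu_1,\mu_2)$. Since you correctly observe that only $\W_{d_N}\le\W_{\TV}$ is used downstream, this defect is immaterial to the rest of the argument, but you should simply drop the strictness claim rather than try to prove it.
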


Assuming the above results, we are ready to conclude Lemma~\ref{lem:contracting-d-small} whose argument is based on \cite[Proof of Theorem 2.4]{butkovsky2020generalized}. See also \cite{hairer2011asymptotic}.

\begin{proof}[Proof of Lemma~\ref{lem:contracting-d-small}]
1. Suppose $\x,\y\in \H$ such that $d_N(\x,\y)<1$. Recalling $d_N$ as in~\eqref{form:d_N}, this implies that $$d_N(\x,\y)=N\|\x-\y\|_\H<1.$$
By triangle inequality, for all $t\ge 0$,
\begin{equation} \label{ineq:W_(d_N):triangle}
\begin{aligned}
\W_{d_N}(P_t(\x,\cdot),P_t(\y,\cdot))&\le \W_{d_N}(P_t(\x,\cdot),\Law(\Phitilde_{\x,\y}(t)))+\W_{d_N}(\Law(\Phitilde_{\x,\y}(t)),P_t(\y,\cdot))\\&=I_1+I_2.
\end{aligned}
\end{equation}
In view of~\eqref{ineq:error-in-law:1} and~\eqref{ineq:W_(d_N)<W_TV}, we readily have
\begin{align*}
I_2=\W_{d_N}(\Law(\Phitilde_{\x,\y}(t)),P_t(\y,\cdot))\le \zeta_1\|\x-\y\|_\H,
\end{align*}
where $\zeta_1$ is given by~\eqref{form:zeta_1} below. Concerning $I_1$, by the dual formula~\eqref{form:W_d:dual-Kantorovich}, it holds that
\begin{align}
I_1=\W_{d_N}(P_t(\x,\cdot),\Law(\Phitilde_{\x,\y}(t))) &\le \E \big[N\|\Phi_\x(t)-\Phitilde_{\x,\y}(t)\|_\H\mi 1\big]\nonumber\\
&\le N\E \|\Phi_\x(t)-\Phitilde_{\x,\y}(t)\|_\H \le Ne^{-\zeta t}\|\x-\y\|_\H.\label{ineq:W_(d_N):I_1}
\end{align}
In the last estimate above, we made use of~\eqref{ineq:dissipative-bound} with $\zeta$ as in~\eqref{form:zeta} below. Altogether, we deduce the bound for all $t\ge 1$
\begin{align*}
\W_{d_N}(P_t(\x,\cdot),P_t(\y,\cdot)) \le \big(e^{-\zeta t}+\tfrac{\zeta_1}{N}\big)N\|\x-\y\|_\H\le \big(e^{-\zeta }+\tfrac{\zeta_1}{N}\big)N\|\x-\y\|_\H=\big(e^{-\zeta }+\tfrac{\zeta_1}{N}\big)d_N(\x,\y).
\end{align*}
By choosing $N$ large enough such that 
\begin{equation}\label{cond:N}
\alpha:= e^{-\zeta}+\tfrac{\zeta_1}{N}<1,
\end{equation}
we obtain
\begin{align*}
\W_{d_N}(P_t(\x,\cdot),P_t(\y,\cdot))\le \alpha d_N(\x,\y),
\end{align*}
which establishes that $d_N$ is contracting for $P_t$ as claimed.

2. Similarly to part 1., for all $\x,\y\in B_R$, we invoke~\eqref{ineq:error-in-law:2} and~\eqref{ineq:W_(d_N)<W_TV} to see that
\begin{align*}
\W_{d_N}(\Law(\Phitilde_{\x,\y}(t)),P_t(\y,\cdot))\le 1-\varepsilon,
\end{align*}
where $\varepsilon=\varepsilon(R)$ does not depend on $t,\x,\y$. Together with~\eqref{ineq:W_(d_N):triangle}-\eqref{ineq:W_(d_N):I_1}, we infer the bound
\begin{align*}
\W_{d_N}(P_t(\x,\cdot),P_t(\y,\cdot))&\le Ne^{-\zeta t}\|\x-\y\|_\H+1-\varepsilon\le 1-\varepsilon+2RNe^{-\zeta t}.
\end{align*}
By choosing $t_*=t_*(R)$ large enough such that
\begin{equation} \label{cond:t_*}
2RNe^{-\zeta t}<\tfrac{\varepsilon}{2},
\end{equation}
we obtain 
\begin{align*}
\W_{d_N}(P_t(\x,\cdot),P_t(\y,\cdot)) \le 1-\tfrac{\varepsilon}{2},\quad t\ge t_*,\, \x,\y\in B_R,
\end{align*}
which establishes that $B_R$ is $d_N$-small for $P_t$. The proof is thus complete.
\end{proof}

We now turn to the auxiliary results in Lemmas~\ref{lem:dissipative-bound}--\ref{lem:error-in-law}--\ref{lem:W_(d_N)<W_TV}. To prove Lemma~\ref{lem:dissipative-bound}, we will mainly invoke the choice of the index $\nbar$ as in~\eqref{cond:phi:ergodicity} to derive the exponential estimate~\eqref{ineq:dissipative-bound}. In turn, we will combine~\eqref{ineq:dissipative-bound} with the fact that $Q$ is invertible in span$\{e_1,\dots,e_{\nbar}\}$, cf.~\eqref{cond:Q:ergodicity}, to conclude Lemma~\ref{lem:error-in-law}. Finally, the proof of Lemma~\ref{lem:W_(d_N)<W_TV} is quite standard relying on the fact that the discrete metric $\mathbf{1}\{\x\neq \y\}$ dominates $d_N(\x,\y)$.

We first provide the proof of Lemma~\ref{lem:dissipative-bound} whose argument is similarly to that of Lemma~\ref{lem:u.hat}.

\begin{proof}[Proof of Lemma~\ref{lem:dissipative-bound}] To simplify notation, we will omit the subscripts $\x,\,\y$ in the proof.

For a slightly abuse of notation, we set $\ubar=u-\utilde$ and $\etabar=\eta-\etatilde$, from~\eqref{eqn:react-diff:K:u.tilde} and~\eqref{eqn:react-diff:K}, and observe that 
\begin{equation} \label{eqn:react-diff:K:u.bar}
\begin{aligned}
\tfrac{\d}{\d t} \ubar(t)&=-A \ubar(t) +\int_{0}^{\infty}\close K(s) A\etabar(t;s)\d s+\f(u(t))-\f(\utilde(t))-K_2\alpha_{\nbar} P_{\nbar}\ubar(t),\\
\tfrac{\d}{\d t} \etabar(t)&= -\partial_s\etabar(t).
\end{aligned}
\end{equation}
Following the same argument as in the proofs of Lemma~\ref{lem:moment-bound:H} and~\eqref{ineq:u-uhat}, equation~\eqref{eqn:react-diff:K:u.bar} implies (recalling $g$ as in~\eqref{form:g})
\begin{align*}
\tfrac{\d }{\d t}g(\ubar(t),\etabar(t))&=-\|\ubar(t)\|^2_{H^1}+ \int_0^\infty\close  K(s)\la \etabar(t;s),\ubar(t)\ra_{H^1}\d s+\la -\partial_s\etabar(t),\etabar(t)\ra_\M\\
&\qquad +\la\f(u(t))-\f(\utilde(t)),\ubar(t)\ra_H -K_2\alpha_{\nbar}\|P_{\nbar}\ubar(t)\|^2_H.
\end{align*}
Recalling $\rho(r)=\int_t^\infty K(s)\d s$,
\begin{align*}
\int_0^\infty\close  K(s)\la \etabar(t;s), \ubar(t)\ra_{H^1}\d s&\le \tfrac{1+\varepsilon_2}{2}\|K\|_{L^1(\rbb^+)}\| \ubar(t)\|^2_{H^1}+\tfrac{1}{2(1+\varepsilon_2)}\int_0^\infty\close  K(s)\|\etabar(t;s)\|^2_{H^1}\d s\\
&= \tfrac{1+\varepsilon_2}{2}\|K\|_{L^1(\rbb^+)}\| \ubar(t)\|^2_{H^1}-\tfrac{1}{2(1+\varepsilon_2)}\int_0^\infty\close  \rho'(s)\|\eta(t;s)\|^2_{H^1}\d s,
\end{align*}
where $\varepsilon_2>0$ is given by~\eqref{form:eps_2.K_2}. Also, recalling~\eqref{eqn:<partial_s.eta,eta>},
\begin{align*}
\la -\partial_s\etabar(t),\etabar(t)\ra_\M &= \tfrac{1}{2}\|K\|_{L^1(\rbb)}\|\ubar(t)\|^2_{H^1}+\tfrac{1}{2}\int_0^\infty\close \rho'(s)\|\etabar(t;s)\|^2_{H^1}\d s.
\end{align*}
We then deduce that
\begin{align*}
 &-\|\ubar(t)\|^2_{H^1}+ \int_0^\infty\close  K(s)\la \etabar(t;s),\ubar(t)\ra_{H^1}\d s+\la -\partial_s\etabar,\etabar\ra_\M \nonumber\\
 &\le -\Big[1-\big(1+\tfrac{\varepsilon_2}{2}\big)\|K\|_{L^1(\rbb^+)}\Big]\| u(t)\|^2_{H^1}+\tfrac{\varepsilon_2}{2(1+\varepsilon_2)}\int_0^\infty\close  \rho'(s)\|\eta(t;s)\|^2_{H^1}\d s\nonumber\\
 &\le -\Big[1-\big(1+\tfrac{\varepsilon_2}{2}\big)\|K\|_{L^1(\rbb^+)}\Big]\| u(t)\|^2_{H^1}-\tfrac{\varepsilon_2\delta}{2(1+\varepsilon_2)}\int_0^\infty\close  \rho(s)\|\eta(t;s)\|^2_{H^1}\d s\nonumber\\
 &= -K_2\| u(t)\|^2_{H^1}-\tfrac{\varepsilon_2\delta}{2(1+\varepsilon_2)}\|\eta(t)\|^2_\M  \\
 &\le -K_2\|(I-P_{\nbar}) \ubar(t)\|^2_{H^1}-\tfrac{\varepsilon_2\delta}{2(1+\varepsilon_2)}\|\etabar(t)\|^2_\M \\
 &\le -K_2\alpha_{\nbar}\|(I-P_{\nbar}) \ubar(t)\|^2_{H}-\tfrac{\varepsilon_2\delta}{2(1+\varepsilon_2)}\|\etabar(t)\|^2_\M,
\end{align*}
whence 
\begin{align*}
&-\|\ubar(t)\|^2_{H^1}+ \int_0^\infty\close  K(s)\la \etabar(t;s),\ubar(t)\ra_{H^1}\d s+\la -\partial_s\etabar,\etabar\ra_\M-K_2\alpha_{\nbar}\|P_{\nbar}\ubar(t)\|^2_H\\
&\le -K_2\alpha_{\nbar}\|\ubar(t)\|^2_{H}-\tfrac{\varepsilon_2\delta}{2(1+\varepsilon_2)}\|\etabar(t)\|^2_\M.
\end{align*}
To control the nonlinear term, we invoke condition~\eqref{cond:phi:ergodicity} to infer
\begin{align*}
\la\f(u(t))-\f(\utilde(t)),\ubar(t)\ra_H \le a_\f\|\ubar(t)\|^2_H.
\end{align*}
Altogether, we obtain
\begin{align} \label{ineq:ubar+etabar}
\tfrac{1}{2}\tfrac{\d}{\d t}\big(\|\ubar(t)\|^2_H+\|\etabar(t)\|^2_\M\big)&\le -(K_2\alpha_{\nbar}-a_\f)\|\ubar(t)\|^2_H-\tfrac{\varepsilon_2}{2(1+\varepsilon_2)}\delta  \|\etabar(t)\|^2_\M,
\end{align}
where $K_2$ and $\varepsilon_2$ are as in~\eqref{form:eps_2.K_2}. Thanks to~\eqref{cond:phi:ergodicity}, the choice of $K_2$ satisfies~\eqref{cond:K_2.alpha_nbar>a_phi}, namely, $$K_2\alpha_{\nbar}>a_\f.$$ We now set
\begin{equation} \label{form:zeta}
\zeta= \min\{2(K_2\alpha_{\nbar}-a_\f),\tfrac{\varepsilon_2}{1+\varepsilon_2}\delta\},
\end{equation}
which is positive. Estimate~\eqref{ineq:dissipative-bound} now follows from~\eqref{ineq:ubar+etabar}-\eqref{form:zeta}.

\end{proof}

Next, we present the proof of Lemma~\ref{lem:error-in-law}. 

\begin{proof}[Proof of Lemma~\ref{lem:error-in-law}] 
In order to prove~\eqref{ineq:error-in-law:1}-\eqref{ineq:error-in-law:2}, we first consider the following cylindrical Wiener process
\begin{equation} \label{form:w.tilde}
\d\wtilde(t)=\beta(t)\d t+\d w(t),
\end{equation}
where
\begin{equation} \label{form:beta(t)}
\beta(t)= K_2\alpha_{\nbar} Q^{-1}P_{\nbar}(u(t)-\utilde(t)),
\end{equation}
and $u(t)$ and $\utilde(t)$ are the first components of $\Phi_\x(t)$ and $\Phitilde_{\x,\y}(t)$, respectively. Since $Q$ is invertible on $H_{\nbar}=\text{span}\{e_1,\dots,e_{\nbar}\}$ by virtue of condition~\eqref{cond:Q:ergodicity}, we note that
\begin{align*}
\|\beta(t)\|^2_H=\|K_2\alpha_{\nbar} Q^{-1}P_{\nbar}(u(t)-\utilde(t))\|_H^2&\le (K_2\alpha_{\nbar}a_Q^{-1})^2\|P_{\nbar}(u(t)-\utilde(t))\|_H^2\\
&\le (K_2\alpha_{\nbar}a_Q^{-1})^2 e^{-2\zeta t}\|\x-\y\|^2_\H.
\end{align*}
In the last estimate above, we employed~\eqref{ineq:dissipative-bound} with $\zeta$ given by~\eqref{form:zeta}. As a consequence,
\begin{align} \label{ineq:int.beta}
\E\int_0^\infty\close \|\beta(r)\|^2_H\d r
& =\E \int_0^\infty\close\|K_2\alpha_{\nbar} Q^{-1}P_{\nbar}(u(r)-\utilde(r))\|^2_H\d r \le \zeta_1^2\|\x-\y\|^2_\H,
\end{align}
where
\begin{equation} \label{form:zeta_1}
\zeta_1=\frac{K_2\alpha_{\nbar}}{a_Q\sqrt{2\zeta}}.
\end{equation}
In light of \cite[Inequality (A.1) and Theorem A.2]{butkovsky2020generalized} together with~\eqref{ineq:int.beta}, it holds that
\begin{align} \label{ineq:error-in-law:1:w}
\W_{\TV}(\Law(\wtilde_{[0,t]}),\Law(w_{[0,t]}))&\le\tfrac{1}{2}\Big(\E\int_0^\infty\close \|\beta(r)\|^2_H\d r\Big)^{1/2} \le \tfrac{1}{2}\zeta_1\|\x-\y\|_\H.
\end{align}
On the other hand, we invoke \cite[Inequality (A.2)]{butkovsky2020generalized} to see that for all $\x,\y\in B_R$
\begin{align} 
\W_{\TV}(\Law(\wtilde_{[0,t]}),\Law(w_{[0,t]}))&\le 1-\tfrac{1}{2}e^{-\frac{1}{2}\E\int_0^\infty \|\beta(r)\|^2_H\d r}\notag \\
&\le 1-\tfrac{1}{2}e^{-\frac{1}{2}\zeta_1^2\|\x-\y\|^2_\H}\notag\\
&\le 1-\tfrac{1}{2}e^{-2\zeta_1^2R^2}.\label{ineq:error-in-law:2:w}
\end{align}
Now, observe that~\eqref{ineq:error-in-law:1:w}-\eqref{ineq:error-in-law:2:w} imply~\eqref{ineq:error-in-law:1}-\eqref{ineq:error-in-law:2} if we can show that
\begin{equation} \label{ineq:Law.P_t<Law.w}
\W_{\TV}(\Law(\Phitilde_{\x,\y}(t)),P_t(\y,\cdot))\le\W_{\TV}(\Law(\wtilde_{[0,t]}),\Law(w_{[0,t]})).
\end{equation} 
To see this, consider any coupling $(\wtilde^1,w^1)$ for $(\wtilde,w)$ and denote by $\Xtil_{\x,\y}(t)$, $X_{\y}(t)$ respectively the solutions of~\eqref{eqn:react-diff:K:u.tilde} and~\eqref{eqn:react-diff:K} associated with $\wtilde^1$ and $w^1$. It is clear that $(\Xtil_{\x,\y}(t),X_{\y}(t))$ is a coupling for $(\Phitilde_{\x,\y}(t),\Phi_\y(t))$. We note that by the uniqueness of weak solution, 
\begin{align*}
\{\wtilde^1(r)=w^1(r),\, r\in[0,t]\}\subseteq\{\Xtil_{\x,\y}(t)=X_{\y}(t)\}.
\end{align*}
It follows that if 
\begin{align*}
\boldsymbol{1}\{\wtilde^1_{[0,t]}\neq w^1_{[0,t]}\}=0,
\end{align*}
then
\begin{align*}
\boldsymbol{1}\{\Xtil_{\x,\y}(t)\neq X_{\y}(t)\}=0.
\end{align*}
In particular, 
\begin{align*}
\boldsymbol{1}\{\wtilde^1[0,t]\neq w^1_{[0,t]}\}\ge \boldsymbol{1}\{\Xtil_{\x,\y}(t)\neq X_{\y}(t)\}.
\end{align*}
By the dual formula~\eqref{form:W_d:dual-Kantorovich}, we establish~\eqref{ineq:Law.P_t<Law.w}, thereby concluding the proof.
\end{proof}

Lastly, we provide the proof of Lemma~\ref{lem:W_(d_N)<W_TV}, which will finally conclude the proof of Lemma~\ref{lem:contracting-d-small}.
\begin{proof}[Proof of Lemma~\ref{lem:W_(d_N)<W_TV}]
Let $(X,Y)$ be any bivariate random variable such that $X\sim\mu_1$ and $Y\sim\mu_2$. Recalling $d_N(X,Y)=N\|X-Y\|_\H\mi 1$ as in~\eqref{form:d_N}, by the formula~\eqref{form:W_d:dual-Kantorovich},
\begin{align*}
\W_{d_N}(\mu_1,\mu_2)\le \E\, d_N(X,Y)& =\E\big[d_N(X,Y)\mathbf{1}\{X\neq Y\}\big]+\E\big[d_N(X,Y)\mathbf{1}\{X= Y\}\big]\\
& =\E\big[d_N(X,Y)\mathbf{1}\{X\neq Y\}\big]\\
&\le \E\big[\mathbf{1}\{X\neq Y\}\big].
\end{align*}
Since the last implication above holds for any such pair $(X,Y)$, we invoke~\eqref{form:W_d:dual-Kantorovich} again to deduce
\begin{align*}
\W_{d_N}(\mu_1,\mu_2) \le \W_{\TV}(\mu_1,\mu_2),
\end{align*}
thereby finishing the proof.
\end{proof}

\section*{Acknowledgment}
The author thanks Nathan Glatt-Holtz and Vincent Martinez for fruitful discussions on the topic of this paper. The author also would like to thank the anonymous reviewer for their valuable comments and suggestions.

\appendix

\section{Well-posedness of \eqref{eqn:react-diff:K}} \label{sec:well-posed}

In this section, we discuss Proposition~\ref{prop:well-posed} whose proof relies on the construction of the weak solutions for~\eqref{eqn:react-diff:K}. We start with the Galerkin finite-dimensional approximation.
\subsection{Finite-dimensional approximation}
Recalling the projection $P_n$ onto the first $n$ wavenumbers as in~\eqref{form:P_n.u}, we set
\begin{align*}
u^n(t)=\sum_{k=1}^n \la u^n(t),e_k\ra_He_k,\qquad \text{and}\quad \eta^n(t;s)= \sum_{k=1}^n \la \eta^n(t;s), e_k\ra_H e_k.
\end{align*}
We then consider the pair $(u^n(t),\eta^n(t))$ solving the following finite-dimensional system
\begin{equation} \label{eqn:react-diff:K:Galerkin}
\begin{aligned}
\d u^n(t)&=-A u^n(t)\d t +\int_{0}^{\infty}\close K(s) A\eta^n(t;s)\d s\d t+P_n\f(u^n(t))\d t+P_nQ\d w(t),\\
\d \eta^n(t)&= -\partial_s\eta^n(t)\d t, \quad u^n(0)=P_n u_0\in H,\eta^n(0;\cdot)=P_n\eta_0(\cdot)\in M,\, \eta^n(t;0)=u^n(t).
\end{aligned}
\end{equation}
By Ito's formula, we have
\begin{align*}
\tfrac{1}{2}\d \big( \|u^n(t),\eta^n(t)\|^2_{\H}\big)&=-\|\grad u^n(t)\|^2_H\d t+ \int_0^\infty \close K(s)\la \grad \eta^n(t;s),\grad u^n(t)\ra_H\d s\d t\\
&\qquad+\la P_n\f( u^n(t) ),u^n(t)\ra_H\d t+\la u^n(t),P_nQ\d w(t)\ra_H\d t+\tfrac{1}{2}\sum_{k=1}^n \lambda_k^2\d t\\
&\qquad-\tfrac{1}{2}\int_0^\infty \close\rho(s) \partial_s\|\grad\eta^n(t;s)\|^2_H\d s\d t . 
\end{align*}
Similarly to the a priori bounds in Section~\ref{sec:moment-bound:solution}, e.g., the proof of Lemma~\ref{lem:moment-bound:H}, we proceed to estimate the above right-hand side as follows:

In view of~\eqref{ineq:u+int.K<eta,u>:H1}, the drift terms can be bounded by
\begin{align*}
-\|\grad u^n(t)\|^2_H\d t&+ \int_0^\infty \close K(s)\la \grad \eta^n(t;s),\grad u^n(t)\ra_H\d s\d t-\tfrac{1}{2}\int_0^\infty\close \rho(s) \partial_s\|\grad\eta^n(t;s)\|^2_H\d s\d t\\
&\le -K_1\|\grad u^n(t)\|^2_H\d t-\tfrac{\varepsilon_1\delta}{2(1+\varepsilon_1)}\|\eta^n(t)\|^2_\M\d t,
\end{align*}
where we recall $\varepsilon_1$ and $K_1=1-(1+\tfrac{\varepsilon_1}{2})\|K\|_{L^1(\rbb^+)} $ as in~\eqref{form:eps_1.and.K_1}. 

Concerning the non-linear term, we invoke~\eqref{cond:phi:2} to see that
\begin{align*}
\la P_n\f( u^n(t) ),u^n(t)\ra_H&=\la \f( u^n(t) ),P_nu^n(t)\ra_H= \la \f( u^n(t) ),u^n(t)\ra_H\le -a_2\|u^n(t)\|^{p+1}_{L^{p+1}}+a_3|\domain|,
\end{align*}
where $p$ is the exponent constant from Assumption~\ref{cond:phi:well-possed}.

Using Burkholder's inequality, the Martingale term can be bounded in expectation by
\begin{align*}
\E\sup_{0\le r\le t}\Big|\int_0^r\la u^n(\ell),P_nQ\d w(\ell)\ra_H\Big|^2\le \|Q\|^2_{L(H)}\E\int_0^t \|u^n(r)\|^2_H\d r.
\end{align*}

Altogether, we arrive at the following estimate
\begin{equation} \label{ineq:E|u^n,eta^n|}
\begin{aligned}
\E\|(u^n(t),\eta^n(t))\|^2_\H&+\int K_1\|\grad u^n(r)\|^2_H+\tfrac{\varepsilon_1}{2(1+\varepsilon_1)}\delta\E\|\eta^n(r)\|^2_\M+a_2\E\|u^n(r)\|^{p+1}_{L^{p+1}}\d r\\
&\le \|(u_0,\eta_0)\|^2_\H+ \Big(\tfrac{1}{2}\sum_{k\ge 1}\lambda_k^2+a_3|\domain|\Big)t.
\end{aligned}
\end{equation}
In particular, this implies the existence and uniqueness of the strong solution $(u^n(\cdot),\eta^n(\cdot))$ for~\eqref{eqn:react-diff:K:Galerkin} \cite{khasminskii2011stochastic,meyn2012markov,oksendal2003stochastic}. Moreover, combining Burkholder's and Gronwall's inequalities, we deduce the following bound in sup norm
and\begin{align*}
\E\sup_{0\le r\le t}\|(u^n(r),\eta^n(r))\|^2_\H\le \|(u_0,\eta_0)\|^2_\H e^{ct},
\end{align*}
for some positive constant $c$ independent of $t,\,n$ and initial condition $(u_0,\eta_0)$. Also, setting $q=(p+1)/p$,~\eqref{cond:phi:1} combined with~\eqref{ineq:E|u^n,eta^n|} implies that
\begin{align} \label{ineq:phi(u^n)}
\E\int_0^t\| \f(u^n(r))\|^q_{L^q}\d r\le c\,\E\int_0^t 1+\|u^n(r)\|^{p+1}\d r\le c(u_0,\eta_0,t). 
\end{align}
Furthermore, since $\eta^n$ solves the transport equation in $\rbb^n$
\begin{align*}
\partial\eta^n(t;s)=-\partial_s\eta^n(t;s),\quad \eta^n(t;0)=u^n(t),\quad \eta^n(0;s)=P^n\eta_0(s),
\end{align*}
$\eta^n$ admits the following representation \cite{bonaccorsi2012asymptotic}
\begin{equation} \label{form:eta^n}
\eta^n(t;s)=u^n(t-s)\boldsymbol{1}\{s\le t\}+P_n\eta_0(s-t)\boldsymbol{1}\{s> t\}.
\end{equation}

\subsection{Passage to the limit} As a consequence of the preceding subsection, we deduce the following limits (up to a subsequence)
\begin{align*}
u^n &\rightharpoonup^* u \text{ in } L^2(\Omega;L^\infty(0,T;H) ),\\
u^n &\rightharpoonup u \text{ in } L^2(\Omega; L^2(0,T;H^1 )),\\
u^n &\rightharpoonup u\text{ in } L^{p+1}(\Omega; L^{p+1}(0,T;L^{p+1})),\\
\f(u^n)&\rightharpoonup \chi  \text{ in }  L^{q}(\Omega; L^{q}(0,T;L^{q})),\\
\eta^n &\rightharpoonup^* \eta \text{ in } L^2(\Omega;L^\infty(0,T;\M)),\\
\eta^n &\rightharpoonup \eta \text{ in } L^2(\Omega; L^2(0,T;\M )).
\end{align*}
Furthermore, (see \cite[pg. 224]{robinson2001infinite})
 $$P_n\f(u^n)\rightharpoonup \chi \text{ in } L^{q}(\Omega; L^{q}(0,T;L^{q})).$$
Next, we proceed to prove that a.s. $\eta$ satisfies~\eqref{eqn:weak-solution:eta}, i.e., 
\begin{align} \label{form:eta:2} 
\eta(t;s)=u(t-s)\boldsymbol{1}\{s\le t\}+\eta_0(s-t)\boldsymbol{1}\{s> t\}.
\end{align}
To see this, consider any arbitrary $\etahat\in \M$. We first note that 
\begin{align*}
\int_0^r \rho(s)\la \grad u^n(r-s),\grad\etahat(s)\ra_H\d s =  \int_0^r \la \grad u^n(r-s),\grad(\rho(s)\etahat(s))\ra_H\d s,
\end{align*}
which converges to
\begin{align*}
\int_0^r \la \grad u(r-s),\grad(\rho(s)\etahat(s))\ra_H\d s
&=\int_0^r \rho(s)\la \grad u(r-s),\grad\etahat(s)\ra_H\d s,
\end{align*}
as $n$ tends to infinity, since $u^n \rightharpoonup u$ in $ L^2(0,T;H^1 )$. Also, for each $r\in[0,t]$, Cauchy-Schwarz inequality and the fact that $\rho$ is decreasing on $[0,\infty)$ yield the bound
\begin{align*}
\Big|\int_0^r \rho(s)\la \grad u^n(r-s),\grad\etahat(s)\ra_H\d s\Big|&\le \tfrac{1}{2}\rho(0)\int_0^t\|\grad u^n(s)\|^2_H\d s+\tfrac{1}{2}\int_0^\infty\close\rho(s)\|\grad \etahat(s)\|^2_H\d s\\
&= c+\tfrac{1}{2}\|\etahat\|^2_\M,
\end{align*}
which is a.s. integrable on $[0,t]$. The Dominated Convergence Theorem then implies a.s.
\begin{align} \label{lim:u^n->u}
\int_0^t \int_0^r \rho(s)\la \grad u^n(r-s),\grad\etahat(s)\ra_H\d s\d r\to\int_0^t \int_0^r \rho(s)\la \grad u(r-s),\grad\etahat(s)\ra_H\d s\d r,
\end{align}
as $n\to\infty$. On the other hand, since $P_n\eta_0$ converges to $\eta_0$ in $\M$, for all $r\in[0,t]$, it holds that
\begin{align*}
&\Big|\int_r^\infty\close  \rho(s)\la P_n\eta_0(s-r)-\eta_0(s-r),\etahat(s)\ra_{H^1}\d s\Big|^2\\
&\le \int_r^\infty\close \rho(s)\| P_n\eta_0(s-r)-\eta_0(s-r)\|^2_{H^1}\d s\int_0^\infty \close \rho(s)\|\etahat(s)\|^2_{H^1}\d s\\
&= \int_0^\infty\close \rho(s+r)\| P_n\eta_0(s)-\eta_0(s)\|^2_{H^1}\d s\, \|\etahat(r)\|^2_{\M}\\
&\le \| P_n\eta_0-\eta_0\|^2_{\M}\|\etahat\|^2_{\M}\to0,\quad n\to\infty.
\end{align*}
Using the Dominated Convergence Theorem again, we obtain a.s.
\begin{align} \label{lim:P_n.eta_0->eta_0}
\int_0^t \int_r^\infty\close  \rho(s)\la P_n\eta_0(s-r)-\eta_0(s-r),\etahat(s)\ra_{H^1}\d s\to 0,\quad n\to\infty.
\end{align}
We now combine~\eqref{lim:u^n->u} and~\eqref{lim:P_n.eta_0->eta_0} to deduce 
\begin{align*}
\eta^n(t)=u^n(t-s)\boldsymbol{1}\{s\le t\}+P_n\eta_0(s-t)\boldsymbol{1}\{s> t\}\rightharpoonup u(t-s)\boldsymbol{1}\{s\le t\}+\eta_0(s-t)\boldsymbol{1}\{s> t\},
\end{align*}
in $L^2(0,T;\M)$. This proves~\eqref{eqn:weak-solution:eta} by the uniqueness of weak limit. 

Next, we turn to establish~\eqref{eqn:weak-solution:eta:1}. Considering $\etatilde\in \M$ such that $\partial_s\etatilde\in\M$, we multiply both sides of $\eta^n$-equation in~\eqref{eqn:react-diff:K:Galerkin} with $\etatilde$ and perform integration by parts to obtain
\begin{align*}
\tfrac{\d}{\d t}\la \eta^n(t),\etatilde\ra_\M&=\la -\partial_s\eta^n(t),\etatilde\ra_\M\\
&= -\rho(s)\la \eta^n(t;s),\etatilde(s)\ra_{H^1}\Big|^\infty_{0}+\int_0^\infty\close  \la \eta^n(t;s),\partial_s(\rho(s)\etatilde(s))\ra_{H^1}\d s\\
&= \rho(0)\la \eta^n(t;0),\etatilde(0)\ra_{H^1}+\la \eta^n(t),\partial_s\etatilde\ra_\M+\int_0^\infty\close\rho'(s)\la \eta^n(r;s),\etatilde(s)\ra_{H^1}\d s\d r\\
&= \rho(0)\la u^n(t),\etatilde(0)\ra_{H^1}+\la \eta^n(t),\partial_s\etatilde\ra_\M+\int_0^\infty\close\rho'(s)\la \eta^n(r;s),\etatilde(s)\ra_{H^1}\d s\d r.
\end{align*}
In the last implication above, we employed the fact that $\eta^n(t;0)=u^n(t)$. Integrating the above equation with respect to time $t$ yields
\begin{align*}
\la \eta^n(t),\etatilde\ra_\M &=\la P_n\eta_0,\etatilde\ra_\M+\int_0^t\rho(0)\la u^n(r),\etatilde(0)\ra_{H^1}\d r + \int_0^t\la \eta^n(r),\partial_s\etatilde\ra_{\M}\d r\\
&\qquad\qquad +\int_0^t\int_0^\infty\close\rho'(s)\la \eta^n(r;s),\etatilde(s)\ra_{H^1}\d s\d r.
\end{align*}
By sending $n$ to infinity, we deduce the identity~\eqref{eqn:weak-solution:eta:1} provided that
\begin{align} \label{lim:eta^n->eta}
\int_0^t\int_0^\infty\close\rho'(s)\la \eta^n(r;s),\etatilde(s)\ra_{H^1}\d s\d r\to \int_0^t\int_0^\infty\close\rho'(s)\la \eta(r;s),\etatilde(s)\ra_{H^1}\d s\d r,\quad n\to\infty.
\end{align}
To see this, recall from~\eqref{ineq:K/rho<c} that $$\frac{K(s)}{\rho(s)}=\frac{|\rho'(s)|}{\rho(s)}<c,\quad s\ge 0.$$
Thus, $\frac{|\rho'|}{\rho}\etatilde\in\M$, whence
\begin{align*}
\int_0^t\int_0^\infty\close\rho'(s)\la \eta^n(r;s),\etatilde(s)\ra_{H^1}\d s\d r&= \int_0^t\int_0^\infty\close\rho(s)\la \eta^n(r;s),\tfrac{\rho'(s)}{\rho(s)}\etatilde(s)\ra_{H^1}\d s\\
&\to \int_0^t\int_0^\infty\close\rho(s)\la \eta(r;s),\tfrac{\rho'(s)}{\rho(s)}\etatilde(s)\ra_{H^1}\d s\d r,
\end{align*}
as $n\to\infty$. This proves~\eqref{lim:eta^n->eta}, which in turn implies~\eqref{eqn:weak-solution:eta:1}.

We are left to establish~\eqref{eqn:weak-solution:u}. Considering any $v\in H^1\cap L^{p+1}$, it holds that
\begin{equation} \label{eqn:<u^n,v>}
\begin{aligned}
\la u^n(t),v\ra_H&= \la P_nu_0,v\ra_H-\int_0^t \la u^n(r),v\ra_{H^1}+\int_0^t \int_0^\infty\close  K(s)\la \eta^n(r;s),v\ra_{H^1}\d s\d r\\
&\qquad+\int_0^t \la P^n\f(u^n(r)),v\ra_H\d r+\int_0^t \la v,P_nQ\d w(r)\ra_H.   
\end{aligned}
\end{equation}
We first claim that
\begin{align*}
\int_0^t \int_0^\infty \close K(s)\la \eta^n(r;s),v\ra_{H^1}\d s\d r\to \int_0^t \int_0^\infty\close  K(s)\la \eta(r;s),v\ra_{H^1}\d s\d r,\quad n\to\infty.
\end{align*}
To this end, recalling~\eqref{ineq:K/rho<c} again, we see that for all $s\ge 0$, $\frac{K(s)}{\rho(s)}<c$, implying
\begin{align*}
\int_0^\infty \close \rho(s)\cdot \frac{K(s)^2}{\rho(s)^2} \|v\|^2_{H^1}\d s\le c^2\|v\|^2_{H^1}\|\rho\|_{L^1(\rbb^+)}. 
\end{align*}
In other words, $\frac{K(\cdot)}{\rho(\cdot)}v \in \M$. It follows that as $n\to\infty$,
\begin{align*}
\int_0^t \int_0^\infty\close  K(s)\la \eta^n(r;s),v\ra_{H^1}\d s\d r&=\int_0^t \int_0^\infty\close  \rho(s)\la \eta^n(r;s),\tfrac{K(s)}{\rho(s)} v\ra_{H^1}\d s\d r\\
&\to \int_0^t \int_0^\infty\close  \rho(s)\la \eta(r;s),\tfrac{K(s)}{\rho(s)} v\ra_{H^1}\d s\d r\\
&= \int_0^t \int_0^\infty\close  K(s)\la \eta(r;s),v\ra_{H^1}\d s\d r.
\end{align*}
As a consequence, sending $n\to\infty$ in \eqref{eqn:<u^n,v>} yields 
\begin{align*}
\la u(t),v\ra_H&= \la u_0,v\ra_H-\int_0^t \la u(r),v\ra_{H^1}+\int_0^t \int_0^\infty\close  K(s)\la \eta(r;s),v\ra_{H^1}\d s\d r\\
&\qquad+\int_0^t \la \chi(r),v\ra_H\d r+\int_0^t \la v,Q\d w(r)\ra_H.   
\end{align*}
It therefore remains to establish that $\chi=\f(u)$. The argument follows along the lines of \cite[Theorem 8.4]{robinson2001infinite} tailored to our setting (see also \cite{caraballo2008pullback,glatt2008stochastic}). 

We first consider the pair $(P_n u(t),P_n\eta(t))$ and note that they obey the finite-dimensional system
\begin{equation} \label{eqn:react-diff:mu:P_N.u,P_N.eta}
\begin{aligned}
\d P_n u(t)&=-AP_n u(t)\d t-\int_0^\infty\close  K(s)AP_n \eta(t;s)\d s\d t+P_n \chi(t)\d t+P_n Q\d w(t),\\
\d P_n  \eta(t)&= -\partial_s P_n \eta(t)\d t,\\
 P_n  u(0)&=P_n  u_0,\quad P_n  \eta(0)=P_n \eta_0,\quad P_n\eta(t;0)=P_nu(t),\,t>0.
\end{aligned}
\end{equation} 
Setting $Z_n=P_n u-u^n,\,\zeta_n=P_n \eta-\eta^n$ and subtracting~\eqref{eqn:react-diff:K:Galerkin} from~\eqref{eqn:react-diff:mu:P_N.u,P_N.eta}, observe that
\begin{equation} \label{eqn:react-diff:mu:ubar,etabar}
\begin{aligned}
\tfrac{\d}{\d t} Z_n(t)&=-AZ_n(t)-\int_0^\infty \close K(s)A\zeta_n(t;s)\d s+P_n \chi(t)-P_n (\f(u^n(t))),\\
\tfrac{\d}{\d t} \zeta_n(t)&= -\partial_s \zeta_n(t),\qquad Z_n(0)=0,\quad \zeta_n(0)=0,\quad\zeta_n(t;0)=Z_n(t),\,t>0.
\end{aligned}
\end{equation} 
It follows that
\begin{align*}
\tfrac{1}{2}\cdot\tfrac{\d}{\d t} \|(Z_n(t),\zeta_n(t))\|^2_{\H}
&=-\|\grad Z_n(t)\|^2_H+ \int_0^\infty \close K(s)\la \grad \zeta_n(t;s),\grad Z_n(t)\ra_H\d s-\tfrac{1}{2}\int_0^\infty \close\rho(s) \partial_s\|\grad\zeta_n(t;s)\|^2_H\\
&\qquad+\la P_n \chi(t)-P_n (\f(u^n(t))),Z_n(t)\ra_H.
\end{align*} 
Similarly to~\eqref{ineq:u+int.K<eta,u>:H1}, we readily have the bound
\begin{align*}
-\|\grad Z_n(t)\|^2_H+& \int_0^\infty \close K(s)\la \grad \zeta_n(t;s),\grad Z_n(t)\ra_H\d s-\tfrac{1}{2}\int_0^\infty \close\rho(s) \partial_s\|\grad\zeta_(t;s)\|^2_H\\
&\le -K_1\|\grad Z_n(t)\|^2_H-\tfrac{\varepsilon_1\delta}{2(1+\varepsilon_1)}\|\zeta_n(t)\|^2_\M,
\end{align*}
where we recall $\varepsilon_1$ and $K_1=1-(1+\tfrac{\varepsilon_1}{2})\|K\|_{L^1(\rbb^+)} $ as in~\eqref{form:eps_1.and.K_1}. 
Concerning the non-linear term involving $\chi$, we write
\begin{align*}
&\la P_n \chi(t)-P_n (\f(u^n(t))),P_n  u(t)-u^n(t)\ra_H\\
& = \la \chi(t)-\f(u^n(t)),P_n  u(t)-u^n(t)\ra_H\\
& = \la \chi(t),P_n  u(t)-u^n(t)\ra_H+ \la \f(u(t))-\f(u^n(t)), u(t)-u^n(t)\ra_H\\
&\qquad +\la (I-P_n )\f(u^n(t)),u(t)\ra_H- \la \f(u(t)),u(t)-u^n(t)\ra_H\\
&=I_1+I_2+I_3-I_4.
\end{align*}
Concerning $I_1$, since both $u^n$ and $P_n u$ converge weakly to $u$ in $ L^{p+1}(\Omega; L^{p+1}(0,T;L^{p+1}))$, we obtain the limit
$$\E \int_0^tI_1(r)\d r\to 0,\text{ as }n\to \infty.$$ 
Also, using Holder's inequality,
\begin{align*}
\Big|\E \int_0^tI_1(r)\d r\Big| \le \Big(\E\int_0^T\close \|\chi(r)\|^{q}_{L^q}\d r \Big)^{\frac{1}{q}}\Big(\E\int_0^T \close\|P_n  u(r)-u^n(r)\|_{L^{p+1}}^{p+1}\d r \Big)^{\frac{1}{p+1}}\le C,
\end{align*}
for some positive constant $C=C(T)$. To bound $I_4$, we note that since $u\in L^{p+1}(\Omega; L^{p+1}(0,T;L^{p+1}))$, condition~\eqref{cond:phi:1} implies $\f(u)\in L^{q}(\Omega; L^{q}(0,T;L^{q}))$. Similarly to $I_1$, we have
$$\E\int_0^t I_4(r)\d r\to 0,\text{ as }n\to \infty,$$
and
\begin{align*}
\Big|\E \int_0^tI_4(r)\d r\Big| \le \Big(\E\int_0^T\close \|\f(u(r))\|^{q}_{L^q}\d r \Big)^{\frac{1}{q}}\Big(\E\int_0^T \close\|u(r)-u^n(r)\|_{L^{p+1}}^{p+1}\d r \Big)^{\frac{1}{p+1}}\le C.
\end{align*}
With regard to $I_3$, since $\f(u^n)$ is uniformly bounded in $L^{q}(0,T;L^{q})$, cf.~\eqref{ineq:phi(u^n)}, it holds that
\begin{align*}
(I-P_n )\f(u^n)\rightharpoonup 0, \text{ in } L^{q}(0,T;L^{q}).
\end{align*}
In particular, we have
\begin{align*}
\E\int_0^t I_3(r)\d r\to 0,\text{ as }n\to \infty,
\end{align*}
and
\begin{align*}
\Big|\E \int_0^tI_3(r)\d r\Big| \le \Big(\E\int_0^T\close \|(I-P_n)\f(u^n(r))\|^{q}_{L^q}\d r \Big)^{\frac{1}{q}}\Big(\E\int_0^T \close\|u(r)\|_{L^{p+1}}^{p+1}\d r \Big)^{\frac{1}{p+1}}\le C.
\end{align*}
Concerning $I_2$, we invoke condition~\eqref{cond:phi:3} again to infer
\begin{align*}
I_2=\la \f(u(t))-\f(u^n(t)), u(t)-u^n(t)\ra_H&\le a_\f\|u(t)-u^n(t)\|^2_H\\
&=a_\f \|P_n  u(t)-u^n(t)\|^2_H+a_\f\|(I-P_n )u(t)\|^2_H.
\end{align*}
Setting
\begin{align*}
G_n(t):=\E\int_0^t I_1(r)+I_2(r)-I_4(r)+a_\f\|(I-P_n )u(r)\|^2_H\d r,
\end{align*}
by Gronwall's inequality, we infer the bound a.e. in $[0,T]$
\begin{align*}
\E\|(Z_n(t),\zeta_n(t))\|^2_\H\le |G_n(t)|+C\int_0^t |G_n(r)|\d r,
\end{align*}
for some positive constant $C=C(T)$. We observe that $G_n(t)$ converges to zero and that 
\begin{align*}
|G_n(t)|\le \Big|\E \int_0^tI_1(r)\d r\Big|+\Big|\E \int_0^tI_3(r)\d r\Big|+\Big|\E \int_0^t I_4(r)\d r\Big|+a_\f\E|\int_0^T\close\|u(r)\|^2_H\d r\le C.
\end{align*}
By the Dominated Convergence Theorem, it holds that
\begin{align*}
\int_0^t |G_n(r)|\d r\to 0,\text{ as }n\to \infty,
\end{align*}
whence for a.e. $t\in[0,T]$
\begin{align*}
\E\|(Z_n(t),\zeta_n(t))\|^2_\H\to 0,\text{ as }n\to \infty.
\end{align*}
We invoke the Dominated Convergence Theorem again to deduce
\begin{align*}
\E\int_0^T\close\|(Z_n(r),\zeta_n(r))\|^2_\H\d r\to 0,\text{ as }n\to \infty.
\end{align*}
In particular, this implies that (up to a subsequence)
\begin{align*}
u^n \to u \text{  a.s. in  }C(0,T;H),
\end{align*}
and thus, (up to a further subsequence) $u^n$ converges to $u$ a.e. $(x,t)\in \domain\times[0,T]$. It follows that $\f(u^n)$ converges to $\f(u)$ a.e. $(x,t)\in \domain\times [0,T]$ since $\f$ is continuous. In view of~\cite[Lemma 8.3]{robinson2001infinite}, we obtain
$$\f(u^n )\rightharpoonup \f(u)  \text{ in }   L^{q}(0,T;L^{q}),$$
whence a.s., $\chi=\f(u)$ a.e. $(x,t)\in \domain\times [0,T]$. This concludes the proof of Proposition~\ref{prop:well-posed}.

\bibliographystyle{abbrv}
{\footnotesize\bibliography{react-diff}}

\end{document}